\definecolor{cite}{rgb}{0.50,0.00,1.00}
\definecolor{url}{rgb}{0.00,0.50,0.75}
\definecolor{link}{rgb}{0.00,0.00,0.50}
\numberwithin{equation}{section}
\theoremstyle{plain}
\newtheorem{proposition}{Proposition}[section]
\newtheorem{corollary}[proposition]{Corollary}
\newtheorem{lemma}[proposition]{Lemma}
\newtheorem{theorem}[proposition]{Theorem}
\theoremstyle{definition}
\newtheorem{definition}[proposition]{Definition}
\newtheorem{notation}[proposition]{Notation}
\newtheorem{assumption}[proposition]{Assumption}
\newtheorem{construction}[proposition]{Construction}
\theoremstyle{remark}
\newtheorem{example}[proposition]{Example}
\newtheorem{remark}[proposition]{Remark}
\newcommand{\cC}{\mathcal{C}}
\newcommand{\cD}{\mathcal{D}}
\newcommand{\cE}{\mathcal{E}}
\newcommand{\cF}{\mathcal{F}}
\newcommand{\cG}{\mathcal{G}}
\newcommand{\cH}{\mathcal{H}}
\newcommand{\cI}{\mathcal{I}}
\newcommand{\cK}{\mathcal{K}}
\newcommand{\cM}{\mathcal{M}}
\newcommand{\cN}{\mathcal{N}}
\newcommand{\cQ}{\mathcal{Q}}
\newcommand{\cR}{\mathcal{R}}
\newcommand{\cT}{\mathcal{T}}
\newcommand{\cU}{\mathcal{U}}
\newcommand{\id}{\mathrm{id}}
\newcommand{\Komp}{\c{K}\r{pt}}
\newcommand{\Kpt}{\mathrm{Kpt}}
\newcommand{\Hom}{\r{Hom}}
\newcommand{\Fun}{\mathrm{Fun}}
\newcommand{\Map}{\mathrm{Map}}
\newcommand{\Ob}{\mathrm{Ob}}
\newcommand{\Ar}{\mathrm{Ar}}
\newcommand{\res}{\mathbin{|}}
\newcommand{\rres}{\mathrm{res}}
\newcommand{\Set}{\c{S}\r{et}}
\newcommand{\cart}{\mathrm{cart}}
\newcommand{\diag}{\mathrm{diag}}
\newcommand{\op}{\mathrm{op}}
\newcommand{\sh}{\H}
\newcommand{\xto}[2][]{\xrightarrow[#1]{#2}}
\renewcommand{\b}[1]{\mathbf{#1}}
\renewcommand{\c}[1]{\mathcal{#1}}
\renewcommand{\r}[1]{\mathrm{#1}}
\renewcommand{\H}{\mathrm{h}}
\newcommand{\N}{\r{N}}
\newcommand{\Crt}{\r{Cart}}
\newcommand{\Cart}{\cC\r{art}}
\newcommand{\corr}{\r{corr}}
\newcommand{\Kart}{\c{K}\r{art}}
\newcommand{\del}{\mathbf{\Delta}}
\newcommand{\Sset}[1][]{\Set_{#1\Delta}}
\newcommand{\Mset}{\Set_{\Delta}^+}
\newcommand{\Sch}{\c{S}\r{ch}}
\newcommand{\Cpt}{\b{Cpt}}
\newcommand{\CCpt}{\cC\r{pt}}
\newcommand{\RCpt}{\r{Cpt}}
\newcommand{\cat}{\cC\r{at}_1}
\newcommand{\Cat}{\cC\r{at}_{\infty}}
\newcommand{\rD}{\r{D}}
\newcommand{\RES}{\r{res}}
\newcommand{\RKE}{\r{RKE}}
\newcommand{\sfU}{\mathsf{U}}
\newcommand{\sfV}{\mathsf{V}}
\newcommand{\sfW}{\mathsf{W}}
\begin{document}

\title{Gluing restricted nerves of $\infty$-categories}

\author{Yifeng Liu}
\address{Department of Mathematics, Massachusetts Institute of Technology, Cambridge, MA 02139, United States}
\email{liuyf@math.mit.edu}

\author{Weizhe Zheng}
\address{Morningside Center of Mathematics, Academy of Mathematics and Systems Science, Chinese Academy of Sciences, Beijing 100190, China}
\email{wzheng@math.ac.cn}

\date{June 4, 2015}
\keywords{$\infty$-categories; multisimplicial sets} \subjclass[2010]{18D05
(primary), 18G30, 14F05 (secondary)}

\begin{abstract}
In this article, we develop a general technique for gluing subcategories of
$\infty$-categories. We obtain categorical equivalences between simplicial
sets associated to certain multisimplicial sets. Such equivalences can be
used to construct functors in different contexts. One of our results
generalizes Deligne's gluing theory developed in the construction of the
extraordinary pushforward operation in \'etale cohomology of schemes. Our
results are applied in subsequent articles \cites{LZ1,LZ2} to construct
Grothendieck's six operations in \'etale cohomology of Artin stacks.
\end{abstract}

\maketitle

\tableofcontents

\section*{Introduction}

The extraordinary pushforward, one of Grothendieck's six operations, in
\'etale cohomology of schemes was constructed in SGA 4 XVII \cite{SGA4XVII}.
Let $\Sch'$ be the category of quasi-compact and quasi-separated schemes,
with morphisms being separated of finite type, and let $\Lambda$ be a fixed
torsion ring. For a morphism $f\colon Y\to X$ in $\Sch'$, the extraordinary
pushforward by $f$ is a functor
\begin{align*}
f_!\colon \rD(Y,\Lambda)\to\rD(X,\Lambda),
\end{align*}
between unbounded derived categories of $\Lambda$-modules in the \'{e}tale
topoi. The functoriality of this operation is encoded by a pseudofunctor
\begin{align*}
F\colon \Sch'\to\cat
\end{align*}
sending a scheme $X$ in $\Sch'$ to $\rD(X,\Lambda)$ and a morphism $f\colon
Y\to X$ in $\Sch'$ to the functor $f_!$. Here $\cat$ denotes the
$(2,1)$-category of categories\footnote{A $(2,1)$-category is a $2$-category
in which all $2$-cells are invertible.}. There are obvious candidates for the
restrictions $F_P$ and $F_J$ of $F$ to the subcategories $\Sch'_P$ and
$\Sch'_J$ of $\Sch'$ spanned respectively by proper morphisms and open
immersions. The construction of $F$ thus amounts to gluing the two
pseudofunctors. For this, Deligne developed a general theory for gluing two
pseudofunctors of target $\cat$ \cite{SGA4XVII}*{Section 3}. Deligne's gluing
theory, together with its variants (\cite{Ayoub}*{Section 1.3}, \cite{Zh1}),
have found several other applications (\cite{Ayoub}, \cite{CD}, \cite{Zh}).

In the present article, we study the problem of gluing in higher categories.
The technique developed here can be used to construct Grothendieck's six
operations in different contexts (\cites{LZ1,LZ2}, \cite{Robalo}). In
\cites{LZ1,LZ2}, we use the gluing technique to construct higher categorical
six operations in \'etale cohomology of higher Artin stacks and prove the
base change theorem. Even for $1$-Artin stacks and ordinary six operations,
this theorem was previously only established on the level of sheaves (and
subject to other restrictions) \cites{LO1,LO2}. Our construction of the six
operations makes essential use of higher categorical descent, so that even
if one is only interested in the six operations and base change in ordinary
derived categories, the enhanced version is still an indispensable step of
the construction. As a starting point for the descent procedure, we need an
enhancement of the pseudofunctor $F$ above. In the language of
$\infty$-categories \cite{Lu1}, such an enhancement is a functor
\begin{align*}
F^{\infty}\colon \N(\Sch')\to\Cat
\end{align*}
between $\infty$-categories, where $\N(\Sch')$ is the nerve of $\Sch'$ and
$\Cat$ denotes the $\infty$-category of $\infty$-categories. For every
scheme $X$ in $\Sch'$, $F^\infty(X)$ is an $\infty$-category
$\cD(X,\Lambda)$, whose homotopy category is equivalent to $\rD(X,\Lambda)$.
For every morphism $f\colon Y\to X$ in $\Sch'$, the image $F^{\infty}(f)$ is
a functor
\[f_!^{\infty}\colon \cD(Y,\Lambda)\to\cD(X,\Lambda)\]
such that the induced functor $\H f_!^{\infty}$ between homotopy categories
is equivalent to the classical $f_!$.

One major difficulty of the construction of $F^\infty$ is the need to keep
track of coherence of all levels. By Nagata compactification \cite{Conrad},
every morphism $f$ in $\Sch'$ can be factorized as $p\circ j$, where $j$ is
an open immersion and $p$ is proper. One can then define $F(f)$ as
$F_P(p)\circ F_J(j)$. The issue is that such a factorization is not
canonical, so that one needs to include coherence with composition as part
of the data. Since the target of $F$ is a $(2,1)$-category, in Deligne's
theory coherence up to the level of $2$-cells suffices. The target of
$F^{\infty}$ being an $\infty$-category, we need to consider coherence of
\emph{all} levels.

Another complication is the need to deal with more than two subcategories.
This need is already apparent in \cite{Zh}. We will give another illustration
in the proof of Corollary \ref{0co:scheme2} below.

To handle these complications, we propose the following general framework.
Let $\cC$ be an (ordinary) category and let $k\geq 2$ be an integer. Let
$\cE_1,\dots,\cE_k\subseteq\Ar(\cC)$ be $k$ sets of arrows of $\cC$, each
containing every identity morphism in $\cC$. In addition to the nerve
$\N(\cC)$ of $\cC$, we define another simplicial set, which we denote by
$\delta^*_k\N(\cC)^\cart_{\cE_1,\dots,\cE_k}$. Its $n$-simplices are
functors $[n]^k\to\cC$ such that the image of a morphism in the $i$-th
direction is in $\cE_i$ for $1\leq i\leq k$, and the image of every square
in direction $(i,j)$ is a Cartesian square (also called pullback square) for
$1\le i<j\le k$. For example, when $k=2$, the $n$-simplices of
$\delta^*_2\N(\cC)^\cart_{\cE_1,\cE_2}$ correspond to diagrams
\begin{align}\label{0eq:simplex}
\xymatrix{
c_{00} \ar[r]\ar[d] & c_{01} \ar[r]\ar[d] & \dotsb \ar[r] & c_{0n}\ar[d] \\
c_{10} \ar[r]\ar[d] & c_{11} \ar[r]\ar[d] & \dotsb \ar[r] & c_{1n}\ar[d] \\
\vdots \ar[d]       & \vdots \ar[d]       &               & \vdots\ar[d] \\
c_{n0} \ar[r]       & c_{n1} \ar[r]       & \dotsb \ar[r] & c_{nn}
}
\end{align}
where vertical (resp.\ horizontal) arrows are in $\cE_1$ (resp.\ $\cE_2$) and
all squares are Cartesian. The face and degeneracy maps are defined in the
obvious way. Note that $\delta^*_k\N(\cC)^\cart_{\cE_1,\dots,\cE_k}$ is
\emph{seldom} an $\infty$-category. It is the simplicial set associated to a
$k$-simplicial set $\N(\cC)^\cart_{\cE_1,\dots,\cE_k}$. The latter is a
special case of what we call the \emph{restricted multisimplicial nerve} of
an ($\infty$-)category with extra data (Definition
\ref{3de:restricted_nerve}).

Let $\cE_0\subseteq \Ar(\cC)$ be a set of arrows stable under
composition and containing $\cE_1$ and $\cE_2$. Then there is a
natural map
\begin{equation}\label{0eq:equivalence}
g\colon \delta^*_k\N(\cC)^\cart_{\cE_1,\cE_2,\cE_3,\dots,\cE_k}\to
\delta^*_{k-1}\N(\cC)^\cart_{\cE_0,\cE_3,\dots,\cE_k}
\end{equation}
of simplicial sets, sending an $n$-simplex of the source corresponding to a
functor $[n]^k\to\cC$, to its partial diagonal
\[[n]^{k-1}=[n]\times[n]^{k-2}\xrightarrow{\r{diag}\times\r{id}_{[n]^{k-2}}}[n]^k=[n]^2\times[n]^{k-2}\to\cC,\] which
is an $n$-simplex of the target.

We say that a subset $\cE\subseteq\Ar(\cC)$ is \emph{admissible} (Definition
\ref{3de:admissible_edge}) if $\cE$ contains every identity morphism, $\cE$
is stable under pullback, and for every pair of composable morphisms
$p\in\cE$ and $q$ in $\cC$, $p\circ q$ is in $\cE$ if and only if $q\in\cE$.
One main result of the article is the following.

\begin{theorem}[Special case of Theorem \ref{5th:cartesian_descent}]\label{0th:main}
Let $\cC$ be a category admitting pullbacks and let
$\cE_0,\cE_1,\dots,\cE_k\subseteq \Ar(\cC)$, $k\ge 2$, be sets of morphisms
containing every identity morphism and satisfying the following conditions:
\begin{enumerate}
  \item $\cE_1,\cE_2\subseteq \cE_0$; $\cE_0$ is stable under composition
      and $\cE_1,\cE_2$ are admissible.
  \item For every morphism $f$ in $\cE_0$, there exist $p\in\cE_1$ and
      $q\in\cE_2$ such that $f=p\circ q$.
  \item For every $3\le i \le k$, $\cE_i$ is stable under pullback by
      $\cE_1$.
\end{enumerate}
Then the natural map \eqref{0eq:equivalence}
\[g\colon
\delta^*_k\N(\cC)^\cart_{\cE_1,\cE_2,\cE_3,\dots,\cE_k}\to
\delta^*_{k-1}\N(\cC)^\cart_{\cE_0,\cE_3,\dots,\cE_k}\] is a
\emph{categorical equivalence} (Definition
\ref{1de:categorical_equivalence}).
\end{theorem}

Taking $k=2$ and $\cE_0=\Ar(\cC)$ we obtain the following.

\begin{corollary}\label{0co:main}
Let $\cC$ be a category admitting pullbacks. Let $\cE_1,\cE_2\subseteq
\Ar(\cC)$ be admissible subsets. Assume that for every morphism $f$ of
$\cC$, there exist $p\in\cE_1$ and $q\in\cE_2$ such that $f=p\circ q$. Then
the natural map
\[g\colon \delta^*_2\N(\cC)^\cart_{\cE_1,\cE_2}\to \N(\cC)\]
is a categorical equivalence.
\end{corollary}

In the situation of Corollary \ref{0co:main}, for every $\infty$-category
$\cD$, the functor
\[\Fun(\N(\cC),\cD)\to \Fun(\delta^*_2\N(\cC)^\cart_{\cE_1,\cE_2},\cD)\]
is an equivalence of $\infty$-categories. We remark that such equivalences
can be used to construct functors in many different contexts. For instance,
we can take $\cD$ to be $\N(\cat)$ \footnote{Here $\N(\cat)$ denotes the
simplicial nerve \cite{Lu1}*{Definition 1.1.5.5} of $\cat$, the latter
regarded as a simplicial category.}, $\Cat$, or the $\infty$-category of
differential graded categories.

In the above discussion, we may replace $\N(\cC)$ by an $\infty$-category
$\cC$ (not necessarily the nerve of an ordinary category), and define the
simplicial set $\delta_k^*\cC_{\cE_1,\dots,\cE_k}^\cart$. Moreover, in
\cite{LZ1}, we need to encode information such as the Base Change
isomorphism, which involves both pullback and (extraordinary) pushforward.
To this end, we will define in Section \ref{3ss}, for every subset
$L\subseteq \{1,\dots,k\}$, a variant
$\delta_{k,L}^*\cC^\cart_{\cE_1,\dots,\cE_k}$ of
$\delta_k^*\cC_{\cE_1,\dots,\cE_k}^\cart$ by ``taking the opposite'' in the
directions in $L$. For $L\subseteq \{3,\dots, k\}$, the theorem remains
valid modulo slight modifications. We refer the reader to Theorem
\ref{5th:cartesian_descent} for a precise statement. Let us mention in
passing that there exists  a canonical categorical equivalence from the
simplicial set $\delta^*_{2,\{2\}}\cC^\cart_{\cE_1,\cE_2}$ to the
$\infty$-category of correspondences introduced in \cite{Gait} (Example
\ref{4ex:corr}).

Next we turn to applications to categories of schemes.

\begin{corollary}\label{0co:scheme1}
Let $P\subseteq \Ar(\Sch')$ be the subset of proper morphisms and let
$J\subseteq \Ar(\Sch')$ be the subset of open immersions. Then the natural
map
\[\delta_2^*\N(\Sch')^\cart_{P,J}\to\N(\Sch')\]
is a categorical equivalence.
\end{corollary}

\begin{proof}
This follows immediately from Corollary \ref{0co:main} applied to
$\cC=\Sch'$, $\cE_1=P$, $\cE_2=J$.
\end{proof}

As many important moduli stacks are not quasi-compact, in \cite{LZ1} we work
with Artin stacks that are not necessarily quasi-compact. Accordingly, we
need the following variant of Corollary \ref{0co:scheme1}.

\begin{corollary}\label{0co:scheme2}
Let $\Sch''$ be the category of disjoint unions of quasi-compact and
quasi-separated schemes, with morphisms being separated and \emph{locally}
of finite type. Let $F=\Ar(\Sch'')$ be the set of morphisms of $\Sch''$. Let
$P\subseteq F$ be the subset of proper morphisms, and let $I\subseteq F$ be
the subset of local isomorphisms \cite{EGAIn}*{D\'efinition 4.4.2}. Then the
natural map
\[\delta_2^*\N(\Sch'')^\cart_{P,I}\to\N(\Sch'')\]
is a categorical equivalence.
\end{corollary}

Corollary \ref{0co:scheme2} still holds if one replaces $I$ by the subset
$E\subseteq F$ of \'{e}tale morphisms.

One might be tempted to apply Corollary \ref{0co:main} by taking $\cE_1=P$,
$\cE_2=I$. However, the assumption of Corollary \ref{0co:main} does not
hold. For example, we may take $f$ to be the structural morphism of the
disjoint union of varieties of unbounded dimensions over a field.

\begin{proof}[Proof of Corollary \ref{0co:scheme2}]
Put $\cC=\Sch''$. We introduce the following auxiliary sets of morphisms. Let
$F_{\r{ft}}\subseteq F$ be the set of separated morphisms of finite type, and
let $I_{\r{ft}}=I\cap F_{\r{ft}}$. Consider the following commutative diagram
\[\xymatrix{
\delta_3^*\N(\cC)^\cart_{P,I_{\r{ft}},I} \ar[r] \ar[d] &  \delta_2^*\N(\cC)^\cart_{F_{\r{ft}},I} \ar[d] \\
\delta_2^*\N(\cC)^\cart_{P,I} \ar[r]  & \N(\cC), }
\]
where the upper arrow is induced by ``composing morphisms in $P$ and
$I_{\r{ft}}$'', while the left arrow is induced by ``composing morphisms in
$I_{\r{ft}}$ and $I$''. We will apply Theorem \ref{0th:main} to all arrows
in the diagram, except the lower one, to show that they are categorical
equivalences. It then follows that the lower arrow is also a categorical
equivalence.

For the upper arrow, we apply Theorem \ref{0th:main} to $k=3$,
$\cE_0=F_{\r{ft}}$, $\cE_1=P$, $\cE_2=I_{\r{ft}}$, $\cE_3=I$. Conditions (1)
and (3) are obviously satisfied. For Condition (2), note that every morphism
$f$ in $F_{\r{ft}}$ can be written as a disjoint union $\coprod f_i$ of
morphisms $f_i$ of $\Sch'$. It then suffices to apply Nagata
compactification to each $f_i$. For the left arrow, we apply Theorem
\ref{0th:main} to $k=3$, $\cE_0=\cE_1=I$, $\cE_2=I_{\r{ft}}$, $\cE_3=P$. All
the conditions are obviously satisfied. Finally, we apply Corollary
\ref{0co:main} to ($k=2$, $\cE_0=F$,) $\cE_1=I$, $\cE_2=F_{\r{ft}}$. To
verify the assumption of Corollary \ref{0co:main}, let $f$ be a morphism of
$\Sch''$. Then $f$ has the form $\coprod_{i,j} X_{ij}\to \coprod_i Y_i$ and
is induced by morphisms $X_{ij}\to Y_i$, where $X_{ij}$ and $Y_i$ are
quasi-compact and quasi-separated schemes. Then $f$ is the composition
$\coprod_{i,j} X_{ij}\xto{q}\coprod_{i,j}Y_i\xto{p} \coprod_i Y_i$ with
$p\in I$ and $q\in F_{\r{ft}}$.
\end{proof}

The proof of Theorem \ref{0th:main} consists of two steps. Let us illustrate
them in the case of Corollary \ref{0co:main}. The map $g$ can be decomposed
as
\[\delta_2^*\N(\cC)_{\cE_1,\cE_2}^\cart\xrightarrow{g'}\delta_2^*\N(\cC)_{\cE_1,\cE_2}
\xrightarrow{g''}\N(\cC),
\]
where $\delta_2^*\N(\cC)_{\cE_1,\cE_2}$ is the simplicial set whose
$n$-simplices are diagrams \eqref{0eq:simplex} without the requirement that
every square is Cartesian, $g'$ is the natural inclusion and $g''$ is the map
remembering the diagonal. We prove that both $g'$ and $g''$ are categorical
equivalences. The fact that $g''$ is a categorical equivalence is an
$\infty$-categorical generalization of Deligne's result
\cite{SGA4XVII}*{Proposition 3.3.2}.

The article is organized as follows. In Section \ref{1ss}, we collect some
basic definitions and facts in the theory of $\infty$-categories \cite{Lu1}
for the reader's convenience. In Section \ref{2ss}, we develop a general
technique for constructing functors to $\infty$-categories. The method will
be used several times in this article and its sequels \cites{LZ1,LZ2}. In
Section \ref{3ss}, we introduce several notions related to multisimplicial
sets used in the statements of our main results. In particular, we define the
restricted multisimplicial nerve of an $\infty$-category with extra data. In
Section \ref{4ss}, we prove a multisimplicial descent theorem, which implies
that the map $g''$ is a categorical equivalence.  In Section \ref{5ss}, we
prove a Cartesian gluing theorem, which implies that the inclusion $g'$ is a
categorical equivalence. A Cartesian gluing formalism for pseudofunctors
between $2$-categories was developed in \cite{Zh1}. Our treatment here is
quite different and more adapted to the higher categorical context. In
Section \ref{6ss}, we prove some facts about inclusions of simplicial sets
used in the previous sections.

\subsection*{Conventions}

Unless otherwise specified, a category is to be understood as an ordinary
category. We will not distinguish between sets and categories in which the
only morphisms are identity morphisms. For categories $\cC$ and $\cD$, we let
$\Fun(\cC,\cD)$ denote the \emph{category of functors} from $\cC$ to $\cD$,
whose objects are functors and whose morphisms are natural transformations.
We let $\id$ denote various identity morphisms.

Throughout this article, an effort has been made to keep our notation
consistent with those in \cite{Lu1}.

\subsection*{Acknowledgments}

We thank Ofer Gabber, Luc Illusie, Aise Johan de~Jong, Shenghao Sun, and
Xinwen Zhu for useful discussions during the preparation of this article and
its sequels. We thank the referee for a careful reading of the manuscript
and for many useful comments, especially his suggestions on Lemma
\ref{1le:component}. Part of this work was done during a visit of the first
author to the Morningside Center of Mathematics, Chinese Academy of
Sciences, in Beijing. He thanks the Center for its hospitality. The first
author was partially supported by NSF grant DMS--1302000. The second author
was partially supported by China's Recruitment Program of Global Experts;
National Natural Science Foundation of China Grant 11321101; National Center
for Mathematics and Interdisciplinary Sciences and Hua Loo-Keng Key
Laboratory of Mathematics, Chinese Academy of Sciences.

\section{Simplicial sets and $\infty$-categories}
\label{1ss}

In this section, we collect some basic definitions and facts in the theory
of $\infty$-categories developed by Joyal \cites{Joyal0,Joyal} (who calls
them ``quasi-categories'') and Lurie \cite{Lu1}. For a more systematic
introduction to Lurie's theory, we recommend \cite{Groth}.

For $n\ge 0$, we let $[n]$ denote the totally ordered set $\{0,\dots,n\}$
and we put $[-1]=\emptyset$. We let $\del$ denote the \emph{category of
combinatorial simplices}, whose objects are the totally ordered sets $[n]$
for $n\geq0$ and whose morphisms are given by (non-strictly)
order-preserving maps. For $n\geq 0$ and $0\leq k\leq n$, the face map
$d^n_k\colon [n-1]\to[n]$ is the unique injective order-preserving map such
that $k$ is not in the image; and the degeneracy map $s^n_k\colon [n+1]\to
[n]$ is the unique surjective order-preserving map such that
$(s^n_k)^{-1}(k)$ has two elements.

\begin{definition}[Simplicial set and $\infty$-category]
We let $\Set$ denote the category of sets\footnote{More rigorously, $\Set$
is the category of sets in a universe that we fix once and for all.}.
\begin{itemize}
  \item We define the category of \emph{simplicial sets}, denoted by
      $\Sset$, to be the functor category $\Fun(\del^{op},\Set)$. For a
      simplicial set $S$, we denote by $S_n=S([n])$ its set of
      $n$-simplices.

  \item For $n\geq0$, we denote by $\Delta^n=\Fun(-,[n])$ the simplicial
      set represented by $[n]$. We let $\partial \Delta^n\subseteq
      \Delta^n$ denote the simplicial subset obtained by removing the
      interior, namely the $n$-simplex defined by $\id_{[n]}\colon [n]\to
      [n]$. In particular, $\partial\Delta^0=\emptyset$. For each $0\leq
      k\leq n$, we define the \emph{$k$-th horn}
      $\Lambda^n_k\subseteq\partial\Delta^n$ to be the simplicial subset
      obtained by removing the face opposite to the $k$-th vertex, namely
      the $(n-1)$-simplex defined by $d^n_k\colon [n-1]\to [n]$.

  \item An \emph{$\infty$-category} (resp.\ Kan complex) is a simplicial
      set $\cC$ such that $\cC\to \Delta^0$ has the right lifting property
      with respect to all inclusions $\Lambda^n_k\subseteq\Delta^n$ with
      $0<k<n$ (resp.\ $0\le k\le n$). In other words, a simplicial set
      $\cC$ is an $\infty$-category (resp.\ Kan complex) if and only if
      every map $\Lambda^n_k\to\cC$ with $0<k<n$ (resp.\ $0\le k\le n$)
      can be extended to a map $\Delta^n\to\cC$.
\end{itemize}
\end{definition}

Note that a Kan complex is an $\infty$-category. The lifting property in the
definition of $\infty$-category was first introduced (under the name of
``restricted Kan condition'') by Boardman and Vogt \cite{BV}*{Definition
IV.4.8}.

The lifting property defining $\infty$-category (resp.\ Kan complex) can be
adapted to the relative case. More precisely, a map $f\colon T\to S$ of
simplicial sets is called an \emph{inner fibration} (resp.\ \emph{Kan
fibration}) if it has the right lifting property with respect to all
inclusions $\Lambda^n_k\subseteq\Delta^n$ with $0<k<n$ (resp.\ $0\leq k\leq
n$). A map $i\colon A\to B$ of simplicial sets is said to be \emph{inner
anodyne} (resp.\ \emph{anodyne}) if it has the left lifting property with
respect to all inner fibrations (resp.\ Kan fibrations).

\begin{example}[Nerve of an ordinary category]
Let $\cC$ be an ordinary category. The \emph{nerve} $\N(\cC)$ of $\cC$ is
the simplicial set given by $\N(\cC)_n=\Fun([n],\cC)$. It is easy to see
that $\N(\cC)$ is an $\infty$-category and we can identify $\N(\cC)_0$ and
$\N(\cC)_1$ with the set of objects $\Ob(\cC)$ and the set of arrows
$\Ar(\cC)$, respectively.
\end{example}

Conversely, given a simplicial set $S$, one constructs an ordinary category
$\H S$, the \emph{homotopy category of} $S$ (\cite{Lu1}*{Definition
1.1.5.14}, ignoring the enrichment) such that $\Ob(\H S)=S_0$. For an
$\infty$-category $\cC$, $\Hom_{\H\cC}(x,y)$ consists of homotopy classes of
edges $x\to y$ in $\cC_1$ \cite{Lu1}*{Proposition 1.2.3.9}. By
\cite{Lu1}*{Proposition 1.2.3.1}, $\H$ is a left adjoint to the nerve
functor $\N$.

\begin{definition}[Object, morphism, equivalence]
Let $\cC$ be an $\infty$-category. Vertices of $\cC$ are called
\emph{objects} of $\cC$ and edges of $\cC$ are called \emph{morphisms} of
$\cC$. A morphism of $\cC$ is called an \emph{equivalence} if it defines an
isomorphism in the homotopy category $\H \cC$.
\end{definition}

The category $\Sset$ is Cartesian-closed. For objects $S$ and $T$ of
$\Sset$, we let $\Map(S,T)$ denote the internal mapping object defined by
$\Hom_{\Sset}(K,\Map(S,T))\simeq \Hom_{\Sset}(K\times S,T)$. If $\cC$ is an
$\infty$-category, we write $\Fun(S,\cC)$ instead of $\Map(S,\cC)$. One can
show that $\Fun(S,\cC)$ is an $\infty$-category \cite{Lu1}*{Proposition
1.2.7.3 (1)} (see also \cite{Lu1}*{Corollary 2.3.2.5}).

\begin{definition}[Functor, natural transformation, natural equivalence]
Objects of $\Fun(S,\cC)$ are called \emph{functors} $S\to \cC$, morphisms of
$\Fun(S,\cC)$ are called \emph{natural transformations}, and equivalences in
$\Fun(S,\cC)$ are called \emph{natural equivalences}.
\end{definition}

\begin{remark}
Let $f,g\colon S\to\cC$ be functors and $\phi\colon f\to g$ a natural
transformation. Then $\phi$ is a natural equivalence if and only if for every
vertex $s$ of $S$, the morphism $\phi(s)\colon f(s)\to g(s)$ is an
equivalence in $\cC$. We refer the reader to \cite{Lu1}*{Proposition 3.1.2.1}
for a generalization (see \cite{Lu1}*{Remark 2.4.1.4}).
\end{remark}

\begin{remark}\label{1re:homotopy}
Let $\cC$ be an $\infty$-category and let $f,g\colon x\to y$ be morphisms of
$\cC$. Then $f$ and $g$ are homotopic (namely, having the same image in $\H
\cC$) if and only if they are equivalent when viewed as objects of the
$\infty$-category defined by the fiber of the map $\Fun(\Delta^1,\cC)\to
\Fun(\partial \Delta^1,\cC)$. Indeed, the latter condition means that there
exist a morphism $h\colon x\to y$ and two $2$-simplices of $\cC$ as shown in
the diagram
\[\xymatrix{x\ar[d]_{\id_x}\ar[r]^f\ar[rd]^h & y\ar[d]^{\id_y}\\x\ar[r]^g&y.}\]
By definition (resp.\ \cite{Lu1}*{Remark 1.2.3.6}), the existence of the
$2$-simplex in the upper right (resp.\ lower left) corner means that $f$
(resp.\ $g$) and $h$ are homotopic. This proves the ``if'' part. For the
``only if'' part, it suffices to take $h=g$ and to take the $2$-simplex in
the lower left corner to be degenerate.
\end{remark}

We now recall the notion of categorical equivalence of simplicial sets,
which is essential to our article. There are several equivalent definitions
of categorical equivalence. The one given below (equivalent to
\cite{Lu1}*{Definition 1.1.5.14} in view of \cite{Lu1}*{Proposition
2.2.5.8}), due to Joyal \cite{Joyal}, will be used in the proofs of our
theorems.

\begin{definition}[Categorical equivalence]\label{1de:categorical_equivalence}
A map $f\colon T\to S$ of simplicial sets is a \emph{categorical
equivalence} if for every $\infty$-category $\cC$, the induced
functor
\[\H\Fun(S,\cC)\to\H\Fun(T,\cC)\]
is an equivalence of ordinary categories.
\end{definition}

If $f\colon T\to S$ is a categorical equivalence, then the induced functor
$\H T\to \H S$ is an equivalence of ordinary categories. An inner anodyne
map is a categorical equivalence \cite{Lu1}*{Lemma 2.2.5.2}. The category
$\Sset$ admits the Joyal model structure \cite{Lu1}*{Theorem 2.2.5.1}, for
which weak equivalences are precisely categorical equivalences.

\begin{remark}
Let $\cC$ and $\cD$ be $\infty$-categories. A functor $f\colon \cC\to \cD$
is a categorical equivalence if and only if there exist a functor $g\colon
\cD\to \cC$ and natural equivalences between $f\circ g$ and $\id_\cD$ and
between $g\circ f$ and $\id_{\cC}$. Indeed, the ``only if'' direction
follows from \cite{Lu1}*{Proposition 1.2.7.3} and the other direction is
clear.
\end{remark}

The following criterion of categorical equivalence will be used in the
proofs of our theorems. Given maps of simplicial sets $v,v'\colon Y\to X$
and an inner fibration $p\colon X\to S$ such that $p\circ v=p\circ v'$, we
say that $v$ and $v'$ are \emph{homotopic over $S$} if they are equivalent
when viewed as objects of the $\infty$-category defined by the fiber of the
inner fibration $\Map(Z,X)\to \Map(Z,S)$ induced by $p$.

\begin{lemma}\label{1le:categorical_equivalence}
A map of simplicial sets $f\colon Y\to Z$ is a categorical equivalence if
and only if the following conditions are satisfied for every
$\infty$-category $\cD$:
\begin{enumerate}
\item For every $l=0,1$ and every commutative diagram
  \[\xymatrix{Y\ar[r]^-v\ar[d]_f & \Fun(\Delta^l,\cD)\ar[d]^p\\
  Z\ar[r]^-w & \Fun(\partial \Delta^l,\cD)}\]
where $p$ is induced by the inclusion $\partial \Delta^l\subseteq
\Delta^l$, there exists a map $u\colon Z\to \Fun(\Delta^l,\cD)$ satisfying
$p\circ u=w$ such that $u\circ f$ and $v$ are homotopic over
$\Fun(\partial \Delta^l,\cD)$.

\item For $l=2$ and every commutative diagram as above, there exists a map
    $u\colon Z\to \Fun(\Delta^l,\cD)$ satisfying $p\circ u=w$.
\end{enumerate}
\end{lemma}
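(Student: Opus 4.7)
The plan is to translate each lifting condition into one of the three defining properties of an equivalence of ordinary categories applied to the functor $\H\Fun(f,\cD)\colon\H\Fun(Z,\cD)\to\H\Fun(Y,\cD)$ induced by $f$: condition (1) with $l=0$ will correspond to essential surjectivity, (1) with $l=1$ to fullness, and (2) with $l=2$ to faithfulness. Since by Definition \ref{1de:equiv} the map $f$ is a categorical equivalence if and only if $\H\Fun(f,\cD)$ is an equivalence of categories for every $\infty$-category $\cD$, establishing this dictionary in both directions will prove the lemma.

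For sufficiency, assume conditions (1) and (2) hold. Essential surjectivity is immediate from (1) with $l=0$, since $\Fun(\partial\Delta^0,\cD)=\Delta^0$ makes the datum $w$ vacuous: given $v\colon Y\to\cD$, the produced $u$ satisfies $[u\circ f]=[v]$ in $\H\Fun(Y,\cD)$. Fullness follows from (1) with $l=1$: given $w_0,w_1\colon Z\to\cD$ and a morphism $[v]\colon w_0\circ f\to w_1\circ f$ in $\H\Fun(Y,\cD)$ represented by some edge $v\colon Y\to\Fun(\Delta^1,\cD)$, the pair $(w,v)$ with $w=(w_0,w_1)$ fits the hypothesis and produces a lift $u$ whose homotopy class maps to $[v]$. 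Faithfulness follows from (2) with $l=2$: given $u_0,u_2\colon w_0\to w_1$ in $\Fun(Z,\cD)$ with $u_0\circ f\sim u_2\circ f$, a witnessing 2-simplex $\tilde v\colon Y\to\Fun(\Delta^2,\cD)$ (having $d_0\tilde v$ degenerate at $w_1\circ f$ and the remaining faces equal to $u_2\circ f, u_0\circ f$), together with the analogous triangle $w$ over $Z$ with faces $\id_{w_1},u_2,u_0$, satisfies $p\circ\tilde v=w\circ f$; the 2-simplex $u$ over $Z$ produced by (2) then witnesses $[u_0]=[u_2]$ in $\H\Fun(Z,\cD)$.

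For necessity, suppose $f$ is a categorical equivalence. Condition (1) with $l=0$ follows from essential surjectivity combined with \cite[Proposition 1.2.4.1]{Lu1}, which identifies isomorphisms in the homotopy category of an $\infty$-category with equivalences in the $\infty$-category itself, providing the required homotopy $u\circ f\simeq v$ in $\Fun(Y,\cD)$. Condition (1) with $l=1$ follows directly from fullness: a lift $[u]\colon w_0\to w_1$ of $[v]$ supplies both a map $u\colon Z\to\Fun(\Delta^1,\cD)$ with $p\circ u=w$ and a homotopy $u\circ f\sim v$ over $\Fun(\partial\Delta^1,\cD)$. For (2) with $l=2$, the given 2-simplex $v$ exhibits the composition relation $[w_1\circ f]=[w_0\circ f]\circ[w_2\circ f]$ in $\H\Fun(Y,\cD)$, and by faithfulness this descends to $[w_1]=[w_0]\circ[w_2]$ in $\H\Fun(Z,\cD)$; this relation in turn is realized by an actual 2-simplex with the prescribed boundary $w$.

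The main obstacle I anticipate is this last realizability step in the necessity of (2): producing a 2-simplex with the exact prescribed boundary $w$, not merely one representing the correct composition class. I plan to handle it by first filling the inner horn $\Lambda^2_1\to\Fun(Z,\cD)$ spanned by $w_0$ and $w_2$ to obtain a 2-simplex $\sigma_0$ whose $d_1$-face is only homotopic to $w_1$, and then assembling $\sigma_0$ with a 2-simplex witnessing the homotopy $d_1\sigma_0\sim w_1$ into an inner horn $\Lambda^3_1\to\Fun(Z,\cD)$ whose filler produces the desired 2-simplex as its missing face.
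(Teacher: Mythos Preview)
Your approach is exactly the paper's: the proof there simply asserts that conditions (1) for $l=0,1$ and (2) for $l=2$ correspond respectively to essential surjectivity, fullness, and faithfulness of $\H\Fun(Z,\cD)\to\H\Fun(Y,\cD)$, without further elaboration. Your proposal is a correct fleshing-out of that assertion, and the obstacle you flag (producing a $2$-simplex with the \emph{exact} prescribed boundary $w$ rather than one merely homotopic to it) together with your $\Lambda^3_1$-filling fix is the right way to handle the only nontrivial step.
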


\begin{proof}
By definition, that $f$ is a categorical equivalence means that for every
$\infty$-category $\cD$, the functor
\[
F\colon \sh \Fun(Z,\cD)\to \sh\Fun(Y,\cD)
\]
induced by $f$ is an equivalence of categories. We show that the conditions
for $l=0,1,2$ mean that $F$ is essentially surjective, full, and faithful,
respectively. For $l=0$, this is clear. For $l=1$, this follows from Remark
\ref{1re:homotopy}. For $l=2$, the condition means that for functors
$g_0,g_1,g_2\colon Z\to \cD$, and natural transformations $\phi\colon g_0\to
g_1$, $\psi\colon g_1\to g_2$, $\chi\colon g_0\to g_2$ such that
$F([\psi]\circ [\phi])=F([\chi])$, we have $[\psi]\circ [\phi]=[\chi]$. Here
$[\phi]$, $[\psi]$, $[\chi]$ denote the homotopy classes of $\phi$, $\psi$,
$\chi$, respectively. The condition is clearly satisfied if $F$ is faithful.
Conversely, if $F$ is faithful, it suffices to take $g_1=g_2$ and
$\psi=\id$.
\end{proof}

In Section \ref{3ss}, we will introduce the notion of multi-marked
simplicial sets, which generalizes the notion of marked simplicial sets in
\cite{Lu1}*{Definition 3.1.0.1}. Since marked simplicial sets play an
important role in many arguments for $\infty$-categories, we recall its
definition.

\begin{definition}[Marked simplicial set]
A \emph{marked simplicial set} is a pair $(X,\cE)$ where $X$ is a simplicial
set and $\cE\subseteq X_1$ is a subset containing all degenerate edges. A
morphism $f\colon (X,\cE)\to (X',\cE')$ of marked simplicial sets is a map
$f\colon X\to X'$ of simplicial sets such that $f(\cE)\subseteq\cE'$. The
category of marked simplicial set will be denoted by $\Mset$.
\end{definition}

The forgetful functor $F \colon \Mset\to \Sset$ carrying $(X,\cE)$ to $X$
admits a right adjoint carrying a simplicial set $S$ to $S^{\sharp}=(S,S_1)$
and a left adjoint carrying $S$ to $S^{\flat}=(S,\cE)$, where $\cE$ is the
set of all degenerate edges. For an $\infty$-category $\cC$, we let
$\cC^\natural$ denote the marked simplicial set $(\cC,\cE)$, where $\cE$ is
the set of all edges of $\cC$ that are equivalences. The category $\Mset$ is
equipped with the \emph{Cartesian model structure} \cite{Lu1}*{Proposition
3.1.3.7}. The adjoint pair $((-)^\flat, F)$ is a Quillen equivalence between
the Joyal model structure on $\Sset$ and the Cartesian model structure on
$\Mset$ \cite{Lu1}*{Theorem 3.1.5.1}.

The category $\Mset$ is Cartesian-closed. For objects $X$ and $Y$ of
$\Mset$, we let $\Map^\flat(X,Y)$ denote the underlying simplicial set of
the internal mapping object $Y^X$. We let $\Map^\sharp(X,Y)\subseteq
\Map^\flat(X,Y)$ denote the largest simplicial subset such that
$\Map^\sharp(X,Y)^\sharp\subseteq Y^X$. If $\cC$ is an $\infty$-category,
then $\Map^\flat(X,\cC^\natural)$ is an $\infty$-category and
$\Map^\sharp(X,\cC^\natural)$ is the largest Kan complex
\cite{Lu1}*{Proposition 1.2.5.3} contained in $\Map^\flat(X,\cC^\natural)$
\cite{Lu1}*{Remark 3.1.3.1} (see also \cite{Lu1}*{Lemma 3.1.3.6}), so that
$(\cC^\natural)^X=\Map^\flat(X,\cC^\natural)^\natural$.

\section{Constructing functors via the category of simplices}
\label{2ss}

In this section, we develop a general technique for constructing functors to
$\infty$-categories, which is the key to several constructions in this
article and its sequels. For a functor $F\colon K\to \cC$ from a simplicial
set $K$ to an $\infty$-category $\cC$, the image $F(\sigma)$ of a simplex
$\sigma$ of $K$ is a simplex of $\cC$, functorial in $\sigma$. Here we
address the problem of constructing $F$ when, instead of having a canonical
choice for $F(\sigma)$, one has a weakly contractible simplicial set
$\cN(\sigma)$ of candidates for $F(\sigma)$.

We start with some generalities on diagrams of simplicial sets. Let $\cI$ be
a (small) ordinary category. We consider the injective model structure on the
functor category $(\Sset)^\cI\colonequals\Fun(\cI,\Sset)$. We say that a
morphism $i\colon \cN\to \cM$ in $(\Sset)^\cI$ is \emph{anodyne} if
$i(\sigma)\colon \cN(\sigma)\to \cM(\sigma)$ is anodyne for every object
$\sigma$ of $\cI$. We say that a morphism $\cR\to \cR'$ in $(\Sset)^\cI$ is
an \emph{injective fibration} if it has the right lifting property with
respect to every anodyne morphism $\cN\to \cM$ in $(\Sset)^\cI$. We say that
an object $\cR$ of $(\Sset)^\cI$ is \emph{injectively fibrant} if the
morphism from $\cR$ to the final object $\Delta^0_\cI$ is an injective
fibration. The right adjoint of the diagonal functor $\Sset\to (\Sset)^\cI$
is the global section functor
\[\Gamma\colon (\Sset)^\cI\to \Sset, \quad
\Gamma(\cN)_q=\r{Hom}_{(\Sset)^\cI}(\Delta^q_\cI,\cN),
\]
where $\Delta^q_\cI\colon \cI \to \Sset$ is the constant functor of value
$\Delta^q$.

\begin{notation}
Let $\Phi\colon \cN\to \cR$ be a morphism of $(\Sset)^\cI$. We let
$\Gamma_\Phi(\cR)\subseteq \Gamma(\cR)$ denote the simplicial subset, union
of the images of $\Gamma(\Psi)\colon \Gamma(\cM)\to \Gamma(\cR)$ for all
factorizations
\[\cN\xrightarrow{i}\cM\xrightarrow{\Psi} \cR\]
of $\Phi$ such that $i$ is anodyne.
\end{notation}

\begin{remark}\label{2re:components}
As the referee pointed out, $\Gamma_\Phi(\cR)$ can be computed using one
single factorization
\[\cN\xto{i'}\cM'\xto{\Psi'}\cR\]
of $\Phi$, where $i'$ is anodyne and $\Psi'$ is an injective fibration. For
every factorization $\cN\xrightarrow{i}\cM\xrightarrow{\Psi} \cR$ of $\Phi$
such that $i$ is anodyne, there exists a dotted arrow rendering the diagram
\[\xymatrix{\cN\ar[r]^{i'}\ar[d]_i & \cM'\ar[d]^{\Psi'}\\
\cM\ar[r]^{\Psi}\ar@{..>}[ru] & \cR}
\]
commutative. Thus $\Gamma_\Phi(\cR)$ is simply the image of $\Gamma(\Psi')$.
Since $\Gamma(\Psi')$ is a Kan fibration, $\Gamma_\Phi(\cR)$ is a union of
connected components of $\Gamma(\cR)$. Indeed, the inclusion
$\Gamma_\Phi(\cR)\subseteq \Gamma(\cR)$ satisfies the right lifting property
with respect to the inclusion $\Delta^{\{j\}}\subseteq\Delta^n$ for all
$0\le j\le n$.
\end{remark}

By definition, the map $\Gamma(\Phi)\colon \Gamma(\cN)\to \Gamma(\cR)$
factorizes through $\Gamma_\Phi(\cR)$. The construction of
$\Gamma_\Phi(\cR)$ enjoys the following functoriality.

\begin{remark}
For a commutative diagram
\[\xymatrix{\cN\ar[r]^\Phi\ar[d]_G & \cR\ar[d]^F \\ \cN' \ar[r]^{\Phi'} & \cR'}\]
in $(\Sset)^\cI$, the map $\Gamma(F)\colon \Gamma(\cR)\to \Gamma(\cR')$
carries $\Gamma_\Phi(\cR)$ into $\Gamma_{\Phi'}(\cR')$. Indeed, for every
factorization $\cN\xrightarrow{i}\cM\xrightarrow{\Psi} \cR$ of $\Phi$ such
that $i$ is anodyne, we have a commutative diagram
\[\xymatrix{\cN\ar[r]^i\ar[d]_G & \cM\ar[r]^\Psi\ar[d] & \cR\ar[d]^F\\
\cN'\ar[r]^{i'}&\cM'\ar[r]^{\Psi'} & \cR',}
\]
where $i'$ is the pushout of $i$ by $G$, hence is anodyne.

For a functor $g\colon \cI'\to \cI$, composition with $g$ induces a functor
$g^*\colon (\Sset)^\cI\to (\Sset)^{\cI'}$. By a slight abuse of notation, we
still denote by $g^*\colon \Gamma(\cR)\to \Gamma(g^*\cR)$ the pullback map
induced by the functor $g^*$. Then the pullback map $g^*$ carries
$\Gamma_\Phi(\cR)$ into $\Gamma_{g^*\Phi}(g^*\cR)$. Indeed, for every
factorization $\cN\xrightarrow{i}\cM\xrightarrow{\Psi} \cR$ of $\Phi$ such
that $i$ is anodyne, $g^*\cN\xrightarrow{g^*i}g^*\cM\xrightarrow{g^*\Psi}
g^*\cR$ is a factorization of $g^*\Phi$ such that $g^* i$ is anodyne, and we
have the following commutative diagram
\[\xymatrix{\Gamma(\cM)\ar[d]_{g^*}\ar[rr]^{\Gamma(\Psi)}&&\Gamma(\cR)\ar[d]^{g^*}\\
\Gamma(g^*\cM)\ar[rr]^{\Gamma(g^*\Psi)}&& \Gamma(g^*\cR).}
\]
\end{remark}

Our construction technique relies on the following property of
$\Gamma_\Phi(\cR)$. In a previous draft of this article, the statement of
part (1) was incorrect. We thank the referee for suggesting the following
correction.

\begin{lemma}\label{1le:component}
Let $\cI$ be a category. Let $\cN$, $\cR$ be objects of $(\Sset)^\cI$ such
that $\cN(\sigma)$ is weakly contractible for all objects $\sigma$ of $\cI$
and $\cR$ is injectively fibrant.
\begin{enumerate}
\item For every morphism $\Phi\colon \cN\to \cR$, the simplicial set
    $\Gamma_\Phi(\cR)$ is nonempty and connected, hence a connected
    component of $\Gamma(\cR)$.

\item For homotopic morphisms $\Phi,\Phi'\colon \cN\to \cR$, we have
    $\Gamma_\Phi(\cR)=\Gamma_{\Phi'}(\cR)$.
\end{enumerate}
\end{lemma}

The condition in (2) means that there exists a morphism $H\colon
\Delta^1_\cI\times \cN\to \cR$ such that $H\res \Delta^{\{0\}}_\cI\times
\cN=\Phi$ and $H\res \Delta^{\{1\}}_\cI\times \cN=\Phi'$. Note that
$\Gamma(\cR)$ is a Kan complex.

\begin{proof}
(1) We apply Remark \ref{2re:components}. Since the morphism $\cM'\to
\Delta^0_\cI$ is a trivial fibration, $\Gamma(\cM')$ is a contractible Kan
complex. Therefore its image $\Gamma_\Phi(\cR)$ is nonempty and connected,
hence a connected component of $\Gamma(\cR)$.

(2) We define an object $\cN^\triangleright$ by
$\cN^\triangleright(\sigma)=\cN(\sigma)^\triangleright$ \cite{Lu1}*{Notation
1.2.8.4}. Since the inclusion $\Delta^1_\cI\times \cN\hookrightarrow
\Delta^1_\cI\times \cN^\triangleright$ is anodyne and $\cR$ is injectively
fibrant, we can find a morphism $H'$ as shown in the diagram
\[
\xymatrix{\Delta_\cI^1\times \cN\ar[r]^-{H}\ar@{^(->}[d]& \cR\\
\Delta_\cI^1\times\cN^{\triangleright}\ar@{..>}[ur]_{H'} }
\]
rendering the diagram commutative. We denote by $h\colon \Delta_\cI^1\to
\cR$ the restriction of $H'$ to the cone point of $\cN^{\triangleright}$,
corresponding to an edge of $\Gamma(\cR)$. Then $h(0)$ belongs to
$\Gamma_\Phi(\cR)$ and $h(1)$ belongs to $\Gamma_{\Phi'}(\cR)$. Since
$\Gamma_\Phi(\cR)$ and $\Gamma_{\Phi'}(\cR)$ are connected components of
$\Gamma(\cR)$ by (1), we have $\Gamma_\Phi(\cR)=\Gamma_{\Phi'}(\cR)$.
\end{proof}

Let $K$ be a simplicial set. The \emph{category of simplices of} $K$, which
we denote by $\del_{/K}$ following \cite{Lu1}*{Notation 6.1.2.5}, plays a key
role in our construction technique. Recall that $\del_{/K}$ is the strict
fiber product $\del\times_{\Sset}(\Sset)_{/K}$. An object of $\del_{/K}$ is a
pair $(n,\sigma)$, where $n\geq0$ is some integer and $\sigma\in
\Hom_{\Sset}(\Delta^n,K)$. A morphism $(n,\sigma)\to (n',\sigma')$ is a map
$d\colon \Delta^n\to \Delta^{n'}$ such that $\sigma=\sigma'\circ d$. Note
that $d$ is a monomorphism (resp.\ epimorphism) if and only if the underlying
map $[n]\to [n']$ is injective (resp.\ surjective). Every epimorphism of
$\del_{/K}$ is split. Moreover, $\del_{/K}$ admits pushouts of epimorphisms
by epimorphisms. In what follows, we sometimes simply write $\sigma$ for an
object of $\del_{/K}$ if $n$ is insensitive.

The usefulness of $\del_{/K}$ is demonstrated by the following lemma.

\begin{lemma}[\cite{Hovey}*{Lemma 3.1.3}]\label{2le:Hovey}
The maps $\sigma\colon \Delta^n\to K$ exhibit $K$ as the colimit of the
functor $\del_{/K}\to\Sset$ carrying $(n,\sigma)$ to $\Delta^n$.
\end{lemma}

\begin{proof}
We include a proof for completeness. Let $X$ be the colimit. Given $m\geq
0$, the set $X_m$ is the colimit of the functor $F_m\colon \del_{/K}\to
\Set$ carrying $(n,\sigma)$ to $(\Delta^n)_m\simeq
\Hom_{\Sset}(\Delta^m,\Delta^n)$. We denote by $\del_{/K}^{[m]}$ the
category of elements of $F_m$. Objects of $F_m$ are triples
\[(n,\sigma,\tau):\xymatrix{
\Delta^m \ar[r]^-{\tau} & \Delta^n \ar[r]^-{\sigma} & K, }
\]
and morphisms $(n,\sigma,\tau)\to(n',\sigma',\tau')$ are commutative
diagrams
\[\xymatrix{
\Delta^m \ar[r]^-{\tau}\ar@{=}[d] & \Delta^n\ar[r]^{\sigma}\ar[d]^-{d} & K \ar@{=}[d] \\
\Delta^m \ar[r]^-{\tau'} & \Delta^{n'}\ar[r]^{\sigma'}  & K. }
\]
Note that $\del_{/K}^{[m]}$ is a disjoint union of categories indexed by
$\rho=\sigma\tau\in K_m$, each admitting an initial object
\[(m,\rho,\id_{\Delta^m})\colon \Delta^m\xto{\id} \Delta^m\xto{\rho} K.\]
The lemma then follows from the fact that the colimit of any functor $F\colon
\cC\to \Set$ from a category $\cC$ to $\Set$ can be identified with the set
of connected component of the category of elements of $F$.
\end{proof}

\begin{notation}
We define a functor $\Map[K,-]\colon \Mset\to (\Sset)^{(\del_{/K})^{op}}$ as
follows. For a marked simplicial set $M$, we define $\Map[K,M]$  by
\[\Map[K,M](n,\sigma)=
\Map^\sharp((\Delta^n)^\flat,M),
\]
for every object $(n,\sigma)$ of $\del_{/K}$. A morphism $d\colon
(n,\sigma)\to (n',\sigma')$ in $\del_{/K}$ goes to the natural restriction
map $\r{Res}^d\colon \Map^\sharp((\Delta^{n'})^\flat,M)\to
\Map^\sharp((\Delta^n)^\flat,M)$. For an $\infty$-category $\cC$, we set
$\Map[K,\cC]=\Map[K,\cC^\natural]$.
\end{notation}

The following remark shows how $\Map[K,-]$ is related with the problem of
constructing functors.

\begin{remark}\label{2re:functors}
The map
\[\Map^{\sharp}(K^\flat,M)\to \Gamma(\Map[K,M])\]
induced by the restriction maps $\Map^{\sharp}(K^\flat,M)\to
\Map^{\sharp}((\Delta^n)^\flat,M)$ is an isomorphism of simplicial sets.
Indeed, the set of $m$-simplices of $\Map^{\sharp}(K^\flat,M)$ can be
identified with $\Hom_{\Mset}((\Delta^m)^\sharp\times K^\flat,M)$, while the
set of $m$-simplices of $\Gamma(\Map[K,M])$ is a limit of the functor
$\del_{/K}\to \Set$ carrying $(n,\sigma)$ to
$\Hom_{\Mset}((\Delta^m)^\sharp\times (\Delta^n)^\flat,M)$. We are thus
reduced to showing that the maps $\sigma\colon \Delta^n\to K$ exhibit
$(\Delta^m)^\sharp \times K^\flat$ as the colimit of the functor
$\del_{/K}\to \Sset$ carrying $(n,\sigma)$ to $(\Delta^m)^\sharp \times
(\Delta^n)^\flat$. Note that the functor
$(\Delta^m)^\sharp\times(-)^\flat\colon \Sset\to \Mset$ admits a right
adjoint $\Map^\flat((\Delta^m)^\sharp,-)$, hence preserves colimits. The
assertion then follows from Lemma \ref{2le:Hovey}.

Note that $\Map^\sharp(K^\flat,\cC^\natural)$ is the largest Kan complex
contained in $\Fun(K,\cC)$.
\end{remark}

If $g\colon K'\to K$ is a map, then composition with the functor
$\del_{/K'}\to \del_{/K}$ induced by $g$ defines a functor $g^*\colon
(\Sset)^{(\del_{/K})^{op}}\to (\Sset)^{(\del_{/K'})^{op}}$. We have
$g^*\Map[K,M]= \Map[K',M]$.

\begin{proposition}\label{1le:injective_fibration}
Let $f\colon Z\to T$ be a fibration in $\Mset$ with respect to the Cartesian
model structure, and let $K$ be a simplicial set. Then the morphism
$\Map[K,f]\colon \Map[K,Z]\to \Map[K,T]$ is an injective fibration in
$(\Sset)^{(\del_{/K})^{op}}$. In other words, for every commutative square in
$(\Sset)^{(\del_{/K})^{op}}$ of the form
 \[\xymatrix{\cN\ar[r]^-\Phi\ar@{^{(}->}[d] & \Map[K,Z]\ar[d]^-{\Map[K,f]}\\
   \cM\ar[r]_-\Psi\ar@{..>}[ur]^{\Omega} & \Map[K,T]}\]
such that $\cN\hookrightarrow \cM$ is anodyne, there exists a dotted arrow
as indicated, rendering the diagram commutative.
\end{proposition}

In particular, if $\cC$ is an $\infty$-category, then $\Map[K,\cC]$ is an
injectively fibrant object of $(\Sset)^{(\del_{/K})^{op}}$ since
$\cC^\natural$ is a fibrant object of $\Mset$ \cite{Lu1}*{Proposition
3.1.4.1}.

\begin{remark}\label{2re:Quillen}
The functor $\Map[K,-]$ admits a left adjoint $F\colon
(\Sset)^{(\del_{/K})^{op}}\to \Mset$ carrying $\cR$ to the coend of the
diagram
\[(\del_{/K})^{op}\times \del_{/K}\to \Mset,\quad
((n,\sigma),(m,\tau))\mapsto\cR(n,\sigma)^\sharp\times (\Delta^m)^\flat.
\]
The functor $F$ can be described more explicitly as follows. Note that for
functors $G\colon \cC^{op}\to \Set$, $H\colon \cC\to \Set$, where $\cC$ is a
category, the coend of the diagram
\[\cC^{op}\times \cC\to \Set,\quad (A,B)\mapsto G(A)\times H(B)
\]
can be identified with the colimit of the functor $\cD^{op}\to \Set$
carrying $(A,h)$ to $G(A)$, where $\cD$ is the category of elements of $H$.
Thus if we write $F\cR=(X,\cE)$, then $X_m$ is the colimit of the functor
$(\del_{/K}^{[m]})^{op}\to \Set$ carrying $(n,\sigma,\tau)$ to
$\cR(n,\sigma)_m$, where $\del_{/K}^{[m]}$ is the category defined in the
proof of Lemma \ref{2le:Hovey}. Therefore, $X_m$ is the disjoint union of
$\cR(m,\sigma)_m$ for all $m$-simplices $\sigma\colon \Delta^m\to K$.
Moreover, $\cE\subseteq X_1$ is the union of $\cR(1,\sigma)_1$ for all
degenerate edges $\sigma\colon \Delta^1\to K$.

It follows from the above description that $F$ preserves monomorphisms. Thus
Proposition \ref{1le:injective_fibration} shows that the pair
$(F,\Map[K,-])$ is a Quillen adjunction between $\Mset$ endowed with the
Cartesian model structure and $(\Sset)^{(\del_{/K})^{op}}$ endowed with the
injective model structure.
\end{remark}

\begin{remark}
If we replace $\del_{/K}$ by the full subcategory $\del_{/K}^{\r{nd}}$
spanned by nondegenerate simplices, then Proposition
\ref{1le:injective_fibration} still holds and the proof becomes simpler.
However, $\del_{/K}^{\r{nd}}$ is only functorial with respect to
monomorphisms of simplicial sets, which is insufficient for our
applications.
\end{remark}

\begin{proof}[Proof of Proposition \ref{1le:injective_fibration}]
For $n\ge 0$, we let $\cI_n$ denote the full subcategory of $\del_{/K}$
spanned by $(m,\sigma)$ for $m\le n$. We construct $\Omega\res \cI_n^{op}$ by
induction on $n$. It suffices to construct, for every $\sigma\colon
\Delta^n\to K$, a map $\Omega(n,\sigma)$ as the dotted arrow rendering the
following diagram commutative
\[
 \xymatrix{
  \cN(n,\sigma) \ar@{^(->}[d] \ar[rr]^-{\Phi(n,\sigma)}
  &&    \r{Map}^{\sharp}((\Delta^n)^{\flat},
  Z) \ar[d]^-{\Map^\sharp((\Delta^n)^{\flat},f)}   \\
  \cM(n,\sigma) \ar@{..>}[urr]^-{\Omega(n,\sigma)}\ar[rr]_-{\Psi(n,\sigma)}
  && \Map^\sharp((\Delta^n)^{\flat},T),}
\]
such that for every monomorphism $d\colon (n-1,\rho)\to(n,\sigma)$ and every
epimorphism $s\colon (n,\sigma)\to (n-1,\tau)$, the following two diagrams
commute
\[\xymatrix{
   \cM(n,\sigma) \ar[d]_-{\cM(d)}
   \ar[rr]^-{\Omega(n,\sigma)}
   && \r{Map}^{\sharp}((\Delta^{n})^{\flat},
   Z) \ar[d]^-{\r{Res}^d} \\
   \cM(n-1,\rho) \ar[rr]^-{\Omega(n-1,\rho)}
   && \r{Map}^{\sharp}((\Delta^{n-1})^{\flat},
   Z),
}\]
\[\xymatrix{
   \cM(n-1,\tau) \ar[d]_-{\cM(s)}
   \ar[rr]^-{\Omega(n-1,\tau)}
   && \r{Map}^{\sharp}((\Delta^{n-1})^{\flat},
   Z) \ar[d]^-{\r{Res}^s} \\
   \cM(n,\sigma) \ar[rr]^-{\Omega(n,\sigma)}
   && \r{Map}^{\sharp}((\Delta^{n})^{\flat},
   Z).
}\]

By the induction hypothesis, the maps $\Omega(n-1,\rho)$ amalgamate into a
map $\cM(n,\sigma)\to \r{Map}^{\sharp}((\partial\Delta^{n})^{\flat},Z)$, and
the maps $\Omega(n-1,\tau)$ amalgamate into a map $\cM(n,\sigma)^{\deg}\to
\Map^\sharp((\Delta^n)^\flat,Z)$, where $\cM(n,\sigma)^{\deg}\subseteq
\cM(n,\sigma)$ is the union of the images of $\cM(s)\colon \cM(n-1,\tau)\to
\cM(n,\sigma)$. These maps amalgamate with $\Phi(n,\sigma)\colon
\cN(n,\sigma)\to\Map^\sharp((\Delta^{n})^\flat,Z)$ into a map $\Omega'\colon
A\to Z$, where
 \[
 A=(\cN(n,\sigma)\cup \cM(n,\sigma)^{\deg})^\sharp\times (\Delta^{n})^{\flat}
 \coprod_{(\cN(n,\sigma)\cup \cM(n,\sigma)^{\deg})^\sharp\times(\partial\Delta^n)^{\flat}}
 \cM(n,\sigma)^\sharp \times (\partial\Delta^{n})^{\flat},
 \]
fitting into the commutative square
\[
 \xymatrix{
   A \ar@{^(->}[d]_i \ar[rr]^-{\Omega'}
   && Z
    \ar[d]^-{f} \\
   \cM(n,\sigma)^\sharp\times (\Delta^{n})^\flat \ar[rr]_-{\Psi(n,\sigma)}
   \ar@{..>}[rru]^-{\Omega(n,\sigma)}
   && T.   }
\]
It suffices to show that $i$ is a trivial cofibration in $\Mset$ with respect
to the Cartesian model structure, so that there exists a dotted arrow
rendering the above diagram commutative.

Let us first remark that for every epimorphism $s\colon (n',\sigma')\to
(n'',\sigma'')$ of $\del_{/K}$, the left square of the commutative diagram
\[\xymatrix{\cN(n'',\tau'')\ar[d]\ar[r]^{\cN(s)} & \cN(n',\tau')\ar[d]\ar[r]^{\cN(d)} &\cN(n'',\tau'')\ar[d]\\
\cM(n'',\tau'')\ar[r]^{\cM(s)} & \cM(n',\tau')\ar[r]^{\cM(d)} & \cM(n'',\tau''),}
\]
is a pullback by Lemma \ref{2le:retract} below. Here $d$ is a section of
$s$.

Next we prove that the map $\cN(n,\sigma)^{\deg}\to \cM(n,\sigma)^{\deg}$ is
anodyne, where $\cN(n,\sigma)^{\deg}\subseteq \cN(n,\sigma)$ is the union of
the images of $\cN(s)$. More generally, we claim that, for pairwise distinct
epimorphisms $s_1,\dots ,s_m$, where $s_j\colon (n,\sigma)\to (n-1,\tau_j)$,
the inclusion $\cN(\tau_1)\cup\dots \cup
\cN(\tau_m)\hookrightarrow\cM(\tau_1)\cup\dots \cup \cM(\tau_m)$ is anodyne.
Here $\cN(\tau_j)\subseteq \cN(n,\sigma)$ denotes the image of the split
monomorphism $\cN(s_j)$ and similarly for $\cM(\tau_j)$. We proceed by
induction on $m$ (simultaneously for all $n$). The case $m=0$ is trivial and
we assume $m\ge 1$. For $1\le j\le m-1$, form the pushout
\[\xymatrix{(n,\sigma)\ar[r]^{s_j}\ar[d]_{s_m} & (n-1,\tau_j)\ar[d]\\
(n-1,\tau_m)\ar[r]^{s'_j} & (n-2,\tau'_j).}
\]
By Lemma \ref{2le:absolute} below, we have $\cN(\tau_j)\cap
\cN(\tau_m)=\cN(\tau'_j)$, where $\cN(\tau'_j)$ denotes the image of
$\cN(s'_js_m)$. The same holds for $\cM$. It follows that we have the
following pushout square
\[\xymatrix{f_0\ar[r]\ar[d] & f_1\ar[d] \\ f_2\ar[r]&f_3}\]
in the category $(\Sset)^{[1]}$, where
\begin{alignat*}{2}
f_0\colon&&\cN(\tau'_1)\cup \dots\cup\cN(\tau'_{m-1})&\to\cM(\tau'_1)\cup \dots\cup\cM(\tau'_{m-1}),\\
f_1\colon&&\cN(\tau_m)&\to\cM(\tau_m),\\
f_2\colon&&\cN(\tau_1)\cup \dots\cup\cN(\tau_{m-1})&\to\cM(\tau_1)\cup \dots\cup\cM(\tau_{m-1}),\\
f_3\colon&&\cN(\tau_1)\cup \dots\cup\cN(\tau_m)&\to\cM(\tau_1)\cup \dots\cup\cM(\tau_m).
\end{alignat*}
By assumption, $f_1$ is anodyne. By induction hypothesis, $f_0$ and $f_2$
are anodyne. Since $\cN(\tau_m)\cap\cM(\tau'_j)=\cN(\tau'_j)$ by the remark
of the preceding paragraph, Lemma \ref{2le:pushout_anodyne} implies that
$f_3$ is anodyne.

By the remark again, we have $\cN(n,\sigma)\cap
\cM(n,\sigma)^{\deg}=\cN(n,\sigma)^{\deg}$. Thus the inclusion
$\cN(n,\sigma)\subseteq\cN(n,\sigma)\cup \cM(n,\sigma)^{\deg}$ is a pushout
of $\cN(n,\sigma)^{\deg}\subseteq \cM(n,\sigma)^{\deg}$, hence is anodyne.
By assumption, the inclusion $\cN(n,\sigma)\subseteq \cM(n,\sigma)$ is
anodyne. By the two-out-of-three property for weak equivalences, it follows
that the inclusion $\cN(n,\sigma)\cup \cM(n,\sigma)^{\deg}\subseteq
\cM(n,\sigma)$ is anodyne, and consequently the inclusion
$(\cN(n,\sigma)\cup
\cM(n,\sigma)^{\deg})^\sharp\subseteq\cM(n,\sigma)^\sharp$ is a trivial
cofibration in $\Mset$ (see Remark \ref{3re:Quillen} below). The lemma then
follows from the fact that trivial cofibrations in $\Mset$ are stable under
smash products with cofibrations \cite{Lu1}*{Corollary 3.1.4.3}.
\end{proof}

We say that a square in a category $\cC$ is an \emph{absolute pullback}
(resp.\ \emph{absolute pushout}) if every functor $F\colon \cC\to \cD$
carries the square to a pullback (resp.\  pushout) square in $\cD$.

\begin{lemma}\label{2le:retract}
Let $\cC$ be a category. Given a commutative diagram in $\cC$
\[\xymatrix{X\ar[d]_f\ar[r]^{s} & Y\ar[r]^r\ar[d]^{g} & X\ar[d]^f\\
X'\ar[r]^{s'} & Y'\ar[r]^{r'} & X'}
\]
in which both horizontal compositions are identities and $g$ is a
monomorphism, then the square on the left is a pullback square. In
particular, if $g$ is a split monomorphism, then the square on the left is
an absolute pullback.
\end{lemma}

\begin{proof}
The second assertion follows immediately from the first one. To show the
first assertion, let $a\colon W\to X'$ and $b\colon W\to Y$ be morphisms
satisfying $s'a=gb$. If $c\colon W\to X$ is a morphism satisfying $fc=a$ and
$sc=b$, then we have $c=rsc=rb$. Conversely, we have $f(rb)=r'gb=r's'a=a$
and $s(rb)=b$. The last equality follows from $gsrb=s'frb=s'a=gb$, since $g$
is a monomorphism.
\end{proof}

\begin{lemma}\label{2le:absolute}
In $\del_{/K}$, pushouts of epimorphisms by epimorphisms are absolute
pushouts.
\end{lemma}

In the case of $\del\simeq \del_{/\Delta^0}$ the lemma is \cite{JT}*{Theorem
1.2.1} (see also \cite{GZ}*{II.3.2}). The proof in the general case is
similar. We include a proof for completeness.

\begin{proof}
Factorizing epimorphisms into compositions of $s^n_i$'s (for the notation
see the beginning of Section \ref{1ss}), we are reduced to the case of the
pushout of $s^n_i$ by $s^n_j$, where $i\le j$. This case follows from Lemma
\ref{2le:retract} applied to the diagram
\[\xymatrix{(n,\tau)\ar[d]_{s^{n-1}_{j-1}}\ar[r]^{d^{n+1}_i} & (n+1,\sigma)\ar[r]^{s^n_i}\ar[d]^{s^n_j} & (n,\tau)\ar[d]^{s^{n-1}_{j-1}}\\
(n-1,\tau')\ar[r]^{d^n_i} &(n,\sigma')\ar[r]^{s^{n-1}_i} & (n-1,\tau')}
\]
for $i<j$, and to the diagram
\[\xymatrix{(n,\tau)\ar[d]_{\id} \ar[r]^{d^{n+1}_i} & (n+1,\sigma)\ar[d]^{s^n_i}\ar[r]^{s^n_i} & (n,\tau)\ar[d]^{\id}\\
(n,\tau)\ar[r]^{\id} & (n,\tau)\ar[r]^{\id} & (n,\tau)}
\]
for $i=j$.
\end{proof}

\begin{lemma}\label{2le:pushout_anodyne}
Consider a pushout square
\[
\xymatrix{f_0\ar[r]^u\ar[d] & f_1\ar[d] \\ f_2\ar[r]&f_3}
\]
in $(\Sset)^{[1]}$, where $f_i\colon Y_i\to X_i$. Assume that $f_0$, $f_1$,
$f_2$ are anodyne (resp.\ right anodyne) and the map $X_0\coprod_{Y_0} Y_1
\to X_1$ induced by $u$ is a monomorphism. Then $f_3$ is anodyne (resp.\
right anodyne).
\end{lemma}

\begin{proof}
The square corresponds to a cube in $\Sset$, which can be decomposed into a
commutative diagram
\[\xymatrix{Y_0\ar[rr]\ar[rd]^{f_0}\ar[dd]&&Y_1\ar@{=}[rr]\ar[rd]^{g_0}\ar@{-->}[dd] && Y_1\ar[rd]^{f_1}\ar@{-->}[dd]\\
&X_0\ar[dd]\ar[rr] && Z_0\ar[rr]^(.3){a_0}\ar[dd] && X_1\ar[dd]\\
Y_2\ar[rd]^{f_2}\ar@{-->}[rr] && Y_3\ar[rd]^{g_2}\ar@{==}[rr] && Y_3\ar@{-->}[rd]^{f_3}\\
&Y_2\ar[rr] && Z_2\ar[rr]^{a_2} && X_3,}
\]
where the top and bottom squares on the left are pushout squares, and the
front and back squares are pushout squares. For $i=0,2$, the map $g_i$ is a
pushout of $f_i$, hence is anodyne (resp.\ right anodyne). Since $f_1$ is
anodyne (resp.\ right anodyne) and $a_0$ is a monomorphism by assumption,
$a_0$ is anodyne by the two-out-of-three property of weak equivalences
(resp.\ right anodyne by \cite{Lu1}*{Proposition 4.1.1.3}). Thus the pushout
$a_2$ of $a_0$ is anodyne (resp.\  right anodyne). Therefore, $f_3=a_2g_2$
is anodyne (resp.\ right anodyne).
\end{proof}

We now give the form of the construction technique as used in Sections
\ref{4ss} and \ref{5ss}.

\begin{proposition}\label{1pr:extension}
Let $K$ be a simplicial set, $\cC$ an $\infty$-category, and $i\colon
A\hookrightarrow B$ a monomorphism of simplicial sets. Denote by $f\colon
\Fun(B,\cC)\to\Fun(A,\cC)$ the map induced by $i$. Let $\cN$ be an object of
$(\Sset)^{(\del_{/K})^{op}}$ such that $\cN(\sigma)$ is weakly contractible
for all $\sigma\in\del_{/K}$, and let $\Phi\colon \cN\to \Map[K,\Fun(B,\cC)]$
be a morphism such that $\Map[K,f]\circ\Phi\colon \cN\to \Map[K,\Fun(A,\cC)]$
factorizes through $\Delta^0_{(\del_{/K})^{op}}$ to give a functor $a\colon
K\to\Fun(A,\cC)$. Then there exists $b\colon K\to\Fun(B,\cC)$ such that
$b\circ p=a$ and for every map $g\colon K'\to K$ and every global section
$\nu\in \Gamma(g^*\cN)_0$, the maps $b\circ g$ and $g^*\Phi\circ\nu\colon
K'\to\Fun(B,\cC)$ are homotopic over $\Fun(A,\cC)$. Here $g^*\Phi\colon
g^*\cN\to g^*\Map[K,\Fun(B,\cC)]=\Map[K',\Fun(B,\cC)]$.
\end{proposition}

In the statement we have implicitly used isomorphisms provided by Remark
\ref{2re:functors} such as $\Map^\sharp(K,\Fun(A,\cC)^\natural)\simeq
\Gamma(\Map[K,\Fun(A,\cC)])$.

\begin{proof}
Since $\Fun(-,\cC)^\natural=(\cC^\natural)^{(-)^\flat}$, the map
$f^\natural\colon \Fun(B,\cC)^\natural\to \Fun(A,\cC)^\natural$ is a
fibration in $\Mset$ for the Cartesian model structure by Lemma
\ref{1le:marked_fibration} below. Thus by Proposition
\ref{1le:injective_fibration}, $\Map[K,f^\natural]$ is an injective
fibration. We let $\Map[K,f^{\natural}]_a$ denote the fiber of
$\Map[K,f^{\natural}]$ at $a$, which is injectively fibrant. By Lemma
\ref{1le:component} (1), $\Gamma_\Phi(\Map[K,f^{\natural}]_a)$ is a
(nonempty) connected component of $\Gamma(\Map[K,f^{\natural}]_a)$. Note that
$\Gamma(\Map[K,f^\natural]_a)$ is the fiber of
$\Gamma(\Map[K,\Fun(B,\cC)])\to \Gamma(\Map[K,\Fun(A,\cC)])$ at $a$. Any
vertex of $\Gamma_\Phi(\Map[K,f^{\natural}]_a)$ then provides the desired
$b$. Indeed, for given $g$ and $\nu$, both $b\circ g$ and $g^*\Phi\circ\nu$
are given by vertices of the connected Kan complex
$\Gamma_{g^*\Phi}(\Map[K',f^\natural]_{g^*a})$, which are necessarily
equivalent.
\end{proof}

\begin{lemma}\label{1le:marked_fibration}
Let $X\to Y$ be a fibration and $i\colon A\to B$ be a cofibration in $\Mset$
with respect to the Cartesian model structure. Then the induced map
\[X^B\to X^A\times_{Y^A} Y^B\]
is a fibration in $\Mset$ with respect to the Cartesian model structure.
\end{lemma}

\begin{proof}
This follows immediately from the fact that trivial cofibrations in $\Mset$
are stable under smash product with cofibrations \cite{Lu1}*{Corollary
3.1.4.3}.
\end{proof}

\section{Restricted multisimplicial nerves}
\label{3ss}

In this section, we introduce several notions related to multisimplicial
sets. The restricted multisimplicial nerve (Definition
\ref{3de:restricted_nerve}) of a multi-tiled simplicial set (Definition
\ref{3de:multitiled_simplicial}) will play an essential role in the
statements of our theorems.

\begin{definition}[Multisimplicial set]\label{3de:multisimplicial}
Let $I$ be a set. We define the category of \emph{$I$-simplicial sets} to be
$\Sset[I]\colonequals\Fun((\del^I)^{op},\Set)$, where
$\del^I\colonequals\Fun(I,\del)$. For an integer $k\ge 0$, we define the
category of $k$-simplicial sets to be $\Sset[k]\colonequals\Sset[I]$, where
$I=\{1,\dots,k\}$. We identify $\Sset[1]$ with $\Sset$.
\end{definition}

We denote by $\Delta^{n_i\mid i\in I}$ the $I$-simplicial set represented by
the object $([n_i])_{i\in I}$ of $\del^I$. For an $I$-simplicial set $S$, we
denote by $S_{n_i\mid i\in I}$ the value of $S$ at the object $([n_i])_{i\in
I}$ of $\del^I$. An $(n_i)_{i\in I}$-\emph{simplex} of an $I$-simplicial set
$S$ is an element of $S_{n_i\mid i\in I}$. By Yoneda's lemma, there is a
canonical bijection between the set $S_{n_i\mid i\in I}$ and the set of maps
from $\Delta^{n_i\mid i\in I}$ to $S$.

For $J\subseteq I$, composition with the partial opposite functor $\del^I\to
\del^I$ sending $(\dots,P_{j'},\dots,P_j,\dots)$ to
$(\dots,P_{j'},\dots,P_j^{op},\dots)$ (taking $op$ for $P_j$ when $j\in J$)
defines a functor $\op^I_J\colon \Sset[I]\to \Sset[I]$. We put
$\Delta^{n_i\res i\in I}_J=\op^I_J\Delta^{n_i\res i\in I}$. Although
$\Delta^{n_i\res i\in I}_J$ is isomorphic to $\Delta^{n_i\res i\in I}$, it
will be useful in specifying the variance of many constructions. When
$J=\emptyset$, $\op^I_\emptyset$ is the identity functor so that
$\Delta^{n_i\res i\in I}_\emptyset=\Delta^{n_i\res i\in I}$.

\begin{remark}
The category $\Sset[I]$ is Cartesian-closed. For two $I$-simplicial sets
$X,Y$, the internal mapping object $\Map(Y,X)$ is an $I$-simplicial set such
that $\Hom_{\Sset[I]}(Z,\Map(Y,X))\simeq \Hom_{\Sset[I]}(Z\times Y,X)$ for
every $Z\in\Sset[I]$. We have $\op^I_J \Map(Y,X)\simeq \Map(\op^I_J
Y,\op^I_J X)$.
\end{remark}

\begin{definition}
Let $I,J$ be two sets.
\begin{enumerate}
  \item Let $f\colon J\to I$ be a map of sets. Composition with $f$
      defines a functor $\del^f\colon \del^I\to \del^J$. Composition with
      $(\del^f)^{op}$ induces a functor $(\del^f)^*\colon \Sset[J]\to
      \Sset[I]$, which has a right adjoint $(\del^f)_*\colon \Sset[I]\to
      \Sset[J]$. We will now look at two special cases.

  \item Let $f\colon J\to I$ be an injective map. The functor $\del^f$ has
      a right adjoint $c_f\colon \del^J\to \del^I$ given by $c_f(F)_i=F_j$
      if $f(j)=i$ and $c_f(F)_i=[0]$ if $i$ is not in the image of $f$.
      The functor $(\del^f)_*$ can be identified with the functor
      $\epsilon^f$ induced by composition with $(c_f)^{op}$. If
      $J=\{1,\dots,k'\}$, we write $\epsilon^I_{f(1)\dotsm f(k')}$ for
      $\epsilon^f$.

  \item Consider the map $f\colon I\to \{1\}$. Then
      $\delta_I\colonequals\del^f\colon \del\to\del^I$ is the diagonal
      functor, and composition with $(\delta_I)^{op}$ induces the
      \emph{diagonal functor} $\delta^*_I=(\del^f)^*\colon \Sset[I]\to
      \Sset$. We define
      \[\Delta^{[n_i]_{i\in I}}\colonequals
      \delta_I^*\Delta^{n_i\res i\in I} =\prod_{i\in I}\Delta^{n_i}.
      \] We define the \emph{multisimplicial nerve} functor to be the right
      adjoint $\delta^I_*\colon \Sset\to\Sset[I]$ of $\delta^*_I$. An
      $(n_i)_{i\in I}$-simplex of $\delta^I_*X$ is given by a map
      $\Delta^{[n_i]_{i\in I}}\to X$.

  \item For $J\subseteq I$, we define the \emph{twisted diagonal functor}
      $\delta^*_{I,J}$ as $\delta^*_I \circ \op^I_J\colon \Sset[I]\to
      \Sset$. We define
      \begin{align*}
      \Delta^{[n_i]_{i\in I}}_J\colonequals \delta_{I,J}^*\Delta^{n_i\res i\in I}=\delta_I^*\Delta^{n_i\res i\in I}_J
      =\left(\prod_{i\in I-J}\Delta^{n_i}\right)\times\left(\prod_{j\in J}(\Delta^{n_j})^{op}\right).
      \end{align*}
      When $J=\emptyset$, we have $\delta^*_{I,\emptyset}=\delta^*_I$ and
      $\Delta^{[n_i]_{i\in
     I}}_{\emptyset}=\Delta^{[n_i]_{i\in I}}$.
\end{enumerate}
\end{definition}

When $I=\{1,\dots,k\}$, we write $k$ instead of $I$ in the previous notation.
For example, in (2) we have $(\epsilon^k_jK)_n=K_{0,\dots ,n,\dots, 0}$,
where $n$ is at the $j$-th position and all other indices are $0$. In (3) we
have $\delta^*_k\colon \Sset[k]\to\Sset$ defined by
$(\delta^*_kX)_n=X_{n,\dots, n}$.

\begin{remark}\label{3re:eps}
For any map $f\colon J\to I$, we have $\del^f\circ \delta_I=\delta_J$, so
that $(\del^f)_*\circ \delta^I_*\simeq \delta^J_*$. In particular, for $f$
injective, we have $\epsilon^f\circ \delta^I_*\simeq \delta^J_*$. For
$\alpha\in I$, we have $\epsilon^I_\alpha\circ \delta^I_*\simeq \id_{\Sset}$.
\end{remark}

\begin{remark}
For $f\colon J\to I$ injective, we have $\del^f\circ c_f=\id_{\del^J}$, so
that $\epsilon^f\circ (\del^f)^*=\id_{\Sset[J]}$. The counit transformation
$(\del^f)^* \circ \epsilon^f\to \id_{\Sset[I]}$ is a monomorphism. Indeed,
for each object $P$ of $\del^I$, the unit morphism $P\to (c_f\circ \del^f)
(P)$ admits a section. Applying the functor $\delta_I^*$, we obtain a
monomorphism $\delta_J^*\circ \epsilon^f\to \delta_I^*$.
\end{remark}

\begin{remark}\label{3re:adjunction}
For every map $f\colon J\to I$, the adjunction formula for presheaves
provides a canonical isomorphism
\[\Map(Y,(\del^f)_*X)\simeq (\del^f)_*\Map((\del^f)^*Y,X)\]
for every $I$-simplicial set $X$ and every $J$-simplicial set $Y$. This map
is the composite map
\[\Map(Y,(\del^f)_*X)\xto{(\del^f)^*}(\del^f)_*\Map((\del^f)^*Y,(\del^f)^*(\del^f)_*X)\to (\del^f)_*\Map((\del^f)^*Y,X),\]
where the second map is induced by the counit map $(\del^f)^*(\del^f)_*X\to
X$.

Specializing to the case of $\delta_I$ and applying the functor
$\epsilon^I_\alpha$, where $\alpha\in I$, we get an isomorphism
\[\epsilon^I_\alpha\Map(X,\delta^I_*S)\simeq \Map(\delta_I^*X,S)\]
for every $I$-simplicial set $X$ and every simplicial set $S$, which is the
composite map
\[\epsilon^I_\alpha\Map(X,\delta^I_*S)\xto{\delta_I^*} \Map(\delta_I^*X,\delta_I^*\delta^I_*S)\to\Map(\delta_I^*X,S).\]
\end{remark}

\begin{definition}[Exterior product]
Let $I=\coprod_{j\in J}I_j$ be a partition. We define a functor
\[\boxtimes_{j\in J}\colon \prod_{j\in J}\Sset[I_j]\to
\Sset[I]
\] by the formula $\boxtimes_{j\in J}S^j=\prod_{j\in J}(\del^{\iota_j})^* S^j$,
where $\iota_j\colon I_j\hookrightarrow I$ is the inclusion. For
$J=\{1,\dots,m\}$, $I_j=\{1,\dots,k_j\}$, we define
\[
-\boxtimes\dots \boxtimes -\colon \Sset[k_1]\times\dots \times\Sset[k_m]\to\Sset[k].
\] by $(S^1\boxtimes \dots \boxtimes S^m)_{n^1_1,\dots,n^1_{k_1},\dots,
n^m_{1},\dots, n^m_{k_m}}=S^1_{n^1_1,\dots,n^1_{k_1}}\times \dots \times
S^m_{n^m_{1},\dots,n^m_{k_m}}$.
\end{definition}

We have the isomorphisms $\boxtimes_{i\in I}\Delta^{n_i}\simeq\Delta^{n_i\mid
i\in I}$ and $\delta_I^*\boxtimes_{j\in J}S^j\simeq \prod_{j\in J}
\delta_{I_j}^*S^j$.

\begin{remark}
For a map $f\colon J\to I$, we have $(\del^f)^*\Delta^{n_i\mid i\in
I}\simeq\boxtimes_{i\in I}\Delta^{[n_j]_{j\in f^{-1}(i)}}$, so that an
$(n_j)_{j\in J}$-simplex of $(\del^f)_* X$ is given by a map
$\boxtimes_{i\in I} \Delta^{[n_j]_{j\in f^{-1}(i)}}\to X$.
\end{remark}

We next turn to restricted variants of the multisimplicial nerve functor
$\delta^I_*$. We start with restrictions on edges.

\begin{definition}[Multi-marked simplicial set]\label{3de:multimarked_simplicial}
An \emph{$I$-marked simplicial set} (resp.\ \emph{$I$-marked
$\infty$-category}) is the data $(X,\cE=\{\cE_i\}_{i\in I})$, where $X$ is a
simplicial set (resp. an $\infty$-category) and, for all $i\in I$, $\cE_i$
is a set of edges of $X$ which contains every degenerate edge. The data
$\cE$ is sometimes called an \emph{$I$-marking} on $X$. A morphism $f\colon
(X,\{\cE_i\}_{i\in I})\to (X',\{\cE'_i\}_{i\in I})$ of $I$-marked simplicial
sets is a map $f\colon X\to X'$ having the property that $f(\cE_i)\subseteq
\cE'_i$ for all $i\in I$. We denote the category of $I$-marked simplicial
sets by $\Sset^{I+}$. It is the strict fiber product of $I$ copies of
$\Mset$ over $\Sset$.

For a simplicial set $X$ and a subset $J\subseteq I$, we define an
$I$-marked simplicial set $X^{\sharp^I_J} =(X,\cE)$ by $(X,\cE_j)=X^\sharp$
for $j\in J$ and $(X,\cE_i)=X^\flat$ for $i\in I-J$. We write
$X^{\sharp^I}=X^{\sharp^I_I}$ and $X^{\flat^I}=X^{\sharp^I_\emptyset}$. The
functor $\Sset\to \Sset^{I+}$ carrying $X$ to $X^{\sharp^I}$ (resp.\
$X^{\flat^I}$) is a right (resp.\ left) adjoint of the forgetful functor
$\Sset^{I+}\to \Sset$.
\end{definition}

Consider the functor $\delta_{I+}^*\colon \Sset[I]\to \Sset^{I+}$ sending $S$
to $(\delta^*_I S,\{\cE_i\}_{i\in I})$, where $\cE_i$ is the set of edges of
$\epsilon^I_i S\subseteq\delta_I^*S$. This functor admits a right adjoint
$\delta^{I+}_*\colon \Sset^{I+}\to \Sset[I]$. Since
$\delta_{I+}^*\Delta^{n_i\mid i\in I}=\prod_{i\in
I}(\Delta^{n_i})^{\sharp^I_{\{i\}}}$, the functor $\delta^{I+}_*$ carries
$(X,\{\cE_i\}_{i\in I})$ to the $I$-simplicial subset of $\delta^I_* X$ whose
$(n_i)_{i\in I}$-simplices are maps $\Delta^{[n_i]_{i\in I}}\to X$ such that
for every $j\in I$ and every map $\Delta^1\to \epsilon^I_j\Delta^{n_i\res
i\in I}$, the composition
\[\Delta^1\to\epsilon^I_j\Delta^{n_i\res i\in I} \to\Delta^{[n_i]_{i\in I}} \to X\]
is in $\cE_j$. We have $\delta^I_*(X)=\delta^{I+}_*(X^{\sharp I})$. When
$I=\{1,\dots,k\}$, we use the notation $\Sset^{k+}$ \footnote{In particular,
$\Sset^{2+}$ in our notation is $\Sset^{++}$ in \cite{Lu2}*{Definition
4.7.5.2}.}, $\delta_{k+}^*$ and $\delta^{k+}_*$.

\begin{definition}[Restricted multisimplicial nerve]
We define the \emph{restricted $I$-simplicial nerve} of an $I$-marked
simplicial set $(X,\cE=\{\cE_i\}_{i\in I})$ to be the $I$-simplicial set
\[X_{\cE}=X_{\{\cE_i\}_{i\in I}}\colonequals\delta^{I+}_*(X,\{\cE_i\}_{i\in I}).\]
In particular, for any marked simplicial set $(X,\cE)$, the simplicial set
$X_\cE$ is the simplicial subset of $X$ spanned by the edges in $\cE$.
\end{definition}

\begin{remark}\label{3re:Quillen}
The functor $\delta^*_{1+}\colon \Sset\to \Mset$ carries $S$ to $S^\sharp$.
The functor $\delta^{1+}_*\colon \Mset\to \Sset$ carries $(X,\cE)$ to the
simplicial subset of $X$ consisting of all simplices whose edges are all
marked edges. In other words, $X_\cE=\delta^{1+}_*(X,\cE)$ is the largest
simplicial subset $S\subseteq X$ such that $S^\sharp \subseteq (X,\cE)$. We
have $\delta^{1+}_*\simeq \Map^\sharp((\Delta^0)^\flat,-)$. For objects $X$
and $Y$ of $\Mset$, we have $\Map^\sharp(X,Y)=\delta^{1+}_*(Y^X)$.

The pair $(\delta^*_{1+},\delta^{1+}_*)$ is a Quillen adjunction for the Kan
model structure on $\Sset$ and the Cartesian model structure on $\Mset$.
This is a special case of Remark \ref{2re:Quillen} but we can also check
this easily as follows. Clearly $\delta^*_{1+}$ preserves cofibrations. To
see that it also preserves trivial cofibrations, note that for any anodyne
map of simplicial sets $T\to S$ and any $\infty$-category $\cC$, the induced
map $\Map^\sharp(S^\sharp,\cC^\natural)\to
\Map^\sharp(T^\sharp,\cC^\natural)$ is a trivial Kan fibration.
\end{remark}

Next we consider restrictions on squares. By a \emph{square} of a simplicial
set $X$, we mean a map $\Delta^1\times \Delta^1 \to X$. The transpose of a
square is obtained by swapping the two $\Delta^1$'s. Composition with the
maps $\id \times d^1_0,\id \times d^1_1\colon \Delta^1\simeq \Delta^1\times
\Delta^0 \to \Delta^1\times \Delta^1$ induce maps $\Hom(\Delta^1\times
\Delta^1,X)\to X_1$ and composition with the map $\id\times s^0_0 \colon
\Delta^1\times \Delta^1\to \Delta^1\times \Delta^0\simeq \Delta^1$ induces a
map $X_1\to \Hom(\Delta^1\times \Delta^1,X)$.

\begin{definition}[Multi-tiled simplicial set]\label{3de:multitiled_simplicial}
An \emph{$I$-tiled simplicial set} (resp.\ \emph{$I$-tiled
$\infty$-category}) is the data $(X,\cE=\{\cE_i\}_{i\in
I},\cQ=\{\cQ_{ij}\}_{i,j\in I, i\neq j})$, where $(X,\cE)$ is an $I$-marked
simplicial set (resp.\ $\infty$-category) and, for all $i,j\in I$, $i\neq
j$, $\cQ_{ij}$ is a set of squares of $X$ such that $\cQ_{ij}$ and
$\cQ_{ji}$ are obtained from each other by transposition of squares, and
$\id \times d^1_0$, $\id \times d^1_1$ induce maps $\cQ_{ij}\to \cE_i$, and
$\id \times s^0_0$ induces $\cE_i\to \cQ_{ij}$. A morphism $f\colon
(X,\cE,\cQ)\to (X',\cE',\cQ')$ of $I$-tiled simplicial sets is a map
$f\colon X\to X'$ having the property that $f(\cE_i)\subseteq f(\cE'_i)$ and
$f(\cQ_{ij})\subseteq\cQ'_{ij}$ for all $i,j$. We denote the category of
$I$-tiled simplicial sets by $\Sset^{I\square}$. The data $\cT=(\cE,\cQ)$ is
sometimes called an \emph{$I$-tiling} on $X$. For brevity, we adopt the
conventions $\cT_i=\cE_i$ and $\cT_{ij}=\cQ_{ij}$.
\end{definition}

\begin{remark}\label{3re:multitile}
Note that $\cE_i$ is the image of $\cQ_{ij}$ under either of the maps
$\cQ_{ij}\to \cE_i$ given by $\id\times d^1_0$ and $\id\times d^1_1$.
Moreover, $f(\cQ_{ij})\subseteq \cQ'_{ij}$ implies $f(\cE_i)\subseteq
\cE'_i$ and $f(\cE_j)\subseteq \cE'_j$.
\end{remark}

Consider the functor $\delta_{I\square}^*\colon \Sset[I]\to
\Sset^{I\square}$ carrying $S$ to $(\delta^*_{I+} S,\cQ)$, where $\cQ_{ij}$
is the image of the injection
\[(\epsilon^I_{ij}S)_{11}=\Hom_{\Sset[2]}(\Delta^{1,1},\epsilon^I_{ij}S)\xto{\delta^*_2}
\Hom_{\Sset}(\Delta^1\times \Delta^1,\delta^*_2\epsilon^I_{ij}S)\subseteq \Hom_{\Sset}(\Delta^1\times \Delta^1,\delta^*_I S).\]
This functor admits
a right adjoint $\delta^{I\square}_*\colon \Sset^{I\square}\to \Sset[I]$
carrying $(X,\cE,\cQ)$ to the $I$-simplicial subset of $\delta^{I+}_*
(X,\cE)\subseteq \delta^I_* X$ whose $(n_i)_{i\in I}$-simplices are maps
$\Delta^{[n_i]_{i\in I}}\to X$ satisfying the additional condition that for
every pair of elements $j,k\in I$, $j\neq k$, and every map
$\Delta^1\boxtimes \Delta^1\to \epsilon^I_{jk}\Delta^{n_i\res i\in I}$, the
composition
\[\Delta^1\times \Delta^1\to\delta_2^*\epsilon^I_{jk}\Delta^{n_i\res i\in I}\to\Delta^{[n_i]_{i\in I}} \to X\]
is in $\cQ_{jk}$. When $I=\{1,\dots,k\}$, we use the notation
$\Sset^{k\square}$, $\delta^*_{k\square}$, $\delta_*^{k\square}$.

\begin{definition}[Restricted multisimplicial nerve]\label{3de:restricted_nerve}
We define the \emph{restricted $I$-simplicial nerve} of an $I$-tiled
simplicial set $(X,\cT)$ to be the $I$-simplicial set
$\delta_*^{I\square}(X,\cT)$.
\end{definition}

\begin{notation}\label{3no:cartesian}
The underlying functor $\sfU\colon \Sset^{I\square}\to \Sset^{I+}$ carrying
$(X,\cE,\cQ)$ to $(X,\cE)$ admits a left adjoint $\sfV\colon \Sset^{I+}\to
\Sset^{I\square}$ and a right adjoint $\sfW\colon \Sset^{I+}\to
\Sset^{I\square}$, which can be described as follows.
\begin{itemize}
  \item We have $\sfV(X,\cE)=(X,\cE,\cQ)$, where $\cQ_{ij}$ is the union
      of the image of $\cE_i$ under $-\circ(\id \times s_0^0)$ and the
      image of $\cE_j$ under $-\circ (s_0^0\times \id)$.
  \item For sets of edges $\cE_1$ and $\cE_2$ of $X$, we denote by
      $\cE_1*_X\cE_2$ the set of squares $f\colon \Delta^1\times
      \Delta^1\to X$
      \[\xymatrix{f(0,0)\ar[r] \ar[d] &
      f(0,1)\ar[d]\\f(1,0)\ar[r] & f(1,1)}\] such that the vertical edges
      $f\circ (\id \times d^1_\alpha)$, $\alpha=0,1$ belong to $\cE_1$
      and the horizontal edges $f\circ (d^1_\alpha\times \id)$,
      $\alpha=0,1$ belong to $\cE_2$. We have $\sfW(X,\cE)=(X,\cE,\cQ)$,
      where $\cQ_{ij}=\cE_i*_X\cE_j$.
\end{itemize}
We have $\delta^*_{I+}\simeq\sfU\circ\delta^*_{I\square}$ and
$\delta_*^{I+}\simeq \delta_*^{I\square}\circ\sfW$.
\end{notation}

\begin{definition}[Cartesian multisimplicial nerve]\label{3de:cartesian_nerve}
If $\cC$ is an $\infty$-category and
$\cE_1$, $\cE_2$ are sets of edges of $\cC$, we denote by
$\cE_1*^\cart_{\cC}\cE_2$ the subset of $\cE_1*_{\cC}\cE_2$ consisting of
Cartesian squares. For an $I$-marked $\infty$-category $(\cC,\cE)$, we denote
by $(\cC,\cE^\cart)$ the $I$-tiled $\infty$-category such that
$\cE^\cart_i=\cE_i$ for $i\in I$ and $\cE^\cart_{ij}=\cE_i*_{\cC}^\cart\cE_j$
for $i,j\in I$ and $i\neq j$. We define the \emph{Cartesian $I$-simplicial
nerve} of an $I$-marked $\infty$-category $(\cC,\cE)$ to be
\[\cC^\cart_{\cE}\colonequals\delta^{I\square}_*(\cC,\cE^\cart).\]
\end{definition}

For reference in later sections, we define a few properties of sets of edges
and squares. As in the definition of marked simplicial sets, we are mainly
interested in those sets of edges that contain all degenerate edges.
However, many sets of squares of interest, when regarded as sets of edges in
suitable simplicial sets, do not contain all degenerate edges. For this
reason, we allow sets of edges not containing all degenerate edges in the
definitions below.

\begin{definition}\label{3de:edges}
Let $X$ be a simplicial set, and let $\cE$ be a set of edges of $X$. We say
that $\cE$ is
\begin{enumerate}
    \item \emph{composable} if every map $\Lambda^2_1\to X$ whose
        restrictions to $\Delta^{\{0,1\}}$ and to $\Delta^{\{1,2\}}$ are
        in $\cE$ extends to a $2$-simplex $\Delta^2\to X$ whose
        restriction to $\Delta^{\{0,2\}}$ is in $\cE$.
    \item \emph{stable under composition} for every $2$-simplex $\sigma$
        of $X$ such that $\sigma\circ d^2_0, \sigma\circ d^2_2\in \cE$,
        we have $\sigma\circ d^2_1\in \cE$.
\end{enumerate}
\end{definition}

If $\cE$ contains every degenerate edge, then (1) above is equivalent to
every one of the following conditions
\begin{itemize}
   \item $(X,\cE)$ has the extension property with respect to the
       inclusion $(\Lambda^2_1)^\sharp\subseteq (\Delta^2)^\sharp$;

   \item $X_\cE$ has the extension property with respect to the inclusion
       $\Lambda^2_1\subseteq \Delta^2$;
\end{itemize}
and (2) above is equivalent to every one of the following conditions
\begin{itemize}
   \item $(X,\cE)$ has the extension property with respect to the
       inclusion
       \[(\Lambda^2_1)^{\sharp}\coprod_{(\Lambda^2_1)^{\flat}}
       (\Delta^2)^{\flat}\subseteq (\Delta^2)^{\sharp};\]

   \item $X_\cE\to X$ has the right lifting property with respect to the
       inclusion $\Lambda^2_1\subseteq \Delta^2$;

   \item $X_\cE\to X$ is an inner fibration.
\end{itemize}
If $X$ has the extension property with respect to $\Lambda^2_1\subseteq
\Delta^2$, then (2) implies (1).

\begin{definition}\label{3de:admissible_edge}
Let $\cC$ be an $\infty$-category and let $\cE$, $\cF$ be two sets of edges
of $\cC$. We say that $\cE$ is
\begin{enumerate}
   \item \emph{stable under homotopy} if for $e\in\cE$ and $f\in\cC_1$
       that have the same image in $\H\cC$, we have $f\in\cE$;

   \item \emph{stable under equivalence} if for $e\in\cE$ and $f\in\cC_1$
       that are equivalent as objects of $\Fun(\Delta^1,\cC)$, we have
       $f\in\cE$;

   \item \emph{stable under pullback by $\cF$} if for every Cartesian
       square in $\cC$ of the form
      \begin{align*}
      \xymatrix{
      y' \ar[d]_{e'} \ar[r] & y \ar[d]^e \\
      x' \ar[r]^f & x   }
      \end{align*}
      with $e\in\cE$ and $f\in\cF$, we have $e'\in\cE$;

   \item \emph{stable under pullback} (see \cite{Lu1}*{Notation 6.1.3.4})
       if it is stable under pullback by $\cC_1$;

   \item \emph{admissible} if $\cE$ contains every degenerate edge of
       $\cC$, is stable under pullback, and for every $2$-simplex of
       $\cC$ of the form
      \begin{equation}\label{3eq:2cell}
      \xymatrix{&y\ar[rd]^p \\ z\ar[ru]^q\ar[rr]^r &&
      x}
\end{equation}
       with $p\in\cE$, we have $q\in\cE$ if and only if $r\in\cE$.
\end{enumerate}
\end{definition}

In the above definition, (5) implies (4); (4) implies (3); (2) implies (1).
Moreover, if $\cF$ contains every edge of $\cC$ that is an equivalence
(resp.\ degenerate), then (3) implies (2) (resp.\ (1)). If $\cE$ satisfies
(3) with $\cE$ and $\cF$ each containing all degenerate edges of $\cC$, then
$\cE$ contains all equivalences of $\cC$. The last condition in (5) is
equivalent to saying that $X_\cE\to X$ is a right fibration
\cite{Lu1}*{Definition 2.0.0.3}.

\begin{remark}\label{3re:admissible}
If $\cC$ admits pullbacks, then $\cE$ is admissible if and only if it
contains every degenerate edge of $\cC$ and is stable under composition,
pullback, and taking diagonal in $\cC$. The ``only if'' part is clear. For
the ``if'' part, note that in the $2$-simplex \eqref{3eq:2cell}, $q$ is a
composition of
\[z\to z\times_x y \to y,\]
where the first morphism is a pullback of the diagonal $y\to y\times_x y$ of
$p$ and the second morphism is a pullback of $r$ by $p$. Indeed, we have a
diagram with pullback squares
\[\xymatrix{z\ar[r]^q\ar[d] & y\ar[d]\\
z\times_x y\ar[r]\ar[d] & y\times_x y \ar[r]\ar[d] & y\ar[d]^p\\
z\ar[r]^q & y\ar[r]^p & x.}
\]
\end{remark}

In an $\infty$-category $\cC$, a set of edges $\cE$ is composable if and
only if its image in $\H\cC$ is stable under composition. Thus if $\cE$ is
composable and stable under homotopy, then $\cE$ is stable under
composition. The converse holds if $\cE$ contains every degenerate edge. In
the next section, we will need the following extension property of
composable sets of edges.

\begin{lemma}\label{3le:comp}
Let $I$ be a set. Let $(B,\cF)$ be an $I$-marked simplicial set and
$(\cC,\cE)$ an $I$-marked $\infty$-category.  Let $A\subseteq B$ be a
categorical equivalence such that for each $i\in I$, $\cF_i$ is contained in
the smallest set of edges of $B$ containing $\cG_i=A_1\cap \cF_i$ and stable
under composition. Assume $\cE_i$ composable for all $i\in I$ and $\cF_i\cap
\cF_j\subseteq A_1$ for all $i,j\in I$, $i\neq j$. Then $(\cC,\cE)$ has the
extension property with respect to $(A,\cG)\subseteq (B,\cF)$.
\end{lemma}

\begin{proof}
Let $f\colon (A,\cG)\to (\cC,\cE)$ be a map of $I$-marked simplicial sets.
Choose an extension $g\colon B\to \cC$ of $f$. For each $i\in I$, let
$\cE'_i$ denote the set of edges of $\cC$ that are homotopic to some edge of
$\cE_i$. Then $\cE'_i$ is stable under composition, and hence so is its
inverse image under $g$. Thus $g$ induces $(B,\cF)\to(\cC,\cE')$. Let
$D\subseteq B$ be the union of $A$ and the edges in $\cF$. We construct a map
$h_0\colon (D,\cF)\to (\cC,\cE)$ extending $f$ and a natural equivalence
$g\res D\to h_0$ extending $\id_f$, by choosing for each edge $e$ in $\cF_i$
but not in $A_1$, a homotopy from $g(e)$ to an edge $h_0(e)$ in $\cE_i$. By
\cite{Lu1}*{Lemma 2.4.6.3}, $h_0$ extends to a map $(B,\cF)\to (\cC,\cE)$, as
desired.
\end{proof}

\begin{definition}\label{3de:squares}
For a simplicial set $X$, the map $\Hom(\Delta^1\times \Delta^1,
X)\xrightarrow{\sim}\Hom(\Delta^1, \Map(\Delta^1,X))$ carrying $f$ to
$a\mapsto (b\mapsto f(a,b))$ (resp.\ $a\mapsto (b\mapsto f(b,a))$) is an
isomorphism.
\begin{enumerate}
  \item We say that a set of squares $\cQ$ of $X$ is \emph{stable under
      composition in the first (resp.\ second) direction} if the resulting
      set of edges of $\Map(\Delta^1,X)$ is stable under composition.
\end{enumerate}
Now let $\cQ$ and $\cQ'$ be sets of squares of an $\infty$-category $\cC$.
\begin{enumerate}\setcounter{enumi}{1}
  \item We say that $\cQ$ is \emph{stable under equivalence} if $\cQ$,
      when viewed as a set of edges of $\Map(\Delta^1,\cC)$ via the above
      isomorphism, is stable under equivalence.

  \item We say that $\cQ$ is \emph{stable under pullback by $\cQ'$ in the
      first (resp.\ second) direction}, if $\cQ$ is stable under pullback
      by $\cQ'$ in $\Map(\Delta^1,\cC)$, where $\cQ$ and $\cQ'$ are
      viewed as sets of edges via the above isomorphism.

  \item We say that $\cQ$ is \emph{stable under pullback in the first
      (resp.\ second) direction} if (3) holds for
      $\cQ'=\Fun(\Delta^1\times \Delta^1,\cC)$, the set of all squares of
      $\cC$.
\end{enumerate}
\end{definition}

By \cite{Lu1}*{Corollary 5.1.2.3}, condition (3) means that for any cube in
$\cC$ of the form
\begin{equation}\label{3eq:cube}
\xymatrix{y'(0)\ar[rr]\ar[rd]\ar[dd]&&y(0)\ar[rd]\ar@{-->}[dd]\\
&y'(1)\ar[dd]\ar[rr] && y(1)\ar[dd]\\
x'(0)\ar[rd]\ar@{-->}[rr] && x(0)\ar@{-->}[rd]\\
&x'(1)\ar[rr] && x(1),}
\end{equation}
such that the front and back squares are pullback, such that the right square
is in $\cQ$, and such that the bottom square is in $\cQ'$, the left square is
in $\cQ$. Here we interpret the horizontal and vertical arrows as in the
first (resp.\ second) direction and the oblique arrows as in the other
direction.

\begin{lemma}\label{3le:cart}
Let $\cC$ be an $\infty$-category. Let $\cQ^\cart$ be the set of all
pullback squares of $\cC$. Then the image of $\cQ^\cart$ under each of the
two isomorphisms in Definition \ref{3de:squares} is an admissible set of
edges. In particular, $\cQ^\cart$ is stable under equivalence, stable under
composition in both directions, and stable under pullback in both
directions.
\end{lemma}

\begin{proof}
The last condition in the definition of admissibility is \cite{Lu1}*{Lemma
4.4.2.1}. It remains to show the stability under pullback. Consider a cube
of the form \eqref{3eq:cube} in which the front, back, and right squares are
pullback. By the ``if'' part of (1), the square with vertices $y'(0)$,
$y(1)$, $x'(0)$, $x(1)$ is a pullback square. We conclude by the ``only if''
part of (1).
\end{proof}

\begin{remark}\label{3re:cart_square}
Let $\cC$ be an $\infty$-category and let $\cE_1$, $\cE_2$, $\cE_3$ be sets
of edges of $\cC$. Lemma \ref{3le:cart} has the following consequences.
\begin{enumerate}
  \item If $\cE_1$ is stable under composition, then
      $\cE_1*^\cart_\cC\cE_2$ is stable under composition in the first
      direction.

  \item If $\cE_2$ and $\cE_3$ are stable under pullback by $\cE_1$, then
      $\cE_2*_\cC\cE_3$ and $\cE_2*_\cC^\cart\cE_3$ are stable under
      pullback by $\cE_1*^\cart_\cC \cE_3$ in the first direction.

  \item If $\cE_3$ is stable under pullback by $\cE_2$, and $\cE_2$ is
      stable under pullback by $\cE_1$, then $\cE_2*_\cC^\cart\cE_3$ is
      stable under pullback by $\cE_1*_\cC \cE_3$ (and, in particular, by
      $\cE_1*_\cC^\cart \cE_3$) in the first direction.
\end{enumerate}
\end{remark}

\begin{remark}
Let $\cC$ be an ordinary category, and let $\cE_1,\dots,\cE_k$ be sets of
morphisms of $\cC$ stable under composition and containing identity
morphisms. Then $\N(\cC)_{\cE_1,\dots,\cE_k}$ and
$\N(\cC)^\cart_{\cE_1,\dots,\cE_k}$ can be interpreted as the $k$-fold nerves
in the sense of Fiore and Paoli \cite{FP}*{Definition 2.14} of suitable
$k$-fold categories. More generally, if $\cQ_{ij}$ are sets of squares stable
under composition in both directions such that $(\N(\cC),\cE,\cQ)$ is a
$k$-tiled $\infty$-category, then $\delta^{k\square}_*(\N(\cC),\cE,\cQ)$ is
the $k$-fold nerve of a suitable $k$-fold category.
\end{remark}

\section{Multisimplicial descent}
\label{4ss}

In this section, we study the map of simplicial sets obtained by composing
two directions in a multisimplicial nerve. Unlike in Theorem \ref{0th:main},
the two directions are not subject to the Cartesian restriction. The main
result is Theorem \ref{4th:multisimplicial_descent}, which is a general
criterion for the map to be a categorical equivalence. We then give more
specific sufficient conditions in two important special cases: Theorems
\ref{4co:multisimplicial_descent} and \ref{4pr:descent}. The latter can be
regarded as a generalization of Deligne's result
\cite{SGA4XVII}*{Proposition 3.3.2} (see Remark \ref{4re:deligne}).

In Deligne's theory, a fundamental role is played by the category of
compactifications of a morphism $f$, whose objects are factorizations of $f$
as $p\circ q$, where $p$, $q$ belong respectively to the two classes of
morphisms in question. To properly formulate compactifications of simplices
of higher dimensions, we introduce a bit of notation.

We identify \emph{partially ordered sets} with ordinary categories in which
there is at most one arrow between each pair of objects, by the convention
$p\le q$ if and only if there exists an arrow $p\to q$. For every element
$p\in P$, we identify the overcategory $P_{/p}$ (resp.\ undercategory
$P_{p/}$) with the full partially ordered subset of $P$ consisting of
elements $\leq p$ (resp.\ $\geq p$). For $p,p'\in P$, we identify
$P_{p//p'}$ with the full partially ordered subset of $P$ consisting of
elements both $\geq p$ and $\leq p'$, which is empty unless $p\leq p'$. For
a subset $Q$ of $P$, we write $Q_{p/}=Q\cap P_{p/}$, etc.

\begin{notation}
Let $n\geq0$ be an integer. We consider the bisimplicial set $\Delta^{n,n}$
and the partially ordered set $[n]\times [n]$, related by the natural
isomorphisms of simplicial sets
$\delta_2^*\Delta^{n,n}\simeq\Delta^n\times\Delta^n\simeq\N([n]\times[n])$.
We enumerate their vertices by coordinates $(i,j)$ for $0\leq i,j\leq n$. We
define $\Cpt^n\subseteq\Delta^{n,n}$ to be the bisimplicial subset obtained
by the vertices $(i,j)$ with $0\leq i\leq j\leq n$. We define
$\RCpt^n\subseteq[n]\times [n]$ to be the full partially ordered subset
spanned by $(i,j)$ with $0\leq i\leq j\leq n$. We have
\begin{align*}
\delta_2^*\Cpt^n\simeq \Box^n\subseteq\CCpt^n\colonequals\N(\RCpt^n),
\end{align*}
where we have put $\Box^n\colonequals\bigcup_{k=0}^n\Box_k^n$ and
$\Box_k^n\colonequals\N(\RCpt^n_{(0,k)//(k,n)})$ is the nerve of the full
partially ordered subset of $[n]\times[n]$ spanned by $(i,j)$ with $0\leq
i\leq k\leq j\leq n$.
\end{notation}

Below is the Hasse diagram of $\RCpt^3$, rotated so that the initial object
is shown in the upper-left corner. The dashed box represents $\Box^3_1$,
while bullets represent elements in the image of the diagonal embedding
$[3]\to \RCpt^3$.
\begin{equation}\label{4eq:Cpt}
\begin{xy}
(0,5)="00"; (5,5)*\cir<1.8pt>{}="01"**\dir{-};
(10,5)*\cir<1.8pt>{}="02"**\dir{-}; (15,5)*\cir<1.8pt>{}="03"**\dir{-};
(5,0)="11"; (10,0)*\cir<1.8pt>{}="12"**\dir{-};
(15,0)*\cir<1.8pt>{}="13"**\dir{-};
(10,-5)="22";
(15,-5)*\cir<1.8pt>{}="23"**\dir{-};
(15,-10)="33"**\dir{-};
"01"; "11"**\dir{-};
"02"; "12"**\dir{-}; "22"**\dir{-};
"03"; "13"**\dir{-}; "23"**\dir{-};
"00"*{\bullet}; "11"*{\bullet}; "22"*{\bullet}; "33"*{\bullet};
(3,-2)="b"; (3,7)**\dir{--}; (17,7)**\dir{--}; (17,-2)**\dir{--}; "b"**\dir{--};
\end{xy}
\end{equation}
Note that the first coordinate is represented vertically and the second one
is represented horizontally.

The following lemma is crucial for our argument, whose proof will be given in
Lemma \ref{6le:cpt_inner}.

\begin{lemma}\label{4le:cpt_inner}
The inclusion $\Box^n\subseteq\CCpt^n$ is inner anodyne.
\end{lemma}

We now review compactifications in ordinary categories.

\begin{definition}\label{4de:comp}
Let $\cC$ be an ordinary category and let $\cE_1$, $\cE_2$ be two sets of
morphisms of $\cC$ containing all identity morphisms. Let $\tau\colon [n]\to
\cC$ be a functor, corresponding to a sequence of morphisms
\[c_0\to c_1\to \dots \to c_n.\]

We define a \emph{compactification of $\tau$} to be a functor $\sigma\colon
\RCpt^n\to \cC$ satisfying the following conditions:
\begin{enumerate}
  \item The functor $\sigma$ carries ``vertical'' morphisms $(i,j)\to
      (i',j)$ of $\RCpt^n$ into $\cE_1$ and ``horizontal'' morphisms
      $(i,j)\to (i,j')$ into $\cE_2$.

  \item The composition $[n]\to \RCpt^n \xto{\sigma} \cC$ is $\tau$. Here
      $[n]\to\RCpt^n$ is the diagonal functor carrying $i$ to $(i,i)$.
\end{enumerate}

Assume that $\cE_\alpha$ is stable under composition for $\alpha=1$ or
$\alpha=2$. The compactifications of $\tau$ can be organized into a category
$\Kpt^\alpha(\tau)$ as follows. Given two compactifications
$\sigma,\sigma'\colon \RCpt^n\to \cC$ of $\tau$, a morphism in
$\Kpt^\alpha(\tau)$ is a natural transformation $\gamma\colon \sigma\to
\sigma'$ satisfying the following conditions:
\begin{enumerate}
  \item For every $(i,j)\in \RCpt^n$, the morphism $\gamma(i,j)\colon
      \sigma(i,j)\to \sigma'(i,j)$ is in $\cE_\alpha$.

  \item The restriction of $\gamma$ to $[n]$ via the diagonal functor is
      $\id_\tau$.
\end{enumerate}
\end{definition}

For $\alpha=1$ (and $n\le 3$), $\Kpt^1(\tau)$ is the category of
compactifications considered by Deligne \cite{SGA4XVII}*{D\'efinition
3.2.5}.

In the language of $2$-marked simplicial sets, we can reformulate the two
conditions (1) in Definition \ref{4de:comp} as follows. Condition (1) in the
definition of compactifications means that the restriction of
$\N(\sigma)\colon \CCpt^n\to \N(\cC)$ to $\Box^n$ induces a map of $2$-marked
simplicial sets $\delta^*_{2+}\Cpt^n\to (\N(\cC),\cE_1,\cE_2)$. Condition (1)
in the definition of morphisms means that the restriction of
$\N(\gamma)\colon \Delta^1\times \RCpt^n\to \N(\cC)$ to $\Delta^1\times
\Box^n$, where $\gamma$ is regarded as a functor $[1]\times \RCpt^n\to \cC$,
induces a map of $2$-marked simplicial sets
$(\Delta^1)^{\sharp^{2}_{\{\alpha\}}}\times \delta^*_{2+}\Cpt^n\to
(\N(\cC),\cE_1,\cE_2)$. See Definition \ref{3de:multimarked_simplicial} for
the notation $(-)^{\sharp^{2}_{\{\alpha\}}}$.

We now define compactifications in $\infty$-categories, and more generally
in simplicial sets. Besides the need to deal with simplices of higher
dimensions, the definition is more complicated in two other ways: we
consider an extra set $K$ of ``directions'' and we consider restrictions not
only on edges, but also on squares, which leads to the use of multi-tiled
simplicial sets.

\begin{definition}\label{4de:compactification}
Let $K$ be a set and let $(X,\cT)$ be a $(\{1,2\}\coprod K)$-tiled simplicial
set. For $L\subseteq K$, integers $n,n_k\geq 0$ ($k\in K$), a map $\tau\colon
\Delta^{n,n_k\res k\in K}_L\to \delta_*^{\{0\}\amalg K}X$, and
$\alpha\in\{1,2\}\coprod K$, we define
$\Komp^\alpha(\tau)=\Komp^\alpha_{(X,\cT)}(\tau)$, the \emph{$\alpha$-th
simplicial set of compactifications of $\tau$}, to be the limit of the
diagram
\begin{equation}\label{4eq:limit}
\xymatrix{&&\epsilon^{\{1,2\}\amalg K}_\alpha
\Map(\Cpt^n\boxtimes\Delta^{n_k\res {k\in K}}_L,\delta^{(\{1,2\}\amalg K)\square}_*(X,\cT))\ar@{^{(}->}[d]^g\\
&\Map(\CCpt^n\times\Delta^{[n_k]_{k\in K}}_L,X)
\ar[r]^-{\RES_1}\ar[d]^-{\RES_2}
& \Map(\Box^n\times\Delta^{[n_k]_{k\in K}}_L,X)\\
\{\tau\}\ar@{^{(}->}[r]& \Map(\Delta^{[n,n_k]_{k\in K}}_L,X)}
\end{equation}
in the
category $\Sset$ of simplicial sets, where
\begin{itemize}
  \item we regard $\tau$ as a map $\Delta^{[n,n_k]_{k\in K}}_L\to X$,
      hence a vertex of $\Map(\Delta^{[n,n_k]_{k\in K}}_L,X)$;

  \item $\RES_1$ is induced by the inclusion $\Box^n\subseteq \CCpt^n$;

  \item $\RES_2$ is induced by the diagonal map $\Delta^n\to \CCpt^n$;
      and

  \item $g$ is the composition of maps
      \begin{align*}
      \epsilon^{\{1,2\}\amalg K}_\alpha\Map(\Cpt^n\boxtimes\Delta^{n_k\res {k\in K}}_L,\delta^{(\{1,2\}\amalg
      K)\square}_*(X,\cT))&\hookrightarrow\epsilon^{\{1,2\}\amalg K}_\alpha
      \Map(\Cpt^n\boxtimes\Delta^{n_k\res {k\in K}}_L,\delta^{\{1,2\}\amalg K}_*X)\\
      &\simeq\Map(\Box^n\times\Delta^{[n_k]_{k\in K}}_L, X),
      \end{align*}
      where the isomorphism is the adjunction formula of Remark
      \ref{3re:adjunction}.
\end{itemize}

If $(X,\cE)$ is a $(\{1,2\}\coprod K)$-marked simplicial set, then we put
$\Komp^\alpha_{X,\cE}(\tau)=\Komp^\alpha_{\sfW(X,\cE)}(\tau)$, where $\sfW$
is the functor in Notation \ref{3no:cartesian}. We put
$\Komp^\alpha(\tau)_L=\Komp^\alpha(\tau)$ if $\alpha\not\in L$, and
$\Komp^\alpha(\tau)_L=\Komp^\alpha(\tau)^{op}$ if $\alpha\in L$.
\end{definition}

For brevity, we sometimes write $I$ for $\{1,2\}\coprod K$.

\begin{remark}\label{4re:g}
Let us give a more explicit description of $g$ in \eqref{4eq:limit}. To
simplify notation, we let $Y$ denote the source of $g$. We let
$\iota_\alpha\colon \{1\}\to I$ denote the map with image $\alpha$. For any
simplicial set $S$, we have isomorphisms
\begin{align*}
\Hom_{\Sset}(S,Y)&\simeq \Hom_{\Sset[I]}((\del^{\iota_\alpha})^*S,\Map(\Cpt^n\boxtimes\Delta^{n_k\res {k\in K}}_L,\delta^{I\square}_*(X,\cT)))\\
&\simeq \Hom_{\Sset[I]}((\del^{\iota_\alpha})^*S\times (\Cpt^n\boxtimes\Delta^{n_k\res {k\in K}}_L),\delta^{I\square}_*(X,\cT))\\
&\simeq \Hom_{\Sset^{I\square}}(\delta_{I\square}^*(\del^{\iota_\alpha})^*S\times
\delta_{I\square}^*(\Cpt^n\boxtimes\Delta^{n_k\res {k\in K}}_L),(X,\cT))\\
&\simeq \Hom_{\Sset^{I\square}}(\sfV(S^{\sharp^I_{\{\alpha\}}})\times \sfW\delta_{I+}^*(\Cpt^n\boxtimes\Delta^{n_k\res {k\in K}}_L),(X,\cT)),
\end{align*}
where $\sfV$ and $\sfW$ are the functor in Notation \ref{3no:cartesian}.
Here in the last step we have used the isomorphisms
\[\delta_{I\square}^*(\del^{\iota_\alpha})^*S\simeq \sfV(S^{\sharp^I_{\{\alpha\}}}),
\quad \delta_{I\square}^*(\Cpt^n\boxtimes\Delta^{n_k\res {k\in K}}_L)\simeq \sfW\delta_{I+}^*(\Cpt^n\boxtimes\Delta^{n_k\res {k\in K}}_L).
\]
We define $\{\cF_\beta\}_{\beta\in I}$ by the isomorphism
\[\delta_{I+}^*(\Cpt^n\boxtimes\Delta^{n_k\res {k\in K}}_L)\simeq
(\square^n\times \Delta^{[n_k]_{k\in K}}_L,\{\cF_\beta\}_{\beta \in I}).
\]
In other words, $\cF_\beta$ is the set of edges of
$\epsilon^I_\beta(\Cpt^n\boxtimes\Delta^{n_k\res {k\in K}}_L)$, for all
$\beta\in I$. Then,
\begin{itemize}
  \item A vertex of $Y$ is precisely a map of $I$-marked simplicial sets
      $\delta_{I\square}^*(\Cpt^n\boxtimes\Delta^{n_k\res {k\in K}}_L)\to
      (X,\cT)$. In other words, a map $\sigma\colon \square^n\times
      \Delta^{[n_k]_{k\in K}}_L\to X$ is a vertex of $Y$ if and only if
      it carries $\cF_\beta*\cF_{\beta'}$ into $\cT_{\beta\beta'}$ for
      all $\beta,\beta'\in I$ with $\beta\neq \beta'$. As we observed in
      Remark \ref{3re:multitile}, the condition implies that $\sigma$
      carries $\cF_\beta$ into $\cT_\beta$ for all $\beta \in I$.

  \item Given vertices $\sigma$, $\sigma'$ of $Y$, an edge $\gamma\colon
      \sigma\to \sigma'$ of $\Map(\square^n\times\Delta^{[n_k]_{k\in
      K}}_L,X)$ is an edge of $Y$ if and only if, for every $\beta\in I$,
      $\beta\neq \alpha$ and for every square
      \begin{equation}\label{4eq:Komp}
      \xymatrix{y'\ar[d]\ar[r] & y\ar[d]\\x'\ar[r] & x}
      \end{equation}
      in $\cF_\alpha*\cF_\beta$, with vertical arrows in $\cF_\alpha$ and
      horizontal arrows in $\cF_\beta$, $\gamma$ carries the
      square
      \begin{equation}\label{4eq:Komp2}
      \xymatrix{(0,y')\ar[d]\ar[r] & (0,y)\ar[d]\\(1,x')\ar[r] & (1,x)}
      \end{equation}
      to a square in $\cT_{\alpha\beta}$. Here we have regarded $\gamma$
      as a map $\Delta^1\times (\square^n\times \Delta^{[n_k]_{k\in
      K}}_L)\to\cC$. We note two special cases of the condition:
      \begin{enumerate}
        \item For every edge $y\to x$ in $\cF_\alpha$, $\gamma$
            carries $(0,y)\to (1,x)$ to an edge in $\cT_\alpha$.

        \item For every $\beta\in I$, $\beta\neq \alpha$ and for
            every edge $x'\to x$ in $\cF_\beta$, $\gamma$ carries the
            square
            \[\xymatrix{(0,x')\ar[d]\ar[r] &
            (0,x)\ar[d]\\(1,x')\ar[r] & (1,x)}
            \] to a square in $\cT_{\alpha\beta}$.
      \end{enumerate}
      If $\cT_{\alpha\beta}$ is stable under composition in the first
      direction for every $\beta$, then Condition (2) is also a
      sufficient condition for $\gamma$ to be an edge of $Y$.

  \item For $m\ge 2$, an $m$-simplex $\gamma$ of
      $\Map(\square^n\times\Delta^{[n_k]_{k\in K}}_L,X)$ is an
      $m$-simplex of $Y$ if and only if each edge of $\gamma$ is an edge
      of $Y$.
\end{itemize}
In particular, $g$ satisfies the (unique) right lifting property with
respect to $\partial \Delta^m\subseteq \Delta^m$ for $m\ge 2$.
\end{remark}

\begin{remark}
In the situation of Definition \ref{4de:comp}, we have a canonical
isomorphism $\Komp^\alpha_{\N(\cC),\cE_1,\cE_2}(\tau)\simeq
\N(\Kpt^\alpha(\tau))$. We will see in Lemma \ref{4le:infinity_category} that
the simplicial set $\Komp^\alpha_{(X,\cT)}$ is an $\infty$-category under
mild hypotheses.
\end{remark}

\begin{remark}\label{4re:fibration}
We let $D^n$ denote the intersection of $\square^n$ and the diagonal
embedding $\Delta^n\to \CCpt^n$. Then $D^n$ is the disjoint union of $n+1$
points. Note that the diagram \eqref{4eq:limit} can be completed into a
commutative diagram
\begin{align*}
\xymatrix{
\Map(\CCpt^n\times\Delta^{[n_k]_{k\in K}}_L,X)\ar[rd]^-{\RES_4}\ar@/^1.5pc/[rrd]^-{\RES_1}\ar@/_1.5pc/[ddr]_-{\RES_2}
&&\epsilon^I_\alpha\Map(\Cpt^n\boxtimes \Delta^{n_k\res k\in K}_L, \delta^{I\square}_*(X,\cT))\ar[d]^g\\
&\Map((\Box^n\coprod_{D^n}
\Delta^n)\times\Delta^{[n_k]_{k\in K}}_L,X)\ar[r]\ar[d]
& \Map(\Box^n\times\Delta^{[n_k]_{k\in K}}_L,X)\ar[d]^-{\RES_3}\\
\{\tau\}\ar[r]& \Map(\Delta^{n}\times\Delta^{[n_k]_{k\in K}}_L,X)\ar[r] &
\Map(D^n\times\Delta^{[n_k]_{k\in K}}_L,X),}
\end{align*}
where the lower right square is a pullback. Here the maps in the lower right
square (including $\RES_3$) and $\RES_4$ are obvious restrictions. If $X$ is
an $\infty$-category, then $\RES_i$, $2\le i\le 4$ are Cartesian fibrations
(and coCartesian fibrations) by \cite{Lu1}*{Proposition 3.1.2.1} and
$\RES_1$ is a trivial Kan fibration by Lemma \ref{4le:cpt_inner} and
\cite{Lu1}*{Corollaries 2.3.2.4, 2.3.2.5}. Moreover, $\RES_1$ is an
isomorphism if $X$ is isomorphic to the nerve of an ordinary category.
\end{remark}

\begin{remark}
We have introduced $K$ in the definition mainly for convenience. In the case
where $\alpha\in\{1,2\}$, which is our main case of interest, we could reach
the same generality without $K$. In fact, we can define a $\{1,2\}$-tiled
simplicial set $(X',\cT')$, where $X'$ is the full simplicial subset of
$\Map(\Delta^{[n_k]_{k\in K}}_L,X)$ spanned by maps corresponding to maps
$\Delta^{n_k\res {k\in K}}_L\to \delta_*^{K\Box}(X,\cT_K)\subseteq
\delta_*^K X$ (where $\cT_K$ denotes the $K$-tiling induced by $\cT$), with
the following property: If $\tau$ defines an $n$-simplex $\tau'$ of $X'$,
then we have an isomorphism $\Komp^\alpha_{(X,\cT)}(\tau)\simeq
\Komp^\alpha_{(X',\cT')}(\tau')$; otherwise $\Komp^\alpha_{(X,\cT)}(\tau)$
is empty.
\end{remark}

Note that by Remark \ref{3re:adjunction}, the map $g$ is also equal to the
composition
\begin{align*}
&\quad\epsilon^{\{1,2\}\amalg K}_\alpha\Map(\Cpt^n\boxtimes\Delta^{n_k\res {k\in K}}_L,\delta^{(\{1,2\}\amalg K)\square}_*(X,\cT))\\
&\xto{\delta^*_{\{1,2\}\amalg K}}\Map(\Box^n\times\Delta^{[n_k]_{k\in K}}_L,\delta_{\{1,2\}\amalg
K}^*\delta^{(\{1,2\}\amalg K)\square}_*(X,\cT))\\
&\hookrightarrow\Map(\Box^n\times\Delta^{[n_k]_{k\in K}}_L,\delta_{\{1,2\}\amalg
K}^*\delta^{\{1,2\}\amalg K}_*X)\\
&\to\Map(\Box^n\times\Delta^{[n_k]_{k\in K}}_L,X),
\end{align*}
where the last map is induced by the counit map. We consider the composition
\begin{align}\label{4eq:compactification_phi}
\phi(\tau)\colon \Komp^\alpha(\tau)_L \to \epsilon^{I}_\alpha\op^{I}_L\Map(\Cpt^n\boxtimes\Delta^{n_k\res {k\in K}}_L,\delta^{I\square}_*(X,\cT))
\xto{\delta^*_{I}} \Map(\square^n\times\Delta^{[n_k]_{k\in K}},\delta_{I,L}^*\delta^{I\square}_*(X,\cT)),
\end{align}
which will be used in the proof of Theorem \ref{4th:multisimplicial_descent}
below.

\begin{remark}\label{4re:phi}
By construction, the composition
\[\Komp^\alpha(\tau)_L\xto{\phi(\tau)}
\Map(\Box^n\times\Delta^{[n_k]_{k\in K}},\delta_{I,L}^*\delta^{I\square}_*(X,\cT))\to \Map(D^n\times
\Delta^{[n_k]_{k\in K}},\delta_{I,L}^*\delta^{I\square}_*(X,\cT)),
\]
where the second map is induced by the inclusion $D^n\subseteq \square^n$
(see Remark \ref{4re:fibration} for the notation), is constant of value
$\delta_{I,L}^*\tau_0$, where
\[\tau_0\colon \delta^2_*(D^n)\boxtimes
\Delta^{n_k\res{k\in K}}_L\to \delta^{I}_*X
\]
is the restriction of $\tau$. If $\Komp^\alpha(\tau)$ is nonempty, then
$\tau_0$ factorizes through $\delta^{I\square}_*(X,\cT)$.
\end{remark}

Next we consider $(\{0\}\coprod K)$-tilings. Let $(X,\cT')$ be a
$(\{0\}\coprod K)$-tiled simplicial set. For brevity we sometimes write $J$
for $\{0\}\coprod K$. For $L\subseteq K$ and $\alpha'\in J$, we have the
commutative diagram
\begin{equation}\label{4eq:zeroK}
\xymatrix{\epsilon^J_{\alpha'}
\Map(\CCpt^n\boxtimes\Delta^{n_k\res {k\in K}}_L,\delta^{J\square}_*(X,\cT')) \ar@{^{(}->}[rr] \ar[d]_{\delta^*_J}
&&\epsilon^J_{\alpha'}
\Map(\CCpt^n\boxtimes\Delta^{n_k\res {k\in
K}}_L,\delta^J_*X)\ar[d]^-{\simeq}\\
\Map(\CCpt^n\times\Delta^{[n_k]_{k\in K}}_L,\delta_J^*\delta^{J\square}_*(X,\cT')) \ar@{^{(}->}[r] &
\Map(\CCpt^n\times\Delta^{[n_k]_{k\in K}}_L,\delta^*_J\delta_*^J X)\ar[r]&
\Map(\CCpt^n\times\Delta^{[n_k]_{k\in K}}_L, X)}
\end{equation}
by Remark \ref{3re:adjunction}. This is similar to the situation of the map
$g$ in Definition \ref{4de:compactification}.

To compare the restricted multisimplicial nerves of $(X,\cT)$ and of
$(X,\cT')$, we make some assumptions.

\begin{assumption}\label{4as:composition}
Let $(X,\cT)$ be a $(\{1,2\}\coprod K)$-tiled simplicial set and let
$(X,\cT')$ be a $(\{0\}\coprod K)$-tiled simplicial set. Consider the
following assumptions:
\begin{enumerate}
  \item For $\sigma\colon \Delta^2\to X$ with $\sigma\circ d^2_0\in
      \cT_1$, $\sigma\circ d^2_2\in \cT_2$, we have $\sigma\circ d^2_1\in
      \cT'_0$.

  \item For $k\in K$ and $\sigma \colon \Delta^2\times \Delta^1\to X$
      satisfying $\sigma\circ (d^2_0\times \id)\in \cT_{1k}$,
      $\sigma\circ (d^2_2\times \id)\in \cT_{2k}$, we have $\sigma\circ
      (d^2_1\times \id)\in \cT'_{0k}$.

  \item For $k\in K$, we have $\cT_k\subseteq \cT'_k$. For distinct
      elements $k,k'\in K$, we have $\cT_{kk'}\subseteq \cT'_{kk'}$.
\end{enumerate}
\end{assumption}

Note that (2) implies (1) if $K$ is nonempty.

\begin{remark}\label{4re:assump}
Assumption (1) implies $\cT_1,\cT_2\subseteq \cT'_0$. Conversely, if we have
$\cT_1,\cT_2\subseteq \cT'_0$ and $\cT'_0$ is stable under composition, then
Assumption (1) holds. Similarly, Assumption (2) implies
$\cT_{1k},\cT_{2k}\subseteq \cT'_{0k}$. Conversely, if we have
$\cT_{1k},\cT_{2k}\subseteq \cT'_{0k}$ and $\cT'_{0k}$ is stable under
composition in the first direction, then Assumption (2) holds.
\end{remark}

We consider the maps $\mu_0\colon \{1,2\}\to \{0\}$ and $\mu=\mu_0\coprod
\id_K \colon \{1,2\}\coprod K\to \{0\}\coprod K$. For brevity, we sometimes
write $I$ for $\{1,2\}\coprod K$ and $J$ for $\{0\}\coprod K$.

\begin{lemma}\label{4le:multisimp}
Suppose that Assumption \ref{4as:composition} is satisfied. Then
\begin{enumerate}
  \item The isomorphism $\delta_*^{\{1,2\}\amalg K}X\simeq
      (\del^\mu)_*\delta_*^{\{0\}\amalg K} X$ induces an inclusion
      \[\delta_*^{(\{1,2\}\amalg K)\square}(X,\cT)\subseteq
      (\del^\mu)_*\delta_*^{(\{0\}\amalg K)\square}(X,\cT').\]

  \item The pullback of $g$ by $\RES_1$ in Definition
      \ref{4de:compactification} factorizes through the upper-left corner
      of the diagram \eqref{4eq:zeroK} with $\alpha'=\mu(\alpha)$.
\end{enumerate}
\end{lemma}

\begin{proof}
(1) We have
\[\delta^*_{I\square}\Delta^{n_k\mid k\in
I}=\sfW\delta^*_{I+}\Delta^{n_k\mid k\in I},\quad
\delta^*_{J\square}(\del^\mu)^*\Delta^{n_k\mid k\in I}\simeq \sfW\delta^*_{J+}(\Delta^{[n_1,n_2]}\boxtimes\Delta^{n_k\mid k\in K}).
\]
We let $\cG_\beta$ denote the set of edges of
$\epsilon^I_\beta\Delta^{n_k\mid k\in I}$ for $\beta\in I$, and let $\cG_0$
denote the set of edges of
$\epsilon^J_0(\Delta^{[n_1,n_2]}\boxtimes\Delta^{n_k\mid k\in K})$. Then we
have
\[\delta^*_{I+}\Delta^{n_k\mid k\in I}\simeq (\Delta^{[n_k]_{k\in
I}},\{\cG_\beta\}_{\beta\in I}),\quad \delta^*_{J+}(\Delta^{[n_1,n_2]}\boxtimes\Delta^{n_k\mid k\in K})\simeq (\Delta^{[n_k]_{k\in
I}},\{\cG_\beta\}_{\beta\in J}).
\]
Thus an $(n_k)_{k\in I}$-simplex of $\delta_*^{I\square}(X,\cT)$ is given by
a map $\sigma\colon \Delta^{[n_k]_{k\in I}}\to X$ carrying $\cG_\beta$ into
$\cT_\beta$ for all $\beta\in I$ and carrying $\cG_\beta*\cG_{\beta'}$ into
$\cT_{\beta\beta'}$ for all $\beta,\beta'\in I$, $\beta\neq \beta'$.

Let us show first that $\sigma$ carries $\cG_\beta$ into $\cT'_\beta$ for all
$\beta\in J$. For $\beta\in K$, this follows from the assumption
$\cT_\beta\subseteq \cT'_\beta$. An edge $e$ in $\cG_0$ has the form
$(i,j,a)\to (i',j',a)$, where $a=(a_k)_{k\in K}$. Consider the $2$-simplex
\[\xymatrix{(i,j,a)\ar[r]^-{e''}\ar[rd]_-e&(i,j',a)\ar[d]^-{e'}\\&(i',j',a)}\]
of $\Delta^{[n_k]_{k\in I}}$, where $e'$ is in $\cG_1$ and $e''$ is in
$\cG_2$. By Assumption \ref{4as:composition} (1), $\sigma(e)$ is in
$\cT'_0$.

Next we show that $\sigma$ carries $\cG_{\beta}*\cG_{\beta'}$ into
$\cT'_{\beta\beta'}$ for all $\beta,\beta'\in J$, $\beta\neq \beta'$. For
$\beta,\beta'\in K$, this follows from the assumption
$\cT_{\beta\beta'}\subseteq \cT'_{\beta\beta'}$. It remains to show that
$\sigma$ carries $\cG_0*\cG_\beta$ into $\cT'_{0\beta}$ for $\beta\in K$.
Every square $\varsigma$ in $\cG_0*\cG_\beta$ can be extended to a map
$\Delta^2\times \Delta^1\to \Delta^{[n_k]_{k\in I}}$ as shown by the diagram
\[\xymatrix{(i,j,b)\ar[r]\ar[d] & (i,j,a)\ar[d]\\
(i,j',b)\ar[r]\ar[d] & (i,j',a)\ar[d]\\
(i',j',b)\ar[r] & (i',j',a)}
\]
with $\varsigma$ as the outer square. The upper square is in
$\cG_2*\cG_\beta$ and the lower square is in $\cG_1*\cG_\beta$. Thus, by
Assumption \ref{4as:composition} (2), $\sigma(\varsigma)$ is in
$\cT'_{0\beta}$.

(2) We let $Y'$ denote the pullback of $g$ by $\RES_1$ and let $Z$ denote
the upper-left corner of the diagram \eqref{4eq:zeroK}. We adopt the
notation of Remark \ref{4re:g}. Note that for $\beta\in K$, $\cF_\beta$ can
be identified with the set of edges of $\epsilon^J_\beta(\CCpt^n\boxtimes
\Delta^{n_k\mid k\in K}_L)$. We let $\cF_0$ denote the set of edges of
$\epsilon^J_0(\CCpt^n\boxtimes \Delta^{n_k\mid k\in K}_L)$. Then
\[\delta^*_{J+}(\CCpt^n\boxtimes \Delta^{n_k\mid k\in K}_L)\simeq
(\CCpt^n\times \Delta^{[n_k]_{k\in K}}_L,\{\cF_\beta\}_{\beta \in J}).\]
Note that $Z$ admits a description similar to the description of $Y$ in
Remark \ref{4re:g}. In particular, for $m\ge 2$, $Z\hookrightarrow
\Map(\CCpt^n\times \Delta^{[n_k]_{k\in K}}_L,X)$ has the right lifting
property with respect to $\partial\Delta^m\subseteq \Delta^m$. Thus it
suffices to check $Y'\subseteq Z$ on the level of vertices and edges.

Let $\sigma\colon \CCpt^n\times \Delta^{[n_k]_{k\in K}}_L\to X$ be a vertex
of $Y'$. To show that $\sigma$ is a vertex of $Z$, we need to check that
$\sigma$ carries $\cF_\beta$ to $\cT'_\beta$ for all $\beta\in J$ and
carries $\cF_\beta*\cF_{\beta'}$ to $\cT'_{\beta\beta'}$ for all
$\beta,\beta'\in J$, $\beta\neq \beta'$. The proof is similar to that of
(1). Note that for every edge $(i,j)\to (i',j')$ of $\CCpt^n$, $(i,j')$ is a
vertex of $\CCpt^n$.

Let $\gamma$ be an edge of $Y'$, regarded as a map $\Delta^1\times
(\CCpt^n\times \Delta^{[n_k]_{k\in K}}_L)\to X$. To show that $\gamma$ is an
edge of $Z$, we first check that for every edge $y\to x$ of
$\cF_{\mu(\alpha)}$, $\gamma$ carries the edge $e\colon (0,y)\to (1,x)$ to
an edge in $\cT'_{\mu(\alpha)}$. If $\alpha\in K$, then this follows from
the assumption $\cT_\alpha\subseteq \cT'_\alpha$. If $\alpha\in \{1,2\}$,
then $e$ can be completed into a $2$-simplex of the form
\[\xymatrix{(0,i,j,a)\ar[r]^-{e''}\ar[rd]_-e & (\alpha-1,i,j',a)\ar[d]^-{e'}\\&(1,i',j',a).}\]
Since $\gamma(e')$ is in $\cT_1$ and $\gamma(e'')$ is in $\cT_2$, we have
$\gamma(e)\in \cT'_0$ by Assumption \ref{4as:composition} (1). Finally we
check that for every $\beta\in J$, $\beta\neq \mu(\alpha)$ and every square
of the form \eqref{4eq:Komp} in $\cF_{\mu(\alpha)}*\cF_\beta$ with vertical
arrows in $\cF_{\mu(\alpha)}$ and horizontal arrows in $\cF_\beta$, $\gamma$
carries the square \eqref{4eq:Komp2} to a square in
$\cT'_{\mu(\alpha)\beta}$. If $\alpha,\beta\in K$, then this follows from
the assumption $\cT_{\alpha\beta}\subseteq \cT'_{\alpha\beta}$. In the
remaining cases we apply Assumption \ref{4as:composition} (2). If $\beta=0$,
we factorize the square horizontally. If $\alpha\in \{1,2\}$, we factorize
the square vertically, with the first component of the middle row given by
$\alpha-1$.
\end{proof}

\begin{construction}
The main result of this section, Theorem
\ref{4th:multisimplicial_descent} below, is about the composition
\begin{multline}\label{4eq:composition}
\delta_{\{1,2\}\amalg K,L}^*\delta^{(\{1,2\}\amalg K)\square}_*(X,\cT) \simeq
\delta_{\{0\}\amalg K,L}^*(\del^\mu)^*\delta^{(\{1,2\}\cup
K)\square}_*(X,\cT) \\
\hookrightarrow\delta_{\{0\}\amalg
K,L}^*(\del^\mu)^*(\del^\mu)_*\delta^{(\{0\}\amalg K)\square}_*(X,\cT')\to
\delta_{\{0\}\amalg K,L}^*\delta^{(\{0\}\amalg K)\square}_*(X,\cT'),
\end{multline}
where the inclusion in the middle is given by Lemma \ref{4le:multisimp} (1)
and the last map is the counit map. An $n$-simplex of the left hand side of
\eqref{4eq:composition} corresponds to a map $\Delta^n\times
\Delta^n\times(\Delta^n)^K\to X$. The map \eqref{4eq:composition} carries it
to the $n$-simplex corresponding to the composition $\Delta^n\times
(\Delta^n)^K\xto{\diag\times \id_{(\Delta^n)^K}} \Delta^n\times
\Delta^n\times(\Delta^n)^K\to X$, where $\diag\colon \Delta^n\to
\Delta^n\times \Delta^n$ is the diagonal map.

For any map $\tau\colon \Delta^{n,n_k\res k\in K}_L\to \delta_*^{\{0\}\amalg
K} X$, we consider the composition
\begin{align}\label{4eq:compactification_psi}
\psi(\tau)\colon \Komp^\alpha(\tau)_L \to \epsilon^{J}_{\mu(\alpha)}\op^{J}_L
\Map(\CCpt^n\boxtimes\Delta^{n_k\res {k\in K}}_L,\delta^{J\square}_*(X,\cT'))
\xto{\delta^*_{J}}
\Map(\CCpt^n\times\Delta^{[n_k]_{k\in K}},\delta_{J,L}^*\delta^{J\square}_*(X,\cT')),
\end{align}
where the first map is given by Lemma \ref{4le:multisimp} (2). We have a
commutative diagram
\[\xymatrix{\Komp^\alpha(\tau)_L \ar[r]^-{\psi(\tau)}\ar[d]_{\phi(\tau)}&
\Map(\CCpt^n\times\Delta^{[n_k]_{k\in K}},\delta_{J,L}^*\delta^{J\square}_*(X,\cT'))\ar[d]\\
\Map(\square^n\times\Delta^{[n_k]_{k\in K}},\delta_{I,L}^*\delta^{I\square}_*(X,\cT)\ar[r] &
\Map(\square^n\times\Delta^{[n_k]_{k\in K}},\delta_{J,L}^*\delta^{J\square}_*(X,\cT')),}
\]
where $\phi(\tau)$ is defined in \eqref{4eq:compactification_phi}, the lower
horizontal arrow is induced by \eqref{4eq:composition}, and the right
vertical arrow is the obvious restriction.
\end{construction}

\begin{remark}\label{4re:psi}
By construction, the composition
\[\Komp^\alpha(\tau)_L \xto{\psi(\tau)}\Map(\CCpt^n\times\Delta^{[n_k]_{k\in K}},\delta_{J,L}^*\delta^{J\square}_*(X,\cT'))
\to \Map(\Delta^{[n,n_k]_{k\in K}},\delta_{J,L}^*\delta^{J\square}_*(X,\cT')),
\]
where the second map is induced by the diagonal embedding $\Delta^n\to
\CCpt^n$, is constant of value $\delta_{J,L}^*\tau$. If $\Komp^\alpha(\tau)$
is nonempty, then $\tau$ factorizes through $\delta_*^{J\square}(X,\cT')$.
\end{remark}

\begin{theorem}[Multisimplicial descent]\label{4th:multisimplicial_descent}
Let $K$ be a set and let $\alpha\in\{1,2\}\coprod K$ be an element. Let
$(X,\cT)$ be a $(\{1,2\}\coprod K)$-tiled simplicial set and let $(X,\cT')$
be a $(\{0\}\coprod K)$-tiled simplicial set, satisfying Assumption
\ref{4as:composition}. We assume that $\Komp^\alpha_{(X,\cT)}(\tau)$ is
weakly contractible for every $n\ge 0$ and every $(n,n_k)_{k\in K}$-simplex
$\tau$ of $\delta^{(\{0\}\amalg K)\square}_*(X,\cT')$ with $n_k=n$. Then, for
every subset $L\subseteq K$, the map
\[
f\colon \delta_{\{1,2\}\amalg K,L}^*\delta^{(\{1,2\}\amalg K)\square}_*(X,\cT)
\to \delta_{\{0\}\amalg K,L}^*\delta^{(\{0\}\amalg K)\square}_*(X,\cT'),
\]
composition of \eqref{4eq:composition}, is a categorical equivalence.
\end{theorem}

Note that the assumption that the simplicial sets $\Komp^\alpha(\tau)$ for
those $\tau$ are nonempty implies that $\cT_k=\cT'_k$ for all $k\in K$, and
$\cT_{kk'}=\cT'_{kk'}$ for all $k,k'\in K$ with $k\neq k'$.

In the case $K=\emptyset$ and $\cT'_0=X_1$, Assumption \ref{4as:composition}
is clearly satisfied and the theorem takes the following form.

\begin{corollary}\label{4co:multi2}
Let $\alpha$ be $1$ or $2$. Let $(X,\cT)$ be a $2$-tiled simplicial set such
that $\Komp^\alpha_{(X,\cT)}(\tau)$ is weakly contractible for every simplex
$\tau$ of $X$. Then the map
\[f\colon \delta^*_2\delta_*^{2\square}(X,\cT)\to X\]
induced by the counit map $\delta^*_2\delta_*^2 X\to X$ is a categorical
equivalence.
\end{corollary}

\begin{proof}[Proof of Theorem \ref{4th:multisimplicial_descent}]
We let $Y$ and $Z$ denote the source and target, respectively, of the map
$f$ in the statement of the theorem. Consider a commutative diagram
\[\xymatrix{Y\ar[r]^-v\ar[d]_f & \Fun(\Delta^l,\cD)\ar[d]^p\\
Z\ar[r]^-w & \Fun(\partial \Delta^l,\cD)}\]
as in Lemma \ref{1le:categorical_equivalence}. Let $\sigma$ be an
$n$-simplex of $Z$, corresponding to a map $\tau\colon \Delta^{n,n_k\res
k\in K}_L\to \delta^{J\square}_*(X,\cT')$, where $n_k=n$. Consider the
commutative diagram
\begin{align}\label{4eq:descent}
\xymatrix{\cN(\sigma)\ar[d]\ar[r] & \Fun(\Delta^l\times\CCpt^n\times\Delta^{[n_k]_{k\in K}},\cD)
\ar[d]^-{\RES_1}\ar[rr]^-{\RES_2} && \Fun(\Delta^l\times \Delta^n,\cD)\ar[d]\ar@/^4pc/[dd]^{\RES_4}\\
\Komp^\alpha(\tau)_L\ar[r]^-{h}\ar[rd]_{v_*\phi(\tau)} &  \Fun(H\times\Delta^{[n_k]_{k\in
K}},\cD)\ar[r]\ar[d]&\Fun(\partial \Delta^l\times \CCpt^n\times\Delta^{[n_k]_{k\in
K}},\cD)\ar[r]^-{\RES_2} & \Fun(\partial\Delta^l\times\Delta^n,\cD)\\
&\Fun(\Delta^l\times \square^n\times\Delta^{[n_k]_{k\in
K}},\cD)\ar[rr]^-{\RES_3}&&\Fun(\Delta^l\times D^n,\cD).}
\end{align}
In the above diagram,
\begin{itemize}
  \item $\RES_1$ is induced by
      \[j\colon
      H=\Delta^l\times\Box^n\coprod_{\partial \Delta^l\times\Box^n
      } \partial\Delta^l\times\CCpt^n \hookrightarrow \Delta^l \times
      \CCpt^n;\]

  \item $h$ is the amalgamation of $v_*\phi(\tau)$ and $w_*\psi(\tau)$,
      where
      \[v_*\phi(\tau)\colon\Komp^\alpha(\tau)_L\to\Fun(\Delta^l\times\Box^n\times\Delta^{[n_k]_{k\in K}},\cD)\]
      is the composition of \eqref{4eq:compactification_phi} and the map
      induced by $v$, and
      \[w_*\psi(\tau)\colon\Komp^\alpha(\tau)_L\to\Fun(\partial\Delta^l\times\CCpt^n\times\Delta^{[n_k]_{k\in K}},\cD)\]
      is the composition of \eqref{4eq:compactification_psi} and the map
      induced by $w$;

  \item $\cN(\sigma)$ is defined so that the upper left square is a
      pullback square;

  \item the two maps $\RES_2$ are both induced by the diagonal embedding
      $\Delta^n\subseteq \CCpt^n\times \Delta^{[n_k]_{k\in K}}$;

  \item $D^n$ is as in Remark \ref{4re:fibration};

  \item $\RES_3$ is induced by the diagonal embedding $D^n\subseteq
      \square^n\times \Delta^{[n_k]_{k\in K}}$; and

  \item $\RES_4$ is induced by the inclusion $D^n\subseteq \Delta^n$;

  \item the unnamed arrows in the middle column and in the upper right
      square are obvious restrictions.
\end{itemize}

By Lemma \ref{4le:cpt_inner} and \cite{Lu1}*{Corollaries 2.3.2.4, 2.3.2.5},
the map $j\times\id_{\Delta^{[n_k]_{k\in K}}}$ is inner anodyne, and
consequently $\RES_1$ is a trivial Kan fibration. Thus $\cN(\sigma)$ is
weakly contractible.

We let $\Phi(\sigma)\colon\cN(\sigma)\to\Fun(\Delta^l\times \Delta^n,\cD)$
denote the composition of the upper horizontal arrows in \eqref{4eq:descent}.
We let $\sigma_0\colon D^n\to Z$ denote the restriction of $\sigma$. Since
$f$ induces a bijection on vertices, $\sigma_0$ factorizes uniquely through a
map $D^n\to Y$, which we still denote by $\sigma_0$. By Remark \ref{4re:phi},
$\RES_3\circ v_*\phi(\tau)$ is constant of value $v(\sigma_0)$. It follows
that $\RES_4\circ \Phi(\sigma)$ is constant of value $v(\sigma_0)$. In
particular, $\Phi(\sigma)$ induces a map
\[\cN(\sigma)^\sharp
\times(\Delta^n)^\flat\to\Fun(\Delta^l,\cD)^\flat\subseteq
\Fun(\Delta^l,\cD)^{\natural}.
\]
Thus $\Phi(\sigma)$ induces a map $\cN(\sigma)\to
\Map^\sharp((\Delta^n)^\flat, \Fun(\Delta^l,\cD)^{\natural})$, which we
still denote by $\Phi(\sigma)$. This construction is functorial in $\sigma$,
giving rise to a morphism $\Phi\colon \cN\to \Map[Z,\Fun(\Delta^l,\cD)]$ in
the category $(\Sset)^{(\del_{/Z})^{op}}$.

By Remark \ref{4re:psi}, the composition of the middle row of
\eqref{4eq:descent} is constant of value $w(\sigma)$. Thus $\Map[Z,p]\circ
\Phi\colon\cN\to \Map[Z,\Fun(\partial\Delta^l,\cD)]$ factorizes through the
morphism $\Delta^0_{(\del_{/Z})^{op}}\to \Map[Z,\Fun(\partial
\Delta^l,\cD)]$ corresponding to $w$ via Remark \ref{2re:functors}.

Now let $\sigma'$ be an $n$-simplex of $Y$ corresponding to a map
$\tau'\colon \Delta_L^{n,n,n_k\mid k\in K}\to \delta^{I\square}_*(X,\cT)$.
By restricting to $\CCpt^n\subseteq \Delta^{[n,n]}$ we obtain a vertex of
$\Komp^\alpha(\tau)$. By restricting the composition
\[\Delta^{n,n,n_k\mid k\in K}\xto{\tau'} \op^I_L\delta_*^{I\square}(X,\cT)\xto{v} \delta_*^I\Fun(\Delta^l,\cD),\]
we obtain a vertex of $\Fun(\Delta^l\times\RCpt^n\times\Delta^{[n_k]_{k\in
K}},\cD)$. The two vertices have the same image in $\Fun(H\times
\Delta^{[n_k]_{k\in K}},\cD)$ and hence provide a vertex $\nu(\sigma')$ of
$\cN(f(\sigma'))$, whose image under $\Phi(f(\sigma'))$ is $v(\sigma')$.
This construction is functorial in $\sigma'$, giving rise to $\nu\in
\Gamma(f^*\cN)_0$ such that $f^*\Phi\circ \nu=v$. Applying Proposition
\ref{1pr:extension} to $\Phi$, the map $f\colon Y\to Z$ and the global
section $\nu$ of $f^*\cN$, we obtain a map $u\colon Z\to \Fun(\Delta^l,\cD)$
satisfying $p\circ u=w$ such that $u\circ f$ and $v$ are homotopic over
$\Fun(\partial \Delta^l,\cD)$, as desired.
\end{proof}

Next we show that in a favorable case, the weak contractibility condition in
the theorem can be reduced to a weak contractibility condition on a
$2$-marked simplicial set.

\begin{theorem}\label{4co:multisimplicial_descent}
Let $\cC$ be an $\infty$-category and $K$ a \emph{finite} set. Consider a
$(\{0,1,2\}\coprod K)$-marked $\infty$-category
$(\cC,\cE_0,\cE_1,\cE_2,\{\cE_k\}_{k\in K})$ such that
\begin{enumerate}
  \item $\cE_1,\cE_2\subseteq \cE_0$;

  \item $\cE_0$ is stable under composition;

  \item $\cE_1$, $\cE_2$ are stable under pullback by $\cE_k$ for all
      $k\in K$;

  \item $\cE_k$ is stable under pullback by $\cE_1$ for all $k\in K$; and

  \item edges in $\cE_k$ admit pullbacks in $\cC$ by edges in $\cE_1$ for
      all $k\in K$.
\end{enumerate}
Then for every $(n,n_k)_{k\in K}$-simplex $\tau$ of the $(\{0\}\coprod
K)$-tiled $\infty$-category $\cC_{\cE_0,\{\cE_k\}_{k\in K}}^\cart$, the
restriction map $\Komp^\alpha_{(\cC,\cT)}(\tau)\to
\Komp^\alpha_{\cC,\cE_1,\cE_2}(\gamma)$, where $\gamma$ is the restriction
of $\tau$ to $\Delta^n\times \{(n_k)_{k\in K}\}$, is a trivial Kan fibration
for every $\alpha\in \{1,2\}$. Here
$(\cC,\cT)=(\cC,\cE_1,\cE_2,\{\cE_k\}_{k\in K},\cQ)$ is the $(\{1,2\}\coprod
K)$-tiled $\infty$-category in which $\cQ$ is determined by the conditions
\[\cQ_{12}=\cE_1*_{\cC}\cE_2,\quad \cQ_{ij}=\cE_i*_\cC^\cart\cE_j,\ (i,j)\neq (1,2),(2,1).\]
Moreover, if for some $\alpha\in \{1,2\}$ and for every simplex $\gamma$ of
$\cC_{\cE_0}\subseteq \cC$, the simplicial set
$\Komp^\alpha_{\cC,\cE_1,\cE_2}(\gamma)$ is weakly contractible, then, for
every subset $L\subseteq K$, the map
\[f\colon \delta^*_{\{1,2\}\amalg K,L}\delta_*^{(\{1,2\}\amalg K)\square}(\cC,\cT)\to
\delta^*_{\{0\}\amalg K,L} \cC_{\cE_0,\{\cE_k\}_{k\in K}}^\cart
\]
is a categorical equivalence.
\end{theorem}

\begin{proof}
By Lemma \ref{3le:cart} and Condition (2), $\cE_0*^\cart_\cC \cE_k$, $k\in K$
are stable under composition in the first direction. Thus by Remark
\ref{4re:assump} and Conditions (1) and (2), Assumption \ref{4as:composition}
is satisfied for $(\cC,\cT)$ and $\cC_{\cE_0,\{\cE_k\}_{k\in K}}^\cart$. By
Theorem \ref{4th:multisimplicial_descent}, it suffices to show the first
assertion. Indeed, the assumption that
$\Komp^\alpha_{\cC,\cE_1,\cE_2}(\gamma)$ is weakly contractible then implies
that $\Komp^\alpha_{(\cC,\cT)}(\tau)$ is weakly contractible.

We let $\infty=(n_k)_{k\in K}$ denote the final object of $[n_k]_{k\in
K}\colonequals \prod_{k\in K} [n_k]$. We have the following commutative
diagram
\begin{equation}\label{4eq:contractible}
\xymatrix{\Komp^\alpha_{(\cC,\cT)}(\tau)\ar[rr]\ar[d]&& \Komp^\alpha_{\cC,\cE_1,\cE_2}(\gamma)\ar[d]\\
\Fun(\CCpt^n\times\Delta^{[n_k]_{k\in K}},\cC)_\RKE\ar[r]^-{\rres_1}&
\cK'\ar[d]
\ar[r]&\cK\ar[d]\\
&\Fun(\N(Q\cup R),\cC)\ar[r]^-{\rres_2}& \Fun(\N(Q)\cup\N(R),\cC),}
\end{equation}
where
\begin{itemize}
  \item $Q=[n]\times[n_k]_{k\in K}\subseteq \RCpt^n\times[n_k]_{k\in K}$
      is induced by the diagonal inclusion $[n]\subseteq \RCpt^n$.

  \item $R=\RCpt^n\times\{\infty\}\subseteq \RCpt^n\times[n_k]_{k\in K}$.

  \item $\Fun(\CCpt^n\times\Delta^{[n_k]_{k\in K}},\cC)_\RKE
      \subseteq\Fun(\CCpt^n\times\Delta^{[n_k]_{k\in K}},\cC)$ is the full
      subcategory spanned by functors $F\colon
      \CCpt^n\times\Delta^{[n_k]_{k\in K}}\to\cC$ which are right Kan
      extensions of $F\res\N(Q\cup R)$.

  \item $\cK'\subseteq \Fun(\N(Q\cup R),\cC)$ is the full subcategory
      spanned by functors $F$ such that the composition $\N(Q\cup
      R)_{(i,j,p)/}\to \N(Q\cup R)\xto{F} \cC$ admits a limit for every
      vertex $(i,j,p)$ of $\CCpt^n\times \Delta^{[n_k]_{k\in K}}$.

  \item $\cK\subseteq \Fun(\N(Q)\cup \N(R),\cC)$ is the full subcategory
      spanned by functors $F$ such that the diagram
      \begin{equation}\label{4eq:pullback}
      \xymatrix{& F(i,j,\infty)\ar[d] \\F(j,j,p)\ar[r]&F(j,j,\infty).}
      \end{equation}
      admits a limit in $\cC$ for every vertex $(i,j,p)$ of
      $\CCpt^n\times\Delta^{[n_k]_{k\in K}}$.

  \item The horizontal arrows are restrictions. We will check that
      $\rres_2$ carries $\cK'$ into $\cK$ below.

  \item The lower vertical arrows are inclusions.

  \item The upper right vertical arrow is the amalgamation of the
      inclusion $\Komp^\alpha_{\cC,\cE_1,\cE_2}(\gamma)\subseteq
      \Fun(\N(R),\cC)$ with $\tau$, viewed as a vertex of
      $\Fun(\N(Q),\cC)$. The fact that the image is in $\cK$ follows from
      Conditions (3) and (5) (Condition (3) is needed if $\# K\ge 2$).

  \item The left vertical arrow is induced by the inclusion
      $\Komp^\alpha_{(\cC,\cT)}(\tau)\subseteq
      \Fun(\CCpt^n\times\Delta^{[n_k]_{k\in K}},\cC)$. We will check that
      the image is contained in $\Fun(\CCpt^n\times\Delta^{[n_k]_{k\in
      K}},\cC)_\RKE$ below.
\end{itemize}
For any vertex $(i,j,p)$ of $\CCpt^n\times \Delta^{[n_k]_{k\in K}}$, we let
$g\colon \Delta^1\times \Delta^1\to \CCpt^n\times\Delta^{[n_k]_{k\in K}}$
denote the square
\[\xymatrix{(i,j,p)\ar[r]\ar[d]& (i,j,\infty)\ar[d] \\(j,j,p)\ar[r]&(j,j,\infty).}\]
We have $\Delta^1\times \Delta^1\simeq ((\Lambda^2_0)^{op})^\triangleleft$.
The induced map $\Lambda^2_0\to (\N(Q\cup R)_{(i,j,p)/})^{op}$ is cofinal by
Lemma \ref{3le:cofinal} below. Thus a functor $G\colon
\CCpt^n\times\Delta^{[n_k]_{k\in K}}\to\cC$ is a right Kan extension of
$G\res\N(Q\cup R)$ if and only if $G\circ g$ is a pullback square, for all
$(i,j,p)$. For any vertex $G$ of $\Komp^\alpha_{(\cC,\cT)}(\tau)$, regarded
as a functor $G\colon \CCpt^n\times\Delta^{[n_k]_{k\in K}}\to\cC$, the
square $G\circ g$ is obtained by a finite sequence of compositions from
squares in $\cT_{1k}=\cE_{1}*^\cart_\cC\cE_k$, $k\in K$. Therefore, the
image of $\Komp^\alpha_{(\cC,\cT)}(\tau)\subseteq
\Fun(\CCpt^n\times\Delta^{[n_k]_{k\in K}},\cC)$ is contained in
$\Fun(\CCpt^n\times\Delta^{[n_k]_{k\in K}},\cC)_\RKE$. Moreover, if $F\colon
\N(Q\cup R)\to \cC$ is a functor, then the composition $\N(Q\cup
R)_{(i,j,p)/}\to \N(Q\cup R)\xto{F} \cC$ admits a limit if and only if the
diagram \eqref{4eq:pullback} admits a limit. Thus $\rres_2$ carries $\cK'$
into $\cK$ and the lower right square in a pullback.

By \cite{Lu1}*{Proposition 4.3.2.15}, $\rres_1$ is a trivial Kan fibration.
We apply Lemma \ref{6le:union} to show that the inclusion $\N(Q)\cup
\N(R)\subseteq \N(Q\cup R)$ is inner anodyne. For this we need to check that
$Q\cup R=Q\coprod_{Q\cap R} R$ is a pushout in the category of partially
ordered sets (see Remark \ref{6re:order}). Let $(i,i,p)$ be in $Q$ and
$(i',j',\infty)$ in $R$. If we have $(i',j',\infty)\le (i,i,p)$, then
$p=\infty$ so that $(i,i,p)$ is in $Q\cap R$. On the other hand, if we have
$(i,i,p)\le (i',j',\infty)$, then we have $(i,i,p)\le (i',i',\infty)\le
(i',j',\infty)$. It follows that $\rres_2$ is a trivial Kan fibration.

To show that the upper horizontal arrow is a trivial Kan fibration, it
remains to show that, ignoring the middle term in the second row, the upper
square of \eqref{4eq:contractible} is also a pullback square. This amounts
to saying that for every $m$-simplex $\sigma$ of
$\Fun(\CCpt^n\times\Delta^{[n_k]_{k\in K}},\cC)_\RKE$, if the restriction of
$\sigma$ to $\N(Q)$ is $\tau$ and the restriction of $\sigma$ to $\N(R)$ is
in $\Komp^\alpha_{\cC,\cE_1,\cE_2}(\gamma)$, then $\sigma$ is a simplex of
$\Komp^\alpha_{(\cC,\cT)}(\tau)$. By Remark \ref{4re:g}, it suffices to
treat the cases $m=0$ and $m=1$.

Case $m=0$. Consider integers $0\le i\le i'\le j\le j'\le n$ and a morphism
$p\le q$ of $[n_k]_{k\in K}$. Since $\sigma\colon
\CCpt^n\times\Delta^{[n_k]_{k\in K}}\to\cC$ is a right Kan extension of
$\sigma \res \N(Q\cup R)$, it carries the outer and right squares of the
diagram
\[\xymatrix{(i,j,p)\ar[r]\ar[d]&(i,j,q)\ar[r]\ar[d] & (i,j,\infty)\ar[d]\\
(j,j,p)\ar[r] &(j,j,q)\ar[r] & (j,j,\infty)}
\]
to pullback squares. It follows that $\sigma$ carries the left square to a
pullback square. Thus, since the restriction of $\sigma$ to $\N(Q)$ is
$\tau$, $\sigma$ carries $\cF_k$ to $\cE_k$ for all $k\in K$ by Condition
(4), where $\cF_k$ is defined in Remark \ref{4re:g}. Moreover, since
$\sigma$ carries the outer and lower squares of the diagram
\[\xymatrix{(i,j,p)\ar[r]\ar[d] & (i,j,q)\ar[d]\\
(i',j,p)\ar[r]\ar[d] & (i',j,q)\ar[d]\\
(j,j,p)\ar[r] & (j,j,q)}
\]
to pullbacks, it carries the upper square to a pullback. Taking $q=\infty$,
Condition (3) then implies that $\sigma$ carries $(i,j,p)\to (i',j,p)$ to a
morphism in $\cE_1$. It follows that $\sigma$ carries $\cF_1*\cF_k$ into
$\cE_1*^\cart\cE_k$ for every $k\in K$. Consider the cube
\[\xymatrix{(i,j,p)\ar[rr]\ar[rd]\ar[dd] && (i,j,q)\ar[rd]\ar@{-->}[dd]\\
&(i,j',p)\ar[rr]\ar[dd] &&(i,j',q)\ar[dd]\\
(j,j,p)\ar@{-->}[rr]\ar[rd] && (j,j,q)\ar@{-->}[rd]\\
&(j',j',p)\ar[rr] &&(j',j',q).}
\]
The image of the bottom square under $\sigma$ can be obtained by a finite
sequence of compositions from squares in $\cE_0*_\cC^\cart \cE_k$, $k\in K$.
Since $\sigma$ carries the front and back squares to pullbacks as well,
$\sigma$ carries the top square to pullback. Taking $q=\infty$, Condition
(3) then implies that $\sigma$ carries $(i,j,p)\to (i,j',p)$ to a morphism
in $\cE_2$. It follows that $\sigma$ carries $\cF_2*\cF_k$ into
$\cE_2*^\cart \cE_k$ for every $k\in K$. Finally, given a square $S$ in
$\cF_k*\cF_{l}$ for distinct $k,l\in K$, let $(i,j)$ be its projection in
$\CCpt^n$ and $T$ its projection in $\Delta^{[n_k]_{k\in K}}$. Then $S$ can
be identified with the top face of a cube, product of the edge $(i,j)\to
(j,j)$ and the square $T$. Since $\sigma$ carries the other five faces of
the cube to pullback squares, it carries $S$ to a pullback as well.

Case $m=1$. We check Condition (2) in Remark \ref{4re:g}. For $0\le i\le
j\le n$ and $p\le q$ in $[n_k]_{k\in K}$, consider the following cube in
$\Delta^1\times\CCpt^n\times\Delta^{[n_k]_{k\in K}}$:
\[\xymatrix{(0,i,j,p)\ar[rd]\ar[dd]\ar[rr]&&(0,i,j,q)\ar[rd]\ar@{-->}[dd]\\
&(0,j,j,p)\ar[rr]\ar[dd]&&(0,j,j,q)\ar[dd]\\
(1,i,j,p)\ar@{-->}[rr]\ar[rd]&& (1,i,j,q)\ar@{-->}[rd]\\
&(1,j,j,p)\ar[rr]&&(1,j,j,q).}
\]
Since $\sigma\colon \Delta^1\times\CCpt^n\times\Delta^{[n_k]_{k\in
K}}\to\cC$ carries the top and bottom squares to pullbacks and carries the
front square to the identity on $\tau(j,p)\to \tau(j,q)$, it carries the
back square to a pullback. Taking $q=\infty$, Condition (3) then implies
that $\sigma$ carries $(0,i,j,p)\to (1,i,j,p)$ to a morphism in
$\cE_\alpha$.
\end{proof}

\begin{lemma}\label{3le:cofinal}
Let $P$ be a partially ordered set and $f\colon \Lambda^2_0\to \N(P)$ a map.
Assume that $f(0)$ is the product (namely, supremum) of $f(1)$ and $f(2)$ in
$P$, and $P_{/f(1)}\cup P_{/f(2)}=P$. Then $f$ is cofinal
\cite{Lu1}*{Definition 4.1.1.1}.
\end{lemma}

\begin{proof}
By \cite{Lu1}*{Theorem 4.1.3.1}, it suffices to show that for every $p\in
P$, the simplicial set $S=\Lambda^2_0\times_{\N(P)}\N(P_{p/})$ is weakly
contractible. By the second assumption, either $p\le f(1)$ or $p\le f(2)$.
If exactly one of the two inequalities holds, then $S$ is a point. If both
inequalities hold, then $p\le f(0)$ by the first assumption, and hence
$S=\Lambda^2_0$.
\end{proof}

\begin{remark}
In Theorem \ref{4co:multisimplicial_descent}, the Cartesian restriction on
$\cE_k*_\cC\cE_l$ for $k,l\in K$ is not essential. To be more precise, under
the assumptions of the theorem, consider an $I$-tiling $\cT=((\cE_{i})_{i\in
I},(\cQ_{ij})_{i,j\in I,i\neq j})$ and a $J$-tiling $\cT'=((\cE_i)_{i\in
J},(\cQ_{ij})_{i,j\in J,i\neq j})$ such that $\cQ_{12}=\cE_1*_\cC \cE_2$,
$\cQ_{ik}=\cE_i*^\cart_\cC \cE_k$ for $i=0,1,2$ and $k\in K$, and
$\cQ_{kl}\subseteq \cE_k*_\cC \cE_l$ is stable under pullback by
$\cQ_{1l}=\cE_1*_\cC^\cart \cE_l$ in the first direction or stable under
pullback by $\cQ_{k1}=\cE_k*_\cC^\cart\cE_1$ in the second direction for
$k,l\in K$, $k\neq l$. Then the proof shows that the restriction map
$\Komp^\alpha_{(\cC,\cT)}(\tau)\to \Komp^\alpha_{\cC,\cE_1,\cE_2}(\gamma)$
is a trivial Kan fibration for every $(n,n_k)_{k\in K}$-simplex $\tau$ of
$\delta^{(\{0\}\amalg K)\square}_*(X,\cT')$, and if
$\Komp^\alpha_{\cC,\cE_1,\cE_2}(\gamma)$ is weakly contractible for every
$\gamma$, then
\[
f\colon \delta_{\{1,2\}\amalg K,L}^*\delta^{(\{1,2\}\amalg K)\square}_*(X,\cT)
\to \delta_{\{0\}\amalg K,L}^*\delta^{(\{0\}\amalg K)\square}_*(X,\cT')
\]
is a categorical equivalence.
\end{remark}

As promised, we give sufficient conditions for the simplicial set
$\Komp^\alpha(\tau)$ to be an $\infty$-category.

\begin{lemma}\label{4le:infinity_category}
In the situation of Definition \ref{4de:compactification},
\begin{enumerate}
  \item Assume that $\cT_{\alpha \beta}$ is stable under composition in
      the first direction (Definition \ref{3de:squares}) for all
      $\beta\in \{1,2\}\coprod K$, $\beta\neq \alpha$. Then the map $g$
      is an inner fibration. Moreover, if $X$ is an $\infty$-category,
      then $\Komp^\alpha_{(X,\cT)}(\tau)$ is an $\infty$-category.

  \item If we have $(X,\cT)=\sfW(X,\cE)$ and $\cE_\alpha$ is composable
      (Definition \ref{3de:edges}) and $X$ is an $\infty$-category, then
      $\Komp^\alpha_{X,\cE}(\tau)=\Komp^\alpha_{(X,\cT)}(\tau)$ is an
      $\infty$-category.
\end{enumerate}
\end{lemma}

The assumption in (1) implies that $\cT_\alpha$ is stable under composition.
The assumption in (1) is satisfied if we have $(X,\cT)=\sfW(X,\cE)$ and
$\cE_\alpha$ is stable under composition.

\begin{proof}
By Remark \ref{4re:g}, $g$ satisfies the right lifting property with respect
to every horn inclusion $\Lambda^m_i\subseteq \Lambda^m$ for $m\ge 3$. Thus,
for the first assertion of (1), it suffices to show that $g$ satisfies the
right lifting property with respect to $\Lambda^2_1\subseteq \Delta^2$. We
use the notation of Remark \ref{4re:g}. Let $\gamma$ be a $2$-simplex of
$\Map(\square^n\times \Delta^{[n_k]_{k\in K}},X)$ such that the restriction
of $\gamma$ to $\Lambda^2_1$ factorizes through $Y$. We regard $\gamma$ as a
map $\Delta^2\times (\square^n\times \Delta^{[n_k]_{k\in K}})\to X$. For any
square in $\cF_\alpha*\cF_\beta$ of the form \eqref{4eq:Komp}, consider the
map $\Delta^2\times \Delta^1\to \Delta^2\times (\square^n\times
\Delta^{[n_k]_{k\in K}})$ as shown by the diagram
\[\xymatrix{(0,y')\ar[r]\ar[d] & (0,y)\ar[d]\\
(1,y')\ar[r]\ar[d] & (1,y)\ar[d]\\
(2,x')\ar[r] & (2,x).}
\]
By assumption, $\gamma$ carries the upper and lower squares to squares in
$\cT_{\alpha\beta}$. (We could replace the second row by $(1,x')\to (1,x)$
without affecting the validity of the argument.) Since $\cT_{\alpha\beta}$
is stable under composition in the first direction, $\gamma$ carries the
outer square to a square in $\cT_{\alpha\beta}$. Therefore, the restriction
of $\gamma$ to $\Delta^{\{0,2\}}$ is an edge of $Y$.

The second assertion of (1) follows from the first assertion of (1) and the
fact that $\RES_2$ is an inner fibration if $X$ is an $\infty$-category
(Remark \ref{4re:fibration}).

For (2), note that by Remark \ref{4re:fibration}, we have a diagram with
pullback square
\[\xymatrix{\Komp^\alpha(\tau)\ar[r]
&Z\ar[r]\ar[d] & Y\ar[d]^-{\RES_3\circ g}\\
&\{\tau\}\ar[r] & \Map(D^n\times\Delta^{[n_k]_{k\in K}}_L,X),}
\]
where $Z$ denotes the fiber of the map $\RES_3\circ g$ at $\tau$, and the
map $\Komp^\alpha(\tau)\to Z$ is a pullback of the map $\RES_4$ in Remark
\ref{4re:fibration}, hence an inner fibration. Thus it suffices to show that
$Z$ is an $\infty$-category. Since $\RES_3$ is an inner fibration and $g$
satisfies the right lifting property with respect to every horn inclusion
$\Lambda^m_i\subseteq \Lambda^m$ for $m\ge 3$, it suffices to check that $Z$
satisfies the extension property with respect to $\Lambda^2_1\subseteq
\Delta^2$. Let $f\colon \Lambda^2_1\to Z$ be a map. Unwinding the
definition, to show that $f$ extends to a map $\Delta^2\to Z$, we are
reduced to showing the extension property
\[\xymatrix{(A,A_1\cap \cG)\ar@{^{(}->}[d]\ar[r]^-{f'} & (X,\cE_\alpha),\\
(B,\cG)\ar@{..>}[ru]}
\]
where we have $(B,\cG)=(\Delta^2)^\sharp\times (\square^n\times
\Delta^{[n_k]_{k\in K}}_L,\cF_\alpha)$ and
\[A=\Lambda^2_1\times
(\square^n\times \Delta^{[n_k]_{k\in K}}_L)\coprod_{\Lambda^2_1\times
(D^n\times \Delta^{[n_k]_{k\in K}}_L)} \Delta^2\times (D^n\times
\Delta^{[n_k]_{k\in K}}_L),
\]
and $f'$ is the amalgamation of $f$ and $\tau$. Every edge in $\cG$ that is
not in $A$ has the form $(0,y)\to (2,x)$ with $y\to x$ in $\cF_\alpha$, and
can be extended to a $2$-simplex of $B$
\[\xymatrix{&(1,y)\ar[rd]\\
(0,y)\ar[ru]\ar[rr] &&(2,x),}
\]
where the oblique edges are in $A_1\cap \cG$. (Again we could replace
$(1,y)$ by $(1,x)$.) Therefore, it suffices to apply Lemma \ref{3le:comp}.
\end{proof}

We now give a criterion for the weak contractibility of certain
$\infty$-categories of compactifications.

\begin{theorem}\label{4pr:descent}
Let $(\cC,\cE_1,\cE_2)$ be a $2$-marked $\infty$-category. Suppose that the
following conditions are satisfied:
\begin{enumerate}
  \item $\cE_1$ and $\cE_2$ are composable (Definition \ref{3de:edges}).

  \item The $\infty$-category $\cC_{\cE_1}$ admits pullbacks and
      pullbacks are preserved by the inclusion $\cC_{\cE_1}\subseteq
      \cC$.

  \item For every morphism $f$ of $\cC$, there exists a $2$-simplex of
      $\cC$ of the form
      \begin{equation}\label{4eq:2cell}
      \xymatrix{&y\ar[rd]^p\\z\ar[ru]^q\ar[rr]^f && x}
      \end{equation}
      with $p\in\cE_1$ and $q\in\cE_2$.
\end{enumerate}
Then, for every $n$-simplex $\tau$ of $\cC$, the simplicial set
$\Komp^1_{\cC,\cE_1,\cE_2}(\tau)^{op}$ is a filtered $\infty$-category and
is weakly contractible. Moreover, the natural map
\[\delta_2^*\cC_{\cE_1,\cE_2} \to \cC\]
is a categorical equivalence.
\end{theorem}

Recall that an $\infty$-category is said to be \emph{filtered}
\cite{Lu1}*{Definition 5.3.1.7} if it satisfies the extension property with
respect to the inclusion $A\subseteq A^\triangleright$ for every finite
simplicial set $A$. Recall also that an ordinary category is filtered if and
only if its nerve is a filtered $\infty$-category \cite{Lu1}*{Proposition
5.3.1.13}. Thus in the case where $\cC$ is the nerve of an ordinary
category, the first assertion of Theorem \ref{4pr:descent} generalizes
\cite{SGA4XVII}*{Proposition 3.2.6}.

\begin{remark}
Condition (2) of Theorem \ref{4pr:descent} is satisfied if the following
conditions are satisfied:
\begin{enumerate}[(a)]
  \item morphisms in $\cE_1$ admit pullbacks in $\cC$ by morphisms in
      $\cE_1$;

  \item $\cE_1$ is stable under pullback by $\cE_1$;

  \item for every $2$-simplex of $\cC$ of the form \eqref{4eq:2cell} such
      that $f$ and $p$ are in $\cE_1$, $q$ is in $\cE_1$.
\end{enumerate}
Indeed, Condition (c) implies that for every diagram $a\colon A\to \cC$,
where $A$ is a nonempty simplicial set, the overcategory $(\cC_{\cE_1})_{/a}$
is a full subcategory of $\cC_{/a}$, so that a diagram $\bar a\colon
A^\triangleleft\to \cC_{\cE_1}$ is a limit diagram if the composition
$A^\triangleleft \xto{\bar a}\cC_{\cE_1}\to \cC$ is a limit diagram. Note
that Conditions (b) and (c) hold if $\cE_1$ is admissible (Definition
\ref{3de:admissible_edge}).
\end{remark}

\begin{proof}[Proof of Theorem \ref{4pr:descent}]
For brevity we write $\Komp^1(\tau)$ for $\Komp^1_{\cC,\cE_1,\cE_2}(\tau)$.
Since $\cE_1$ is composable, $\Komp^1(\tau)$ is an $\infty$-category by Lemma
\ref{4le:infinity_category}. It suffices to show that $\Komp^1(\tau)^{op}$ is
filtered. In fact, every filtered $\infty$-category is weakly contractible
\cite{Lu1}*{Lemma 5.3.1.18}. The last assertion of the proposition then
follows from Corollary \ref{4co:multi2}.

By \cite{Lu1}*{Remark 5.3.1.10}, $\Komp^1(\tau)^{op}$ is filtered if and
only if $\Komp^1(\tau)$ has the extension property with respect to the
inclusion $A\subseteq A^\triangleleft$ whenever $A$ is the nerve of a finite
partially ordered set. We fix such an $A$ and proceed by induction on $n$.
For $n=0$, $\Komp^1(\tau)$ is a point and the assertion holds trivially.

For $n\ge 1$, by the induction hypothesis, the composite map $f_{-1}\colon
A\xrightarrow{f}\Komp^1(\tau)\to \Komp^1(\tau\circ d^n_n)$ extends to
$g_{-1}\colon A^\triangleleft\to \Komp^1(\tau\circ d^n_n)$. We identify
$\CCpt^{n-1}$ with its image under $d^n_n$, hence with the full subcategory
of $\CCpt^n$ spanned by the objects $(i,j)$, $1\le i\le j\le n-1$. For
$0\leq k\leq n$, consider the full subcategory $\CCpt^n_k$ of $\CCpt^n$
spanned by $\CCpt^{n-1}$ and the objects $(i,n)$ with $n-k\leq i\leq n$. We
have $\CCpt^{n-1}\subseteq \CCpt^n_0 \subseteq \dots \subseteq
\CCpt^n_n=\CCpt^n$. Similarly we define $\Cpt^n_k\subseteq \Cpt^n$. Define
$\Komp^1_k(\tau)$ similarly to $\Komp^1(\tau)$ but with $\CCpt^n$, $\Cpt^n$
and $\Box^n=\delta_2^*\Cpt^n$ replaced by $\CCpt^n_k$, $\Cpt^n_k$ and
$\delta_2^*\Cpt^n_k$, respectively. We show by induction on $k$ that there
exists a map $g_k\colon A^\triangleleft \to \Komp^1_k(\tau)$ compatible with
$f_k$ and $g_{k-1}$, where $f_k$ is the composition of $f$ and the natural
map $\Komp^1(\tau)\to\Komp^1_k(\tau)$, rendering the following diagram
commutative:
\[\xymatrix{
A \ar[dd] \ar[rd]^(.6){f=f_n} \ar@/^0.6pc/[rrrd]_-{f_k}\ar@/^0.6pc/[rrrrd]^(.7){f_{k-1}}\ar@/^1pc/[drrrrrr]^(.7){f_{-1}} \\
& \Komp^1(\tau) \ar[r] & \cdots \ar[r] &\Komp^1_k(\tau) \ar[r] & \Komp^1_{k-1}(\tau)\ar[r] &\cdots \ar[r] & \Komp^1(\tau\circ d^n_n). \\
A^\triangleleft \ar@/_1pc/[urrrrrr]_(.7){g_{-1}}
\ar@{..>}@/_0.6pc/[urrr]^-{g_k}\ar@{..>}@/_0.6pc/[urrrr]_(.7){g_{k-1}}  \ar@{..>}[ur]_(.6){g_n} }
\]
The map $g_n$ will allow us to conclude the proof of the proposition.

Below are the Hasse diagrams of (the homotopy categories of) $\CCpt^3_0$ and
$\CCpt^3_2$, respectively. Bullets in the first diagram represent vertices in
the image of the diagonal embedding $\Delta^3\subseteq \CCpt^3_0$. Bullets in
the second diagram represent vertices in the image of the embedding
$\Delta^2\to \CCpt^3_2$ defined later in the proof.
\[\begin{xy}
(0,5)="00"; (5,5)*\cir<1.8pt>{}="01"**\dir{-};
(10,5)*\cir<1.8pt>{}="02"**\dir{-};
(5,0)="11"; (10,0)*\cir<1.8pt>{}="12"**\dir{-};
(10,-5)="22";
(15,-10)="33"**\dir{-};
"01"; "11"**\dir{-};
"02"; "12"**\dir{-}; "22"**\dir{-};
"00"*{\bullet}; "11"*{\bullet}; "22"*{\bullet}; "33"*{\bullet};
\end{xy}\qquad\qquad\begin{xy}
(0,5)*\cir<1.8pt>{}="00"; (5,5)*\cir<1.8pt>{}="01"**\dir{-};
(10,5)*\cir<1.8pt>{}="02"**\dir{-};
(5,0)*\cir<1.8pt>{}="11"; (10,0)="12"**\dir{-};
(15,0)="13"**\dir{-};
(10,-5)*\cir<1.8pt>{}="22";
(15,-5)="23"**\dir{-};
(15,-10)*\cir<1.8pt>{}="33"**\dir{-};
"01"; "11"**\dir{-};
"02"; "12"**\dir{-}; "22"**\dir{-};
"13"; "23"**\dir{-};
"12"*{\bullet}; "13"*{\bullet}; "23"*{\bullet};
\end{xy}
\]

We first consider the case $k=0$. The map $f_0$ (resp.\ $g_{-1}$)
corresponds to a map $\tilde{f}_0\colon A\times\CCpt^n_0\to\cC$ (resp.\
$\tilde{g}_{-1}\colon A^\triangleleft\times\CCpt^{n-1}\to\cC$). To find the
desired map $g_0$, it suffices to construct a map $\tilde{g}_0\colon
A^\triangleleft\times\CCpt^n_0\to\cC$, extending $\tilde{f}_0$ and
$\tilde{g}_{-1}$ and the composition
$A^\triangleleft\times\Delta^n\to\Delta^n\xrightarrow{\tau}\cC$, where the
first map is the projection, via the diagonal embedding $\Delta^n\subseteq
\CCpt^n_0$. This follows if $\cC$ has the extension property with respect to
the smash product of $A\subseteq A^\triangleleft$ and
$\CCpt^{n-1}\coprod_{\Delta^{n-1}}\Delta^n\subseteq\CCpt^n_0$. However, the
latter inclusion is inner anodyne by Lemma \ref{6le:union} applied to
$Q=[n]^{op}$ and $R=(\RCpt^{n-1})^{op}$. Thus we may find the map
$\tilde{g}_0$ by \cite{Lu1}*{Corollary 2.3.2.4} as $\cC$ is an
$\infty$-category.

For $1\le k\le n$, consider the full subcategory $\Delta^2\subseteq
\CCpt^n_k$ spanned by $\{(n-k,n-1),(n-k,n),(n-k+1,n)\}$. We identify
$\Delta^{\{0,2\}}$ with the subcategory of $\CCpt^n_{k-1}$ spanned by
$\{(n-k,n-1),(n-k+1,n)\}$. The inclusion
$\CCpt^n_{k-1}\coprod_{\Delta^{\{0,2\}}}\Delta^2\subseteq \CCpt^n_k$ is
inner anodyne by Lemma \ref{6le:cut}, and so is its smash product
\[S\colonequals \left(A^\triangleleft \times
\left(\CCpt^n_{k-1}\coprod_{\Delta^{\{0,2\}}}\Delta^2\right)\right)\cup(A\times \CCpt^n_k)\subseteq A^\triangleleft \times \CCpt^n_k\]
with $A\subseteq A^\triangleleft$. We define $\cG_1$ and $\cG_2$ by
\[(A^\triangleleft\times \delta_2^*\Cpt^n_k,\cG_1,\cG_2)\simeq
(A^\triangleleft)^{\sharp^2_{\{1\}}} \times \delta_{2+}^{*}\Cpt^n_k.
\]
We let $-\infty$ denote the cone point of $A^\triangleleft$. Any edge in
$\cG_1$ but not in $S$ has the form $(-\infty,n-k,n)\to (l,i,n)$ with $l$ in
$A^\triangleleft$ and $i>n-k+1$, and can be extended to a $2$-simplex
\[\xymatrix{&(l,n-k+1,n)\ar[rd]\\
(-\infty,n-k,n)\ar[ru]\ar[rr]
&&(l,i,n)}
\]
with oblique edges in $S_1\cap \cG_1$. Any edge in $\cG_2$ but not in $S$
has the form $(-\infty,n-k,j)\to (-\infty,n-k,n)$ with $j<n-1$ and can be
extended to a $2$-simplex
\[\xymatrix{&(-\infty,n-k,n-1)\ar[rd]\\
(-\infty,n-k,j)\ar[ru]\ar[rr]
&&(-\infty,n-k,n)}
\]
with oblique edges in $S_1\cap \cG_2$. Thus, by Condition (1) and Lemma
\ref{3le:comp}, it suffices to construct a map $(S,S_1\cap \cG_1,S_1\cap
\cG_2)\to (\cC,\cE_1,\cE_2)$ extending the amalgamation $v\colon
V\colonequals A\times \CCpt^n_k\coprod_{A\times \CCpt^n_{k-1}}
A^\triangleleft\times \CCpt^n_{k-1}\to \cC$ of $\tilde f_k$ and $\tilde
g_k$, where $\tilde{f}_k\colon A\times\CCpt^n_k\to\cC$ (resp.\
$\tilde{g}_{k-1}\colon A^\triangleleft\times\CCpt^{n}_{k-1}\to\cC$) is the
map given by $f_k$ (resp.\ $g_{k-1}$). For this, it suffices to construct a
map $(A^\triangleleft)^{\sharp^2_{\{1\}}}\times T\to(\cC,\cE_1,\cE_2)$
extending the amalgamation of $\tilde{f}_k\res A\times\Delta^2$ and
$\tilde{g}_{k-1}\res A^\triangleleft\times\Delta^{\{0,2\}}$. Here
$T=(\Delta^2,\cF_1,\cF_2)$ is the $2$-marked simplicial set with $\cF_1$
(resp.\ $\cF_2$) consisting of the degenerate edges and the edge $1\to 2$
(resp.\ $0\to 1$).

We now lift $v$ to a map $V\to \cC_{/\tau(n)}$, corresponding to a map
$(V^\triangleright,\cG'_1,\cG'_2)\to (\cC,\cE_1,\cE_2)$, where $\cG'_1$ is
the union of $(V_1\cap \cG_1)\cup \{\id_{+\infty}\}$ and all edges
$(l,i,n)\to +\infty$ in $V^\triangleright$ for $l\in A^\triangleleft$, and
$\cG'_2\colonequals (V_1\cap \cG_2)\cup \{\id_{+\infty}\}$. Here $+\infty$
denotes the cone point of $V^\triangleright$. Consider the inclusion
$\iota\colon A^\triangleleft \to V$ induced by the inclusion
$\{(n,n)\}\subseteq \CCpt^n_{k-1}$. Since the restriction of $v$ to
$A^\triangleleft$ is constant of value $\tau(n)$, the amalgamation of $v$
and the constant map $A^{\triangleleft\triangleright}\to \cC$ of value
$\tau(n)$ provides a map $v'\colon
(C^\triangleright(\iota),\cG''_1,\cG'_2)\to (\cC,\cE_1,\cE_2)$, where we
have $C^\triangleright(\iota)\colonequals V\coprod_{A^\triangleleft}
A^{\triangleleft\triangleright}$, and $\cG''_1$ is the intersection of
$\cG'_1$ and the set of edges of $C^\triangleright(\iota)$. Since the
inclusions $\{(n,n)\}\subseteq \CCpt^n_{k-1}$ and $\{(n,n)\}\subseteq
\CCpt^n_{k}$ are right anodyne by \cite{Lu1}*{Lemma 4.2.3.6}, and so are
their products with identity maps \cite{Lu1}*{Corollary 2.1.2.7}, the
inclusion $A^\triangleleft=A\coprod_A A^\triangleleft\subseteq V$ is right
anodyne by Lemma \ref{2le:pushout_anodyne}. By \cite{Lu1}*{Lemma 2.1.2.3},
it follows that the inclusion $C^\triangleright(\iota)\subseteq
V^\triangleright$ is inner anodyne. Every edge in $\cG'_1$ that is not in
$\cG''_1$ has the form $(l,i,n)\to +\infty$ and can be extended to a
$2$-simplex
\[\xymatrix{&(l,n,n)\ar[rd]\\(l,i,n)\ar[rr]\ar[ru]&&+\infty}\]
with oblique edges in $\cG''_1$. Lemma \ref{3le:comp} then provides the
desired extension of $v'$ and hence $v$.

We are therefore reduced to showing that every map
\[a\colon A^{\sharp^2_{\{1\}}}\times T \coprod_{A^{\sharp^2_{\{1\}}}\times (\Delta^{\{0,2\}})^{\flat^2}}
(A^\triangleleft)^{\sharp^2_{\{1\}}}\times (\Delta^{\{0,2\}})^{\flat^2}\to (\cC_{/x},\cE'_1,\cE'_2)
\]
whose restriction to $A\times\Delta^{\{1,2\}}\coprod_{A\times
\Delta^{\{2\}}} A^\triangleleft \times \Delta^{\{2\}}$ factorizes through
$(\cC_{\cE_1})_{/x}$ extends to a map
$(A^\triangleleft)^{\sharp^2_{\{1\}}}\times T\to (\cC_{/x},\cE'_1,\cE'_2)$.
Here $x$ is an object of $\cC$ and $\cE'_i$ denotes the inverse image of
$\cE_i$ via the map $\cC_{/x}\to \cC$ for $i=1,2$. Recall that $A$ is the
nerve of a partially ordered set. We let $B\subseteq A^\triangleleft\times
\Delta^2$ denote the full subcategory spanned by all vertices except
$(-\infty,1)$. Consider the commutative diagram of inclusions
\[\xymatrix{A\times\Delta^{\{0,2\}}\ar[r]\ar[d] & A\times \Delta^2\ar[d]\ar[rd]\\
(A\times \Delta^{\{0,2\}})^\triangleleft \ar[r]\ar[d] &
A\times \Delta^2\coprod_{A\times\Delta^{\{0,2\}}}(A\times \Delta^{\{0,2\}})^\triangleleft\ar[r]^-{h}\ar[d]
&(A\times \Delta^2)^\triangleleft\ar[d]\\
A^\triangleleft\times \Delta^{\{0,2\}}\ar[r] & A\times \Delta^2\coprod_{A\times\Delta^{\{0,2\}}}A^\triangleleft\times \Delta^{\{0,2\}}\ar[r]^-{h'} & B}
\]
where the lower left (resp.\ right) vertical arrow carries the cone point of
$(A\times\Delta^{\{0,2\}})^\triangleleft$ (resp.\
$(A\times\Delta^2)^\triangleleft$) to $(-\infty,0)$, and the squares on the
left are clearly pushouts. For any simplex $\sigma$ of $B$, if $\sigma$ is
not a simplex of $(A\times \Delta^2)^\triangleleft$, then $(-\infty,2)$ is a
vertex of $\sigma$, so that $\sigma$ is a simplex of $A^\triangleleft\times
\Delta^{\{0,2\}}$. Thus $h'$ is a pushout of $h$, which is inner anodyne by
\cite{Lu1}*{Lemma 2.1.2.3}, since the inclusion $A\times \Delta^{\{0,2\}}
\subseteq A\times \Delta^2$ is left anodyne by \cite{Lu1}*{Corollary
2.1.2.7}. Thus $a$ extends to a map $a'\colon B\to\cC_{/x}$. We would like
to apply \cite{Lu1}*{Lemma 4.3.2.13} to conclude that there exists a right
Kan extension $b\colon A^\triangleleft\times \Delta^2\to \cC_{/x}$ of $a'$.
The only condition we need to check for this is that the induced diagram
$B_{(-\infty,1)/}\to B\xrightarrow{a'}\cC_{/x}$ has a limit. However, the
composite map factorizes through $a_0\colon
B_{(-\infty,1)/}\to(\cC_{\cE_1})_{/x}$. By Condition (2) and Lemma
\ref{4le:over_pull} below, the $\infty$-category $(\cC_{\cE_1})_{/x}$ admits
finite limits and such limits are preserved by the inclusion
$(\cC_{\cE_1})_{/x}\subseteq\cC_{/x}$. We therefore obtain a limit diagram
$b_0\colon A^\triangleleft \times \Delta^{\{1,2\}}\to (\cC_{\cE_1})_{/x}$
extending $a_0$ and a right Kan extension $b$ of $a'$. The restriction of
$b$ to $(A^\triangleleft \times \Delta^{\{1,2\}})\cup B$ is equivalent to
the amalgamation $b_1$ of $b_0$ and $a'$. Thus, by \cite{Lu1}*{Lemma
2.4.6.3}, up to replacing $b$ by an extension of $b_1$, we may assume that
$b\res A^\triangleleft \times \Delta^{\{1,2\}}$ factorizes through
$(\cC_{\cE_1})_{/x}$.

Note that $b$ does not necessarily carry the edge
$(-\infty,0)\to(-\infty,1)$ into $\cE'_2$, which is the last requirement to
conclude that $b$ gives rise to the desired extension
$(A^\triangleleft)^{\sharp^2_{\{1\}}}\times T\to (\cC_{/x},\cE'_1,\cE'_2)$.
To overcome this problem, we apply Condition (3) to the arrow
$b((-\infty,0)\to (-\infty,1))$ to get a $2$-simplex $\gamma$ of $\cC_{/x}$.
Consider the totally ordered set $I=\{0<1^-<1<2\}$, which contains
$[2]=\{0<1<2\}$. The amalgamation of $\gamma$ and $b$ is a map $c\colon K\to
\cC_{/x}$, where
\[K\colonequals A^\triangleleft \times\Delta^2\coprod_{\{-\infty\}\times \Delta^1}
\{-\infty\}\times \Delta^{\{0,1^-,1\}}\subseteq A^\triangleleft \times \Delta^I,
\]
with $c((-\infty,0)\to (-\infty,1^-))\in\cE'_2$ and $c((-\infty,1^-)\to
(-\infty,1))\in\cE'_1$. We let $\cF'_1$ (resp.\ $\cF'_2$) denote the set of
all degenerate edges of $\Delta^I$ and all edges of $\Delta^{\{1^-,1,2\}}$
(resp.\ $\Delta^{\{0,1^-\}}$). Consider the pushout
\[(L,\cH_1,\cH_2)=(A^\triangleleft)^{\sharp^2_{\{1\}}} \times
(\Delta^I,\cF'_1,\cF'_2)\coprod_{(A\times \Delta^I)^{\flat^2}} (A\times \Delta^2)^{\flat^2}
\]
given by the degeneracy
map $I\to [2]$ identifying $1^-$ and $1$. The inclusion $K\subseteq L$
induced by the inclusion $K\subseteq A^\triangleleft \times \Delta^I$ is a
pushout of the inclusion
\begin{multline*}
r\colon (\{-\infty\}\times \Delta^0)\star (A^\triangleleft \times \Delta^{\{1,2\}})
\coprod_{(\{-\infty\}\times \Delta^0)\star (\{-\infty\}\times
\Delta^{\{1\}})} (\{-\infty\}\times \Delta^{\{0,1^-\}})\star
(\{-\infty\}\times \Delta^{\{1\}}) \\
\to (\{-\infty\}\times\Delta^{\{0,1^-\}})\star (A^\triangleleft \times
\Delta^{\{1,2\}}).
\end{multline*}
Indeed, for any simplex $\sigma$ of $L$, if $(-\infty,1^-)$ is a vertex of
$\sigma$, then $\sigma$ is a simplex of the target of $r$; otherwise
$\sigma$ is a simplex of $A^\triangleleft\times \Delta^2$. Moreover, $r$ is
inner anodyne by \cite{Lu1}*{Lemma 2.1.2.3}, since the inclusion
$\{-\infty\}\times \Delta^{\{1\}}\subseteq A^\triangleleft\times
\Delta^{\{1,2\}}$ is left anodyne by \cite{Lu1}*{Lemma 4.2.3.6}. Note that
we have $\cH_2\subseteq K_1$ and $c$ induces a map $(K,K_1\cap
\cH_1,\cH_2)\to (\cC_{/x},\cE'_1,\cE'_2)$. Moreover, any edge in $\cH_1$
that is not in $K$ has the form $(-\infty,1^-)\to (l,m)$ with $m\ge 1$ and
can be extended to a $2$-simplex
\[\xymatrix{&(-\infty,1)\ar[rd]\\
(-\infty,1^-)\ar[rr]\ar[ru]&&(l,m)}
\]
with oblique arrows in $K_1\cap \cH_1$. Thus, by Condition (1) and Lemma
\ref{3le:comp}, $c$ extends to a map $c'\colon (L,\cH_1,\cH_2)\to
(\cC_{/x},\cE'_1,\cE'_2)$. The restriction of $c'$ to $A^\triangleleft \times
\Delta^{\{0,1^-,2\}}\simeq A^\triangleleft \times \Delta^2$ provides the
desired extension.
\end{proof}

\begin{lemma}\label{4le:over_pull}
Let $\cC$ and $\cD$ be $\infty$-categories and $f\colon \cC\to \cD$ a
functor. Assume that $\cC$ admits pullbacks and pullbacks are preserved by
$f$. Then, for any object $x$ of $\cC$, the overcategory $\cC_{/x}$ admits
finite limits and such limits are preserved by the functor $f'\colon
\cC_{/x}\to \cD_{/f(x)}$.
\end{lemma}

\begin{proof}
The morphism $\id_x$ is a final object of $\cC_{/x}$ and
$f(\id_x)=\id_{f(x)}$ is a final object of $\cD_{/f(x)}$. By Lemma
\ref{4le:over} below, $\cC_{/x}$ admits pullbacks and the functors
$\cC_{/x}\to \cC$ and $\cD_{/f(x)}\to \cD$ preserve pullbacks. Since the
latter is conservative, the functor $f'$ preserves pullbacks. We conclude by
\cite{Lu1}*{Corollaries 4.4.2.4, 4.4.2.5}.
\end{proof}

\begin{lemma}\label{4le:over}
Let $A$ and $B$ be simplicial sets. Assume that $B$ is weakly contractible.
Let $\cC$ be an $\infty$-category and $p\colon A\to \cC$ a diagram. Then a
diagram $f\colon B\to \cC_{/p}$ admits a limit if and only if the
composition $B\xto{f} \cC_{/p}\to \cC$ admits a limit. Moreover, $\bar
f\colon B^\triangleleft\to \cC_{/p}$ is a limit diagram if and only if the
composition $B^\triangleleft \xto{\bar f} \cC_{/p}\to \cC$ is a limit
diagram.
\end{lemma}

This applies in particular to the case where $B=\Lambda^2_2$. In this case
we have $B^\triangleleft\simeq \Delta^1\times \Delta^1$.

\begin{proof}
We let $q\colon B\star A\to \cC$ denote the diagram corresponding to $f$. We
let $q_0$ denote the restriction of $q$ to $B$. Since the inclusion
$B\subseteq B\star A$ is left anodyne by \cite{Lu1}*{Lemma 4.2.3.6}, the map
$\cC_{/q}\to \cC_{/q_0}$ is a trivial Kan fibration by
\cite{Lu1}*{Proposition 2.1.2.5}. Therefore, $\cC_{/q}$ admits a final
object if and only if $\cC_{/q_0}$ admits a final object, and an object of
$\cC_{/q}$ is a final object if and only if its image in $\cC_{/q_0}$ is a
final object.
\end{proof}

\begin{remark}\label{4re:deligne}
In the situation of Theorem \ref{4pr:descent}, for every $\infty$-category
$\cD$, the functor
\begin{equation}\label{4eq:fun}
\Fun(\cC,\cD)\to \Fun(\delta_2^*\cC_{\cE_1,\cE_2},\cD)
\end{equation}
is an equivalence of $\infty$-categories. This generalizes Deligne's gluing
result \cite{SGA4XVII}*{Proposition 3.3.2}, which can be interpreted as
saying that \eqref{4eq:fun} induces a bijection between the sets of
equivalence classes of objects when $\cC$ is the nerve of an ordinary
category and $\cD=\N(\cat)$.
\end{remark}

In the remaining part of this section, we will study a variant of the
diagonal functor $\delta_2^*\colon \Sset[2]\to \Sset$, which will allow,
among other things, to express the $\infty$-category of correspondences in
\cite{Gait} in terms of our multisimplicial nerves. This will not be used in
the later sections of this article. Therefore, the uninterested reader may
safely skip the remaining part of this section and proceed to Section
\ref{5ss}.

\begin{definition}\label{4de:correspondence}
Let $X$ be a bisimplicial set. We let $\delta_{2\nabla}^*X$ denote the
simplicial set defined by
$(\delta_{2\nabla}^*X)_n=\Hom_{\Sset[2]}(\Cpt^n,X)$. This defines a functor
$\delta_{2\nabla}^*\colon \Sset[2]\to \Sset$.
\end{definition}

Recall that we have $(\delta_{2}^*X)_n\simeq \Hom_{\Sset[2]}(\Delta^{n,n},
X)$.

\begin{theorem}\label{4th:correspondence}
The map
\[f\colon \delta_2^* X\to \delta_{2\nabla}^* X\]
induced by the inclusions $\Cpt^n\subseteq\Delta^{n,n}$ is a categorical
equivalence.
\end{theorem}

Under our convention of representing the first direction vertically and
second direction horizontally as in \eqref{4eq:Cpt}, the map can be
described as ``forgetting the lower-left corner''. Before proving the
theorem, let us look at a few examples.

\begin{example}
For $X=\Cpt^n$, we have a canonical isomorphism $\CCpt^n\simeq
\delta_{2\nabla}^*\Cpt^n$. An $m$-simplex $\alpha$ of $\CCpt^n$ is given by
a sequence $(i_0,j_0)\le \dots \le (i_m,j_m)$ in $\RCpt^n$. The isomorphism
carries $\alpha$ to the $m$-simplex of $\delta_{2\nabla}^*\Cpt^n$ given by
the map of bisimplicial sets $\Cpt^m\to \Cpt^n$ carrying $(a,b)$ to
$(i_a,j_b)$. The map $f$ can be identified with the inclusion
$\square^n\subseteq \CCpt^n$, which is inner anodyne (Lemma
\ref{4le:cpt_inner}), and in particular a categorical equivalence.
\end{example}

\begin{example}
In the situation of Theorem \ref{4pr:descent}, there exists a non-canonical
categorical equivalence $\delta_{2\nabla}^*\cC_{\cE_1,\cE_2}\to \cC$ by
Theorem \ref{4th:correspondence} applied to the bisimplicial set
$\cC_{\cE_1,\cE_2}$.
\end{example}

\begin{example}\label{4ex:corr}
Given a $2$-marked $\infty$-category $(\cC,\cE_1,\cE_2)$ satisfying certain
conditions, Gaitsgory defined an $\infty$-category of correspondences
$\cC_{\corr:\cE_1,\cE_2}$ \cite{Gait}*{5.1.2} ($\cE_1=vert$, $\cE_2=horiz$
in his notation) following an idea of Lurie. More generally, given an
\emph{arbitrary} $2$-marked $\infty$-category $(\cC,\cE_1,\cE_2)$, using the
above functor $\delta_{2\nabla}^*$, one can define the \emph{simplicial set
of correspondences} to be
\[\cC_{\corr:\cE_1,\cE_2}\colonequals
\delta_{2\nabla}^*(\op^2_{\{2\}}\cC^\cart_{\cE_1,\cE_2}).
\]
In other words,
we have
\[(\cC_{\corr:\cE_1,\cE_2})_n=\Hom_{\Sset[2]}(\Cpt^n,\op^2_{\{2\}}\cC^\cart_{\cE_1,\cE_2}).\]
Applying Theorem \ref{4th:correspondence} to the bisimplicial set
$\op^2_{\{2\}}\cC^\cart_{\cE_1,\cE_2}$, we know that the natural map
\[\delta^*_{2,\{2\}}\cC^\cart_{\cE_1,\cE_2}\to\cC_{\corr:\cE_1,\cE_2},\]
given by ``forgetting the lower-right corner'', is a categorical
equivalence.
\end{example}

\begin{proof}[Proof of Theorem \ref{4th:correspondence}]
The proof is very similar to that of Theorem
\ref{4th:multisimplicial_descent}. Consider a commutative diagram
\[\xymatrix{\delta_2^* X\ar[r]^-v\ar[d]_f & \Fun(\Delta^l,\cD)\ar[d]^p\\
\delta_{2\nabla}^* X\ar[r]^-w & \Fun(\partial \Delta^l,\cD)}\]
as in Lemma \ref{1le:categorical_equivalence}. Let $\sigma$ be an $n$-simplex
of $\delta_{2\nabla}^* X$, corresponding to a map $\tau\colon \Cpt^n\to X$.
Consider the commutative diagram
\begin{align}\label{7eq:descent}
\xymatrix{\cN(\sigma)\ar[d]\ar[r] & \Fun(\Delta^l\times\CCpt^n,\cD)
\ar[d]^-{\RES_1}\ar[rr]^-{\RES_2} && \Fun(\Delta^l\times \Delta^n,\cD)\ar[d]\ar@/^4pc/[dd]^{\RES_4}\\
\Delta^0\ar[r]^-{h}\ar[rd]_{v\circ \delta_2^* \tau} &  \Fun(H,\cD)\ar[r]\ar[d]&
\Fun(\partial \Delta^l\times \CCpt^n,\cD)\ar[r]^-{\RES_2} & \Fun(\partial\Delta^l\times\Delta^n,\cD)\\
&\Fun(\Delta^l\times \square^n,\cD)\ar[rr]^-{\RES_3}&&\Fun(\Delta^l\times D^n,\cD).}
\end{align}
In the above diagram,
\begin{itemize}
  \item $H$ and the maps $\RES_i$, $1\le i\le 4$ are defined as in the
      proof of Theorem \ref{4th:multisimplicial_descent};

  \item $h$ is the amalgamation of $v\circ \delta_2^*\tau\colon
      \square^n\to \Fun(\Delta^l,\cD)$ and $w\circ
      \delta_{2\nabla}^*\tau\colon \CCpt^n\to
      \Fun(\partial\Delta^l,\cD)$;

  \item $\cN(\sigma)$ is defined so that the upper left square is a
      pullback square;

  \item the unnamed arrows in the middle column and in the upper right
      square are obvious restrictions.
\end{itemize}

By \cite{Lu1}*{Corollaries 2.3.2.4, 2.3.2.5}, the map $j\colon
H\hookrightarrow \Delta^l\times \CCpt^n$ is inner anodyne, and consequently
$\RES_1$ is a trivial Kan fibration. It follows that $\cN(\sigma)$ is a
contractible Kan complex.

We let $\Phi(\sigma)\colon\cN(\sigma)\to\Fun(\Delta^l\times \Delta^n,\cD)$
denote the composition of the upper horizontal arrows in \eqref{7eq:descent}.
Then $\Phi(\sigma)$ induces a map
\[\cN(\sigma)^\sharp
\times(\Delta^n)^\flat\to\Fun(\Delta^l,\cD)^\flat\subseteq
\Fun(\Delta^l,\cD)^{\natural}.
\]
Thus $\Phi(\sigma)$ induces a map $\cN(\sigma)\to
\Map^\sharp((\Delta^n)^\flat, \Fun(\Delta^l,\cD)^{\natural})$, which we still
denote by $\Phi(\sigma)$. This construction is functorial in $\sigma$, giving
rise to a morphism $\Phi\colon \cN\to \Map[ \delta_{2\nabla}^*
X,\Fun(\Delta^l,\cD)]$ in the category $(\Sset)^{(\del_{/ \delta_{2\nabla}^*
X})^{op}}$.

The composition $\Delta^n\hookrightarrow \CCpt^n\xto{\delta^*_{2\nabla}\tau}
X$, where the first map is the diagonal embedding, is $\sigma$. Thus the
composition of the middle row of \eqref{4eq:descent} is given by $w(\sigma)$.
Thus $\Map[ \delta_{2\nabla}^* X,p]\circ \Phi\colon\cN\to \Map[
\delta_{2\nabla}^* X,\Fun(\partial\Delta^l,\cD)]$ factorizes through the
morphism $\Delta^0_{(\del_{/ \delta_{2\nabla}^* X})^{op}}\to \Map[
\delta_{2\nabla}^* X,\Fun(\partial \Delta^l,\cD)]$ corresponding to $w$ via
Remark \ref{2re:functors}.

Now let $\sigma'$ be an $n$-simplex of $\delta_2^* X$ corresponding to a map
$\tau'\colon \Delta^{n,n}\to X$. The restriction of $v\circ
\delta^2\tau'\colon \Delta^{[n,n]}\to \Fun(\Delta^l,\cD)$ to
$\CCpt^n\subseteq \Delta^{[n,n]}$ provides a vertex of $\nu(\sigma')$ of
$\cN(f(\sigma'))$, whose image under $\Phi(f(\sigma'))$ is $v(\sigma')$.
This construction is functorial in $\sigma'$, giving rise to $\nu\in
\Gamma(f^*\cN)_0$ such that $f^*\Phi\circ \nu=v$. Applying Proposition
\ref{1pr:extension} to $\Phi$, the map $f\colon \delta_2^*X\to
\delta_{2\nabla}^* X$ and the global section $\nu$ of $f^*\cN$, we obtain a
map $u\colon \delta_{2\nabla}^* X\to \Fun(\Delta^l,\cD)$ satisfying $p\circ
u=w$ such that $u\circ f$ and $v$ are homotopic over $\Fun(\partial
\Delta^l,\cD)$, as desired.
\end{proof}

\section{Cartesian gluing}\label{5ss}
In Section \ref{4ss}, we gave a general criterion for multisimplicial
descent (Theorem \ref{4th:multisimplicial_descent}). It is often impossible
to apply the theorem directly to Cartesian multisimplicial nerves, as the
simplicial set of compactifications for Cartesian tilings is often empty for
$n\ge 2$. However, we have seen that certain bigger multisimplicial nerves
do satisfy multisimplicial descent (Theorems
\ref{4co:multisimplicial_descent} and \ref{4pr:descent}). In this section,
we complete the picture by comparing Cartesian multisimplicial nerves with
bigger multisimplicial nerves. The basic idea is to decompose a square
$\sigma$ in an $\infty$-category
\begin{equation}\label{5eq:square}
\xymatrix{w \ar[r]\ar[d]& y\ar[d]\\ z\ar[r]&x}
\end{equation}
into a diagram $\sigma'$
\begin{equation}\label{5eq:decompose}
\xymatrix{w\ar[rd]\\&w' \ar[r]\ar[d]&y\ar[d]\\&z\ar[r]&x,}
\end{equation}
where the inner square is Cartesian. More precisely, $\sigma'$ is a right
Kan extension of $\sigma$ along the full embedding $\Delta^1\times
\Delta^1\to (\Delta^1\times \Delta^1)^\triangleleft$ carrying $(0,0)$ to the
cone point $-\infty$ and carrying every other vertex $(i,j)$ to $(i,j)$. To
deal with the oblique arrow $f\colon w'\to w$, we consider the square
\[\xymatrix{w\ar[r]^{\id_w}\ar[d]_{\id_w} & w\ar[d]^f\\
w\ar[r]^f & w'.}
\]
If this square is a pullback square (which happens exactly when $f$ is a
monomorphism), we stop. Otherwise, we apply the above procedure recursively,
which leads to the diagonal map $\delta\colon w\to w\times_{w'}w$ of $f$,
and the diagonal of $\delta$, and so on.

To state our result, we introduce a bit of notation. For sets of edges
$\cE_1$, $\cE_2$, $\cE$ of an $\infty$-category $\cC$, we let
$\cE_1*_\cC^\cE\cE_2\subseteq \cE_1*_\cC\cE_2$ denote the set of squares
that admit a decomposition as above with $w\to w'$ in $\cE$. We have
$\cE_1*^\cart_\cC\cE_2=\cE_1*_\cC^\cE\cE_2$, where $\cE$ is the set of
equivalences of $\cC$ (or the set of degenerate edges of $\cC$).

The main result of this section is the following.

\begin{theorem}[Cartesian gluing]\label{5th:cartesian_gluing}
Let $\cC$ be an $\infty$-category and $K$ a finite set. Let
$(\cC,\cT)\subseteq(\cC,\cT')$ be two $(\{1,2\}\coprod K)$-tiled
$\infty$-categories such that $\cT_j=\cT'_j$ for all $j\in \{1,2\}\coprod
K$, and $\cT_{jj'}=\cT'_{jj'}$ for all $j,j'\in \{1,2\}\coprod K$ with
$j\neq j'$, except when $(j,j')=(1,2)$ or $(2,1)$, we have
$\cT_{12}=\cT_1*_\cC^\cart \cT_2$ and $\cT'_{12}=\cT_1*_\cC^{\cE}\cT_2$,
where $\cE\subseteq \cT_1\cap \cT_2$ is a set of edges of $\cC$. Suppose
that the following conditions are satisfied:
\begin{enumerate}
  \item $\cT_1*_{\cC}\cT_2=\cT_1*^{\cC_1}_{\cC}\cT_2$; $\cT_1$ (resp.\
      $\cT_2$) is stable under composition and pullback by $\cT_2$
      (resp.\ $\cT_1$).

  \item Every morphism $f$ in $\cE$ is $n$-truncated for some integer
      $n\ge -2$ (which may depend on $f$) \cite{Lu1}*{Definition 5.5.6.8}.
      Moreover, $\cE$ is stable under composition, pullback by $\cT_1\cup
      \cT_2$, and taking diagonals: for every edge $y\to x$ in $\cE$, its
      diagonal $y\to y\times_x y$ is in $\cE$ (the pullback $y\times_x y$
      exists in $\cC$ by the first part of Condition (1)).

  \item For every $k\in K$, the set $\cT_{1k}$ (resp.\ $\cT_{2k}$) is
      stable under composition and pullback by $\cT_{2k}$ (resp.\
      $\cT_{1k}$) in the first direction, and $\cT_{1k}\cap \cT_{2k}$ is
      stable under pullback by $\cT_{1k}\cup \cT_{2k}$ in the first
      direction. Moreover, we have
      \begin{equation}\label{5eq:cond3}
      \cT_{1k}*_{\Fun(\Delta^1,\cC)}^{\cE*_\cC
      \cT_k}\cT_{2k}=\cT_{1k}*_{\Fun(\Delta^1,\cC)}^{
      (\cE*_\cC\cT_k)\cap\cT_{1k}\cap \cT_{2k}}\cT_{2k}.
      \end{equation}
      See Remark \ref{5re:explicit} (3) below for an explicit description
      of the meaning of \eqref{5eq:cond3}.

  \item For every pair $k,k'\in K$ with $k\neq k'$, and every
      \emph{Cartesian} square of the form \eqref{5eq:square} of the
      $\infty$-category $\Fun(\Delta^1\times \Delta^1,\cC)$ (whose
      vertices are regarded as squares of $\cC$ in directions $k,k'$),
      with $y\to x$ given by a $(1,1,1)$-simplex of
      $\delta_*^{\{1,k,k'\}\square}(\cC,\cT)$ and $z\to x$ given by a
      $(1,1,1)$-simplex of $\delta_*^{\{2,k,k'\}\square}(\cC,\cT)$ (where
      the obvious restrictions of $\cT$ are still denoted by $\cT$), we
      have $w\in\cT_{kk'}$.
\end{enumerate}
Then, for any subset $L\subseteq K$, the inclusion map
\[\iota\colon \delta^*_{\{1,2\}\amalg K,L}\delta_*^{(\{1,2\}\amalg K)\square}(\cC,\cT)
\hookrightarrow\delta^*_{\{1,2\}\amalg K,L}\delta_*^{(\{1,2\}\amalg K)\square}(\cC,\cT')
\] is a categorical equivalence.
\end{theorem}

We note that unlike the theorems in the last section, Theorem
\ref{5th:cartesian_gluing} is symmetric in $\cE_1$ and $\cE_2$.

\begin{remark}
Let us recall some facts about $n$-truncated morphisms, $n\ge -2$, in an
$\infty$-category $\cC$.
\begin{itemize}
  \item A morphism $f$ of $\cC$ is $(-2)$-truncated (resp.\
      $(-1)$-truncated) if and only if $f$ is an equivalence (resp.\ a
      monomorphism).

  \item The set of $n$-truncated morphisms of $\cC$ is admissible. Indeed,
      the set is stable under pullback by \cite{Lu1}*{Remark 5.5.6.12}. It
      follows from the long exact sequence of homotopy groups that the set
      is stable under composition. Moreover, given a $2$-simplex $\sigma$
      of $\cC$ of the form \eqref{3eq:2cell}, if $r=\sigma\circ d^2_1$ is
      $n$-truncated and $p=\sigma\circ d^2_0$ is $(n+1)$-truncated, then
      $q=\sigma\circ d^2_2$ is $n$-truncated.

  \item Given a morphism $f\colon y\to x$ of $\cC$ such that the fiber
      product $y\times_x y$ exists, $f$ is $(n+1)$-truncated if and only
      if its diagonal $y\to y\times_x y$ is $n$-truncated
      (\cite{Lu1}*{Lemma 5.5.6.15} assumes that $\cC$ admits finite
      limits, but the proof only uses the existence of $y\times_x y$).

  \item In an $(n+1)$-truncated category \cite{Lu1}*{Definition 2.3.4.1},
      every morphism is $n$-truncated by \cite{Lu1}*{Proposition
      2.3.4.18}.
\end{itemize}
\end{remark}

\begin{remark}\label{5re:explicit}
We have the following remarks concerning the conditions in the above
theorem.
\begin{enumerate}
  \item The conditions of the theorem imply that the sets $\cT_j$,
      $\cT_{ij}$, $\cT'_{ij}$ and $\cE$ are all stable under equivalence.
      Indeed, the second part of Condition (1) implies that $\cT_1$ and
      $\cT_2$ are stable under equivalence. The second part of Condition
      (2) implies that $\cE$ is stable under equivalence. It follows that
      $\cT_{12}$ and $\cT'_{12}$ are stable under equivalence. The first
      part of Condition (3) implies that $\cT_{1k}$ and $\cT_{2k}$ are
      stable under equivalence. It follows that $\cT_k$ is stable under
      equivalence. Finally, Condition (4) implies that $\cT_{kk'}$ is
      stable under equivalence.

  \item The first part of Condition (1) is satisfied if morphisms in
      $\cT_1$ admits pullback in $\cC$ by morphisms in $\cT_2$.

  \item The left hand side of \eqref{5eq:cond3} clearly contains the right
      hand side. Since $\cT_{1k}$ and $\cT_{2k}$ are stable under
      equivalence, the meaning of the equality is as follows. Consider a
      square of the form \eqref{5eq:square} in the $\infty$-category
      $\Fun(\Delta^1,\cC)$ (whose vertices are regarded as edges of $\cC$
      in direction $k$), such that $y\to x,w\to z\in\cT_{1k}$ and $z\to x,
      w\to y\in\cT_{2k}$. If it has a decomposition of the form
      \eqref{5eq:decompose} with $w\to w'$ in $\cE*_\cC \cT_k$, then $w\to
      w'$ is in $\cT_{1k}\cap\cT_{2k}$.

  \item Suppose that we have $\cT_{jj'}=\cT_j*^\cart_{\cC}\cT_{j'}$ for
      all $j,j'\in \{1,2\}\coprod K$ with $j\neq j'$. Then the identity
      \eqref{5eq:cond3} holds automatically, by (the dual of)
      \cite{Lu1}*{Lemma 4.4.2.1}. Moreover, the first part of Condition
      (3) implies Condition (4). To see this, consider a square $\sigma$
      as in Condition (4). Applying Lemma \ref{3le:cart} to the
      corresponding cube (whose vertices are edges of $\cC$ in direction
      $k'$, say), we get $w\in \cC_1*^\cart_\cC \cC_1$. Applying the first
      part of Condition (3) to the images of $\sigma$ under the maps
      $\Fun(\Delta^1\times \Delta^1,\cC)\to \Fun(\Delta^1,\cC)$ induced by
      $d^1_0\times \id$, $d^1_1\times \id$, we get $w\in
      \cC_1*_\cC\cT_{k'}$. Similarly, we have $w\in \cT_k*_\cC\cC_1$.

  \item Suppose that we have $\cT_{jj'}=\cT_j*^\cart_{\cC}\cT_{j'}$ for
      all $j,j'\in \{1,2\}\coprod K$ with $j\neq j'$, and moreover that
      $\cT_k$ is stable under pullback by either $\cT_1$ or $\cT_2$ for
      each $k\in K$. Then, by Remark \ref{3re:cart_square}, Conditions (1)
      and (2) imply Condition (3), which in turn implies Condition (4).
\end{enumerate}
\end{remark}

Combining Theorem \ref{5th:cartesian_gluing} with Theorems
\ref{4co:multisimplicial_descent} and \ref{4pr:descent}, we obtain the
following.

\begin{theorem}\label{5th:cartesian_descent}
Let $\cC$ be an $\infty$-category and let $K$ be a finite set. We are given
a $(\{0,1,2\}\coprod K)$-marked $\infty$-category
$(\cC,\cE_0,\cE_1,\cE_2,\{\cE_k\}_{k\in K})$ such that
\begin{enumerate}
  \item $\cE_1,\cE_2\subseteq \cE_0$; $\cE_0$ is stable under
      composition. Moreover, for every morphism $f$ in $\cE_0$, there
      exists a $2$-simplex of $\cC$ of the form
      \[\xymatrix{&y\ar[rd]^p\\z\ar[ru]^q\ar[rr]^f && x}\]
      with $p\in\cE_1$ and $q\in\cE_2$.

  \item Every morphism $f$ in $\cE_1\cap \cE_2$ is $n$-truncated for some
      integer $n\ge -2$ (which may depend on $f$).

  \item $\cE_k$ is stable under pullback by $\cE_1$ for every $k\in K$.

  \item Edges in $\cE_1$ admit pullbacks in $\cC$ by edges in $\cE_k$ for
      all $k\in K$.

  \item $\cE_1*_\cC \cE_2=\cE_1 *_\cC^{\cE_1\cap \cE_2} \cE_2$. Moreover,
      $\cE_1$ (resp.\ $\cE_2$) is stable under composition and pullback
      by $\cE_k$ and $\cE_2$ (resp.\ $\cE_1$); $\cE_1\cap \cE_2$ is
      stable under pullback by $\cE_1\cup \cE_2$.

  \item $\cC_{\cE_1}$ admits pullbacks and pullbacks are preserved by the
      functor $\cC_{\cE_1}\to \cC_{\cE_0}$.
\end{enumerate}
Then, for every subset $L\subseteq K$, the natural map
\[g\colon \delta^*_{\{1,2\}\amalg K,L}\cC_{\cE_1,\cE_2,\{\cE_k\}_{k\in K}}^\cart
\to \delta^*_{\{0\}\amalg K,L}\cC_{\cE_0,\{\cE_k\}_{k\in K}}^\cart
\]
is a categorical equivalence (see Definition \ref{3de:cartesian_nerve} for
the notation).
\end{theorem}

\begin{remark}
If $\cC$ admits pullbacks and $\cE_1$, $\cE_2$ are admissible, then
Conditions (4), (5), and (6) of Theorem \ref{5th:cartesian_descent} hold.
Moreover, in this case, Condition (1) of Theorem \ref{5th:cartesian_descent}
implies that $\cE_0$ is admissible by Remark \ref{3re:admissible}. Indeed,
$\cE_0$ is clearly stable under pullback, and given a $2$-simplex as in
Condition (1), we have a diagram
\[\xymatrix{z\ar[r]^-{d_q}\ar[rd]_-{d_f} & z\times_y z\ar[r]\ar[d]&y\ar[d]^-{d_p}\\
&z\times_x z\ar[r] & y\times_x y,}
\]
where the square is a pullback by Lemma \ref{5le:pullback} below, so that
the diagonal $d_f$ of $f$ belongs to $\cE_0$.
\end{remark}

\begin{lemma}\label{5le:pullback}
Let $\cC$ be an $\infty$-category admitting pullbacks. Consider two
$2$-simplices of $\cC$ sharing an edge as depicted by the diagram
\[\xymatrix{z\ar[r]\ar[rd]&x'\ar[d]&y\ar[l]\ar[ld]\\
&x.}
\]
Then we have a pullback square
\[\xymatrix{y\times_{x'} z\ar[d]\ar[r]& x'\ar[d]\\
y\times_{x} z\ar[r] & x'\times_x x',}
\]
where the right vertical arrow is the diagonal of $x'\to x$.
\end{lemma}

\begin{proof}
Indeed, we have a diagram
\[\xymatrix{y\times_{x'}z\ar[rr]\ar[dd]\ar@{-->}[rd]&& y\ar[dd]\ar[rd]\\
&y\times_{x} z\ar@{-->}[rr]\ar@{-->}[dd] && y\times_x x'\ar[r]\ar[dd] & y\ar[dd]\\
z\ar[rr]\ar[rd] && x'\ar[rd]\\
&x'\times_x z\ar[rr]\ar[d] && x'\times_x x'\ar[r]\ar[d] & x'\ar[d]\\
&z\ar[rr] && x'\ar[r] & x}
\]
where the front face of the cube and the squares on the back page are
pullbacks. It follows that the other two faces of the cube containing $x'$
are pullbacks. Therefore, all the faces of the cube are pullbacks.
\end{proof}

\begin{proof}[Proof of Theorem \ref{5th:cartesian_descent}]
Denote by $(\cC,\cT)$ the $(\{1,2\}\coprod K)$-tiled simplicial set as in
Theorem \ref{4co:multisimplicial_descent}. Then the map $g$ factorizes as
\[\delta^*_{\{1,2\}\amalg K,L}\cC_{\cE_1,\cE_2,\{\cE_k\}_{k\in K}}^\cart
\xrightarrow{\iota}\delta^*_{\{1,2\}\amalg K,L}\delta_*^{(\{1,2\}\amalg
K)\square}(\cC,\cT) \xrightarrow{f}\delta^*_{\{0\}\amalg
K,L}\cC_{\cE_0,\{\cE_k\}_{k\in K}}^\cart.
\]
By Theorem \ref{5th:cartesian_gluing} applied to the inclusion
$(\cC,(\cE_1,\cE_2,\{\cE_k\}_{k\in K})^\cart)\subseteq(\cC,\cT)$ (see
Definition \ref{3de:cartesian_nerve} for the notation) and $\cE=\cE_1\cap
\cE_2$, the inclusion $\iota$ is a categorical equivalence. Indeed, by
Condition (3) of Theorem \ref{5th:cartesian_descent} and Remark
\ref{5re:explicit} (5), it suffices to check Conditions (1) and (2) of
Theorem \ref{5th:cartesian_gluing}. The first part of Condition (2) of
Theorem \ref{5th:cartesian_gluing} is Condition (2) of Theorem
\ref{5th:cartesian_descent}. Condition (1) and the second part of Condition
(2) of Theorem \ref{5th:cartesian_gluing} follow from Condition (5) of
Theorem \ref{5th:cartesian_descent}. To show that $f$ is a categorical
equivalence as well, we use Theorem \ref{4co:multisimplicial_descent} (with
$\alpha=1$). Conditions (1) and (2) of Theorem
\ref{4co:multisimplicial_descent} follow from Condition (1) of Theorem
\ref{5th:cartesian_descent}. Condition (3) of Theorem
\ref{4co:multisimplicial_descent} follows from Condition (5) of Theorem
\ref{5th:cartesian_descent}. Conditions (4) and (5) of Theorem
\ref{4co:multisimplicial_descent} are Conditions (3) and (4) of Theorem
\ref{5th:cartesian_descent}, respectively. It remains to check that
$\Komp^1_{\cC,\cE_1,\cE_2}(\tau)$ is weakly contractible for every simplex
$\tau$ of $\cC_{\cE_0}$, which follows from Theorem \ref{4pr:descent}
applied to $(\cC_{\cE_0},\cE_1,\cE_2)$. Conditions (1), (2), (3) of Theorem
\ref{4pr:descent} follow from Conditions (5), (6), (1) of Theorem
\ref{5th:cartesian_descent}, respectively.
\end{proof}

The rest of this section is devoted to the proof of Theorem
\ref{5th:cartesian_gluing}. A key ingredient in the proof is an analogue of
the diagram \eqref{5eq:decompose} for decompositions of simplices of higher
dimensions. Such decompositions are naturally encoded by certain lattices.
Let us review some basic terminology.

\begin{definition}[Lattice]
By a \emph{lattice} we mean a nonempty partially ordered set admitting
products (namely, infima) and coproducts (namely, suprema) of pairs of
elements, or equivalently, admitting finite nonempty products and
coproducts. In a lattice, we denote products by $\wedge$ and coproducts by
$\vee$. A lattice $P$ is said to be \emph{distributive} if $p\wedge (q\vee
r)=(p\wedge q)\vee (p\wedge r)$ for all $p,q,r\in P$, or equivalently,
$p\vee (q\wedge r)=(p\vee q)\wedge (p\vee r)$  for all $p,q,r\in P$
\cite{DP}*{Lemma 4.3}.

A map between lattices preserving finite nonempty products and coproducts is
called a \emph{morphism} of lattices. A morphism of lattices necessarily
preserves order.
\end{definition}

Note that a finite lattice admits arbitrary products and coproducts.

\begin{definition}[Sublattice]\label{5de:interval}
A nonempty subset of a lattice is called a \emph{sublattice} if it is stable
under finite nonempty products and coproducts. We endow the subset with the
induced lattice structure.
\end{definition}

Subsets of a lattice $P$ of the forms $P_{p/}$, $P_{/q}$, $P_{p//q}$ for
$p\le q$ in $P$ are necessarily sublattices of $P$.

\begin{definition}[Up-set lattice]
Let $P$ be a partially ordered set. A subset $Q$ of $P$ is called an
\emph{up-set} if $q\in Q$ and $p\ge q$ with $p\in P$ imply $p\in Q$. We
order the set $\cU(P)$ of up-sets of $P$ by \emph{inverse inclusion}: $Q\le
Q'$ if and only if $Q\supseteq Q'$. Then $\cU(P)$ becomes a distributive
lattice admitting arbitrary products and coproducts. In fact, we have $Q\vee
Q'=Q\cap Q'$ and $Q\wedge Q'=Q\cup Q'$. We call $\cU(P)$ the \emph{up-set
lattice} of $P$.

We let $\varsigma^P\colon P\to \cU(P)$ denote the map carrying $p$ to
$P_{p/}$, which is a fully faithful functor (namely, an order embedding)
since we have chosen the inverse inclusion order on $\cU(P)$. Note that
$\varsigma^P$ preserves coproducts whenever they exist in $P$. On the other
hand, $\varsigma^P$ does not preserve the product of any family of elements,
unless the family admits a minimum.
\end{definition}

\begin{remark}
Although we do not need it in the sequel, let us recall the correspondence
between finite partially ordered sets and finite distributive lattices
\cite{DP}*{Chapter 5} via up-set lattices. An element $p$ of a lattice $L$
is said to be \emph{product-irreducible} if $p$ is not a final object
(namely, maximum) of $L$ and $p=a\wedge b$ implies $p=a$ or $p=b$ for all
$a,b\in L$. We let $\cI(L)\subseteq L$ denote the subset of
product-irreducible elements of $L$. The map $\varsigma^P$ factorizes to
give an embedding $P\to \cI(\cU(P))$, which is an isomorphism if $P$ is
finite. The map $\eta_L\colon L\to \cU(\cI(L))$ carrying $x$ to
$\cI(L)_{x/}$ is a morphism of lattices preserving initial and final
objects. Birkhoff's representation theorem states that $\eta_L$ is an
isomorphism for any finite distributive lattice $L$.
\end{remark}

We will need the following properties of up-set lattices.

\begin{remark}\label{5re:U1}
We have an isomorphism $\cU(P^\triangleright)\simeq \cU(P)^\triangleright$
carrying $Q\neq \emptyset$ to $Q\cap P$ and carrying $\emptyset$ to the cone
point of $\cU(P)^\triangleright$. In particular, $\cU(P)$ can be identified
with the sublattice of $\cU(P^\triangleright)$ spanned by nonempty up-sets
of $P^\triangleright$, or equivalently, up-sets of $P^\triangleright$ that
contain the cone point.
\end{remark}

\begin{remark}\label{5re:U2}
For $Q\in \cU(P)$, we have $Q\subseteq P$ and $\varsigma^P(Q)\subseteq
\varsigma^P(P)\subseteq \cU(P)$. Moreover, we have
$\varsigma^P(P)_{Q/}=\varsigma^P(Q)$. Thus a diagram $F\colon \N(\cU(P))\to
\cC$ in an $\infty$-category $\cC$ is a right Kan extension along
$\N(\varsigma^P)$ if and only if for every $Q\in \cU(P)$, the restriction of
$F$ to $\N(\varsigma^P(Q))^\triangleleft$ exhibits $F(Q)$ as the limit of
$F\res \N(\varsigma^P(Q))$. Note that when $Q\in\varsigma^P(P)$, the last
condition is automatic. To alleviate notation, we will write $\varsigma^P$
for $\N(\varsigma^P)$.
\end{remark}

\begin{definition}
Let $P$ and $P'$ be partially ordered sets and let $f\colon P'\to P$ be an
order-preserving map. The map $\cU^f\colon \cU(P)\to \cU(P')$ carrying $Q$
to $f^{-1}(Q)$ is a morphism of lattices preserving products and coproducts.
The functor $\cU^f$ admits a right adjoint $\cU_f\colon \cU(P')\to \cU(P)$
carrying an up-set $Q'$ of $P'$ to the up-set of $P$ generated by $f(Q')$.
In other words, $\cU_f(Q')=\bigcup_{q\in Q'} P_{f(q)/}$. The functor $\cU_f$
preserves products.
\end{definition}

We will need the following properties of the functor $\cU_f$.

\begin{remark}\label{5re:U3}
The following diagram commutes:
\[\xymatrix{P'\ar[r]^-{\varsigma^{P'}}\ar[d]_f &
\cU(P')\ar[d]^{\cU_f}\\ P\ar[r]^-{\varsigma^P}& \cU(P).}\]
\end{remark}

\begin{remark}\label{5re:U5}
Suppose $P'$ admits nonempty coproducts and $f$ preserves such coproducts.
For $Q'\in \cU(P')$, the map $f$ restricts to a map $Q'\to \cU_f(Q')$. We
claim that the induced map $\N(Q')^{op}\to \N(\cU_f(Q'))^{op}$ is cofinal.
Indeed, for every $Q\in \cU_f(Q')$, the partially ordered set
$Q'\times_{\cU_f(Q')}\cU_f(Q')_{/Q}$ is nonempty and admits nonempty
coproducts, hence admits a final object. Thus
$\N(Q')\times_{\N(\cU_f(Q'))}\N(\cU_f(Q'))_{/Q}$ is weakly contractible and
we apply the criterion of cofinality \cite{Lu1}*{Theorem 4.1.3.1}.

In this case, if $F\colon \N(\cU(P))\to \cC$ is a right Kan extension along
$\varsigma^P$, then $F\circ \N(\cU_f)\colon \N(\cU(P'))\to \cC$ is a right
Kan extension along $\varsigma^{P'}$. Indeed, by Remark \ref{5re:U2}, it
suffices to check that for every $Q'\in \cU(P')$ and every limit diagram
$\N(\cU_f(Q'))^\triangleleft\to \cC$, the induced map
$\N(Q')^\triangleleft\to \cC$ is a limit diagram, which follows from the
above cofinality by \cite{Lu1}*{Proposition 4.1.1.8}.
\end{remark}

\begin{lemma}\label{5le:U4}
If $P'$ admits coproducts indexed by a set $I$ and $f\colon P'\to P$
preserves such coproducts, then $\cU_f$ preserves coproducts indexed by $I$.
In particular, if $P$ admits coproducts of pairs of elements and $f$
preserves such coproducts, then $\cU_f$ is a morphism of lattices.
\end{lemma}

\begin{proof}
Let $Q'_i$, $i\in I$ be up-sets of $P'$. We have $\bigcap_{i\in
I}\cU_f(Q'_i)\supseteq \cU_f(\bigcap_{i\in I}Q'_i)$. To show the inclusion
in the other direction, let $y\in \bigcap_{i\in I}\cU_f(Q'_i)$. For each
$i\in I$, there exists $x_i\in Q'_i$ such that $f(x_i)\le y$. Thus
$f(\bigvee_{i\in I} x_i)= \bigvee_{i\in I}f(x_i)\le y$. This implies
$y\in\cU_f(\bigcap_{i\in I} Q'_i)$ since we have $\bigvee_{i\in I} x_i\in
\bigcap_{i\in I}Q'_i$.
\end{proof}

\begin{definition}[Exact square]\label{5de:exact}
By an \emph{exact square} in a lattice, we mean a square that is both a
pushout square and a pullback square, or, equivalently, a square of the form
\[\xymatrix{x\wedge y\ar[r]\ar[d] & x\ar[d]\\y \ar[r] & x\vee y.}\]
The left vertical arrow is called an \emph{exact pullback} of the right
vertical arrow.
\end{definition}

Exact squares in $\cU(P)$ correspond to pushout squares of sets. The
relevance of such squares is shown by the following lemmas.

\begin{lemma}\label{5le:U7}
Every right Kan extension $F\colon \N(\cU(P))\to \cC$ along $\varsigma^P$
carries exact squares to pullback squares. More generally, for every full
subcategory $R\subseteq \cU(P)$ containing $\varsigma^P(P)$, every functor
$F\colon \N(R)\to \cC$ that is a right Kan extension of $F\res
\N(\varsigma^P(P))$ carries exact squares to pullback squares.
\end{lemma}

\begin{proof}
Let
\begin{equation}\label{5eq:exact}
\xymatrix{Q\cup Q'\ar[r]\ar[d] & Q\ar[d]\\Q'\ar[r] & Q\cap Q'}
\end{equation}
be an exact square in $R$. We consider $S=\varsigma^P(P)\cup \{Q,Q',Q\cap
Q'\}$, satisfying $\varsigma^P(P)\subseteq S\subseteq R$. By
\cite{Lu1}*{Proposition 4.3.2.8}, $F$ is a right Kan extension of $F\res
\N(S)$. In particular, the restriction of $F$ exhibits $F(Q\cup Q')$ as a
limit of $F\res \N(S_{Q\cup Q'/})$. By Lemma \ref{3le:cofinal}, the map
$\Lambda^{2}_0\to \N(S_{Q\cup Q'/})^{op}$ induced by the square
\eqref{5eq:exact} is cofinal. Thus by \cite{Lu1}*{Proposition 4.1.1.8}, $F$
carries the square to a pullback square in $\cC$.
\end{proof}

\begin{lemma}\label{5le:exact}
Let $P$ be a finite partially ordered set. Every morphism $Q\to Q'$ in
$\cU(P)$ is the composition of a finite sequence of exact pullbacks of the
morphisms $\omega^P(x)\colon \varsigma^P(x)\to \varsigma^P(x)-\{x\}$ for
$x\in Q-Q'$.
\end{lemma}

\begin{proof}
We may choose a (finite) sequence of morphisms $Q=Q_0\to\cdots\to Q_m=Q'$
such that for $1\leq i\leq m$, $Q_{i-1}=Q_i\cup\{x_i\}$, where $x_i\in
Q-Q_i$ is a maximal element. For each $i$, the following diagram
\[\xymatrix{
Q_{i-1} \ar[r]\ar[d] & \varsigma^P(x_i) \ar[d]^-{\omega^P(x_i)} \\
Q_i \ar[r]& \varsigma^P(x_i)-\{x_i\}
}
\]
is an exact square. Thus the lemma follows.
\end{proof}

The following lattices encode generalizations of the diagram
\eqref{5eq:decompose}.

\begin{notation}\label{5no:cart}
For $n\ge 0$, we let $\Crt^n$ denote the sublattice of $\cU([n]\times [n])$
spanned by nonempty up-sets of $[n]\times [n]$ and we let $\varsigma^n\colon
[n]\times [n]\to \Crt^n$ denote the map induced by $\varsigma^{[n]\times
[n]}$ carrying $(p,q)$ to $([n]\times [n])_{(p,q)/}$. For an
order-preserving map $d\colon [m]\to [n]$, we let $\Crt(d)\colon \Crt^m\to
\Crt^n$ denote the map induced by $\cU_{d\times d}$. Put
$\Cart^n=\N(\Crt^n)$ and $\Cart(d)=\N(\Crt(d))$. We still write
$\varsigma^n$ for $\N(\varsigma^n)$.
\end{notation}

By Remark \ref{5re:U1}, we have $\Crt^n\simeq \cU([n]\times [n]-\{(n,n)\})$.
The definition of $\Crt^n$ given above has the advantage of being functorial
with respect to $[n]$. Every up-set of $[n]\times [n]$ has the form
$\{(p,q)\in [n]\times [n]\mid q\ge a_p\}$ for a sequence of integers $-1\le
a_0\le\dots\le a_n\le n$. Thus the cardinality of $\Crt^n$ is
$\binom{2n+2}{n+1}-1$.

Below are the Hasse diagrams of $\Crt^1$ and $\Crt^2$, rotated so that the
initial objects are shown in the upper-left corners. Bullets represent
elements in the images of $\varsigma^1$ and $\varsigma^2$. The dashed boxes
represent $\Crt^1_{0,1}$ and $\Crt^2_{1,2}$ (see Construction
\ref{5cs:boxplus} (1) below).
\begin{equation}\label{5eq:Hasse}
\begin{xy}
(0,5)="0000";
(5,0)*\cir<1.8pt>{}="00"**\dir{-}; (10,0)="01"**\dir{-};
(10,-5)="11"**\dir{-};
(5,-5)="10"**\dir{-};
"00"**\dir{-};
"0000"*{\bullet}; "01"*{\bullet}; "11"*{\bullet}; "10"*{\bullet};
(-2,-2)="b"; (-2,7)**\dir{--}; (12,7)**\dir{--}; (12,-2)**\dir{--}; "b"**\dir{--};
\end{xy}\qquad\qquad
\begin{xy}
(0,5)="000000"; (5,0)*\cir<1.8pt>{}="0000"**\dir{-};
(10,0)*\cir<1.8pt>{}="0001"**\dir{-}; (15,0)="0101"**\dir{-};
(5,-5)*\cir<1.8pt>{}="0010"; (10,-5)*\cir<1.8pt>{}="0011"**\dir{-};
(15,-5)*\cir<1.8pt>{}="0111"**\dir{-};
(5,-10)="1010";
(10,-10)*\cir<1.8pt>{}="1011"**\dir{-};
(15,-10)="1111"**\dir{-};
(12.5,-7.5)*\cir<1.8pt>{}="00";
(17.5,-7.5)*\cir<1.8pt>{}="01"**\dir{-};
(22.5,-7.5)="02"**\dir{-};
(12.5,-12.5)*\cir<1.8pt>{}="10";
(17.5,-12.5)*\cir<1.8pt>{}="11"**\dir{-};
(22.5,-12.5)="12"**\dir{-};
(12.5,-17.5)="20";
(17.5,-17.5)="21"**\dir{-};
(22.5,-17.5)="22"**\dir{-};
"00"; "10"**\dir{-}; "20"**\dir{-};
"01"; "11"**\dir{-}; "21"**\dir{-};
"02"; "12"**\dir{-}; "22"**\dir{-};
"0000"; "0010"**\dir{-}; "1010"**\dir{-};
"0001"; "0011"**\dir{-}; "1011"**\dir{-}; "10"**\dir{-};
"0101"; "0111"**\dir{-}; "1111"**\dir{-}; "11"**\dir{-};
"0011"; "00"**\dir{-};
"0111"; "01"**\dir{-};
"000000"*{\bullet}; "0101"*{\bullet}; "1010"*{\bullet}; "1111"*{\bullet};
"02"*{\bullet}; "12"*{\bullet}; "22"*{\bullet}; "21"*{\bullet}; "20"*{\bullet};
(3,-3)="b"; (3,-14.5)**\dir{--}; (24.5,-14.5)**\dir{--}; (24.5,-3)**\dir{--}; "b"**\dir{--};
\end{xy}
\end{equation}

The map $\Crt(d)$ is a morphism of lattices by Lemma \ref{5le:U4}. Moreover,
$\varsigma^n$ preserves coproducts and final objects. In particular,
$\varsigma^n(p,q)=\varsigma^n(p,0)\vee \varsigma^n(0,q)$. By Remark
\ref{5re:U3}, the maps $\varsigma^n$ for different $n$ are compatible with
$d$ in the sense that we have
$\Crt(d)(\varsigma^m(p,q))=\varsigma^n(d(p),d(q))$ for all $(p,q)\in
[m]\times [m]$.

By Remark \ref{5re:U2}, a diagram $F\colon \Cart^n\to \cC$ in an
$\infty$-category $\cC$ is a right Kan extension along $\varsigma^n$ if and
only if for every $Q\in \Crt^n$, the restriction of $F$ to
$\N(\varsigma^n(Q))^\triangleleft$ exhibits $F(Q)$ as the limit of $F\res
\N(\varsigma^n(Q))$. By Remark \ref{5re:U5}, if $F\colon \Cart^n\to \cC$ is
a right Kan extension along $\varsigma^n$, then $F\circ \Cart(d)\colon
\Cart^m\to \cC$ is a right Kan extension along $\varsigma^m$.

\begin{definition}\label{5de:cartesian}
Let $\cC$, $\cD$ be $\infty$-categories and let $\tau\colon \Delta^n\times
\Delta^n\times \cD\to \cC$ be a functor.  We define $\Kart(\tau)$, the
simplicial set of \emph{Cartesianizations of $\tau$}, to be the fiber of the
restriction map
\begin{align*}
\Fun(\Cart^n\times\cD,\cC)_\RKE \xrightarrow{\RES}
\Fun(\Delta^n\times\Delta^n\times\cD,\cC)
\end{align*}
at $\tau$. Here,
$\Fun(\Cart^n\times\cD,\cC)_\RKE\subseteq\Fun(\Cart^n\times\cD,\cC)$ is the
full subcategory spanned by functors $F\colon \Cart^n\times\cD\to\cC$ that
are right Kan extensions of $F\res\Delta^n\times\Delta^n\times\cD$ along
$\varsigma^n\times \id_\cD$.
\end{definition}

\begin{remark}
By \cite{Lu1}*{Proposition 4.3.2.9}, $\RES$ is the composition
\[\Fun(\Cart^n\times\cD,\cC)_\RKE\to \cK \hookrightarrow \Fun(\Delta^n\times\Delta^n\times\cD,\cC)\]
of a trivial Kan fibration with the inclusion of the full subcategory $\cK$
spanned by functors $\tau$ that admit right Kan extensions along
$\varsigma^n\times \id_\cD$. In particular, $\Kart(\tau)$ is a contractible
Kan complex if $\tau$ admits a right Kan extension along $\varsigma^n\times
\id_\cD$ and $\Kart(\tau)$ is empty otherwise.

If $\cC$ admits pullbacks, then $\RES$ is a trivial Kan fibration. Indeed,
in this case, every diagram $\N(Q)\to \cC$, where $Q\in \Crt^n$, admits a
limit by Lemma \ref{4le:over_pull}.
\end{remark}

The following projection map will play an important role.

\begin{notation}\label{5no:pi}
Let $n\ge 0$ be an integer. We define a morphism of lattices
\[\pi^n=(\pi^n_1,\pi^n_2)\colon \Crt^n\to [n]\times[n]\]
to be the composite of the morphism of lattices $-\vee \xi^n(n,n)\colon
\Crt^n\to \Crt^n_{n,n}$, where
$\xi^n(n,n)=\varsigma^n(n,0)\wedge\varsigma^n(0,n)$ and
$\Crt^n_{n,n}=\Crt^n_{\xi^n(n,n)/}$, and the isomorphism $\Crt^n_{n,n}\simeq
[n]\times[n]$ carrying $\xi^n(n,n)_{(p,q)/}=\varsigma^n(p,n)\wedge
\varsigma^n(n,q)$ to $(p,q)$. We still write $\pi^n$ for $\N(\pi^n)$.
\end{notation}

Note that $\varsigma^n$ is a left adjoint of $\pi^n$, hence a section of
$\pi^n$. We have the following characterizations of $\pi^n$: for
$Q\in\Crt^n$, we have
\begin{gather*}
\varsigma^n(\pi^n_1(Q),n)=Q\vee \varsigma^n(0,n),\quad \varsigma^n(n,\pi^n_2(Q))=Q\vee\varsigma^n(n,0),\\
\pi^n(Q)=\left(\min_{(p,q)\in Q}p,\min_{(p,q)\in Q} q\right).
\end{gather*}
The last equation implies that for every order-preserving map $d\colon
[m]\to [n]$, we have $\pi^n\circ \Crt(d)=(d\times d)\circ \pi^m$. Indeed,
for $Q\in \Crt^m$, we have
\[\pi^n(\Crt(d)(Q))=\left(\min_{(p,q)\in Q} d(p),\min_{(p,q)\in Q} d(q)\right)=(d\times d)(\pi^n(Q)).\]

\begin{lemma}\label{5le:kan}
Let $\cC$ be an $\infty$-category and $F\colon \Delta^n\times \Delta^n\to
\cC$ a diagram. The following conditions are equivalent:
\begin{enumerate}
\item $F$ is obtained from a map of bisimplicial sets $\Delta^{n,n}\to
    \cC^\cart_{\cC_1,\cC_1}$.

\item $F$ is a right Kan extension of $F\res \N(\xi^n(n,n))$.

\item $F\circ \pi^n\colon \Cart^n\to \cC$ is a right Kan extension along
    $\varsigma^n$.
\end{enumerate}
\end{lemma}

\begin{proof}
By Lemma \ref{3le:cofinal}, the map $\Lambda^2_0\to
\N(\xi^n(n,n)_{(p,q)/})^{op}$ induced by the square
\[
\xymatrix{(p,q)\ar[r]\ar[d] & (p,n)\ar[d]\\(n,q)\ar[r] & (n,n)}
\]
is cofinal. Thus, by \cite{Lu1}*{Proposition 4.1.1.8}, (2) is equivalent to
the condition that $F$ carries the above square to a pullback. This
condition is a special case of (1), and is equivalent to (1) by
\cite{Lu1}*{Lemma 4.4.2.1}.

Next we show that (2) implies (3). Assume that $F\circ \pi^n\colon
\Cart^n\to \cC$ is a right Kan extension along $\varsigma^n$. Then
$F(p,q)=F(\pi^n(\varsigma^n(p,n)\wedge \varsigma^n(n,q)))$ is a limit of
$F\res \N(\varsigma^n(p,n)\wedge \varsigma^n(n,q))$ by Remark \ref{5re:U2}.
This implies that $F$ is a right Kan extension of $F\res \N(\xi^n(n,n))$.

Finally we show that (3) implies (2). Assume that $F$ is a right Kan
extension of $F\res \N(\xi^n(n,n))$. Then, for every $Q\in \Crt^n$, the
restriction $F\res \N(Q)$ is a right Kan extension of $F\res \N(Q\vee
\xi^n(n,n))$. Indeed, for any $(p,q)\in Q$, we have $(Q\vee
\xi^n(n,n))_{(p,q)/}=\xi^n(n,n)_{(p,q)/}$. Moreover, the restriction of $F$
exhibits $F(\pi^n(Q))$ as the limit of $F\res Q\vee \xi^n(n,n)$ since
$\xi^n(n,n)_{\pi^n(Q)/}=Q\vee \xi^n(n,n)$. It follows that the restriction
of $F\circ \pi^n$ exhibits $(F\circ \pi^n)(Q)$ as a limit of $F\circ
\pi^n\res \N(\varsigma^n(Q))$. Therefore, $F\circ \pi^n\colon \Cart^n\to
\cC$ is a right Kan extension along $\varsigma^n$ by Remark \ref{5re:U2}.
\end{proof}

We now introduce a crucial $2$-marking on $\Cart^n$.

\begin{notation}
Let $n\geq 0$ be an integer. We define a $2$-marking $\cF=(\cF_1,\cF_2)$ on
$\Cart^n$ as follows. For $i=1,2$, we let $\bar\cF_i$ denote the set of edges
of $\epsilon^2_i \Delta^{n,n}$, so that $\delta^*_{2+}\Delta^{n,n}\simeq
(\Delta^n\times\Delta^n,\bar\cF_1,\bar \cF_2)$. We define
$\cF_i=(\pi^n)^{-1}(\bar\cF_i)$ for $i=1,2$. Graphically, $\cF_1$ (resp.\
$\cF_2$) consists of edges whose image under $\pi^n$ are vertical (resp.\
horizontal). Recall that $\cF$ induces a $2$-tiling $\cF^\cart$ defined by
$\cF^\cart_{12} =\cF_1*_{\Cart^n}^\cart \cF_2$.
\end{notation}

For an order-preserving map $d\colon [m]\to [n]$, the map $\Cart(d)$ induces
a map $(\Cart^m,\cF)\to(\Cart^n,\cF)$ of $2$-marked $\infty$-categories, and
a map $(\Cart^m,\cF^\cart)\to(\Cart^n,\cF^\cart)$ of $2$-tiled
$\infty$-categories.

\begin{construction}\label{5cs:YZ}
Consider a $(\{1,2\}\coprod K)$-tiled $\infty$-category $(\cC,\cT)$ and a
subset $L\subseteq K$. For brevity, we write $I$ for $\{1,2\}\coprod K$. We
consider the following two simplicial sets
\begin{align*}
Y^n(\cT)&=\epsilon^I_1\Map(\delta_*^{2+}(\Cart^n,\cF)\boxtimes
\Delta^{n_k\res k\in K}_L,\delta_*^{I\square} (\cC,\cT)),\\
Z^n(\cT)&=\epsilon^I_1\Map(\delta_*^{2\square}(\Cart^n,\cF^\cart)\boxtimes
\Delta^{n_k\res k\in K}_L,\delta_*^{I\square} (\cC,\cT)).
\end{align*}
We have a natural commutative diagram
\[\xymatrix{
& \Fun(\delta^*_2\delta_*^2\Cart^n\times\Delta^{[n_k]_{k\in K}}_L,\cC) \ar[d]
& \Fun(\Cart^n\times\Delta^{[n_k]_{k\in K}}_L,\cC)\ar[l]_-{f^n}\ar@/^1pc/[ld]_-{g^n}\ar@/^2pc/[ldd]_-{h^n} \\
Y^n(\cT) \ar@{^(->}[r]\ar[d] & \Fun(\delta^*_2\delta_*^{2+}(\Cart^n,\cF)\times\Delta^{[n_k]_{k\in K}}_L,\cC) \ar[d] \\
Z^n(\cT) \ar@{^(->}[r] & \Fun(\delta^*_2\delta_*^{2\square}(\Cart^n,\cF^\cart)\times\Delta^{[n_k]_{k\in K}}_L,\cC), }
\]
where
\begin{itemize}
  \item The vertical arrows are induced by the inclusions
      $\delta_*^{2\square}(\Cart^n,\cF^\cart)\subseteq
      \delta_*^{2+}(\Cart^n,\cF)\subseteq\delta_*^2\Cart^n$;

  \item $f^n$ is induced by the adjunction
      $\delta^*_2\delta_*^2\Cart^n\to \Cart^n$;

  \item $g^n$ and $h^n$ are compositions of $f^n$ and the vertical
      arrows;

  \item In the inclusion on the second row we have used the isomorphism
      \[\epsilon^I_1\Map(\delta_*^{2+}(\Cart^n,\cF)\boxtimes
      \Delta^{n_k\res k\in K}_L ,\delta_*^I\cC)\simeq
      \Fun(\delta^*_2\delta_*^{2+}(\Cart^n,\cF)\times \Delta^{[n_k]_{k\in
      K}}_L,\cC)\]
      in Remark \ref{3re:adjunction} and similarly for the inclusion on the third row.
\end{itemize}
Moreover, we have a commutative diagram
\[\xymatrix{
Y^n(\cT) \ar[r]^-{y^n(\cT)}\ar[d] & \Map(\delta^*_2\delta_*^{2+}(\Cart^n,\cF)\times\Delta^{[n_k]_{k\in K}},
\delta^*_{I,L}\delta_*^{I\square}(\cC,\cT)) \ar[d] \\
Z^n(\cT) \ar[r]^-{z^n(\cT)} & \Map(\delta^*_2\delta_*^{2\square}(\Cart^n,\cF^\cart)\times\Delta^{[n_k]_{k\in K}},
\delta^*_{I,L}\delta_*^{I\square} (\cC,\cT)), }
\]
where the vertical arrows are induced by the inclusion
$\delta_*^{2\square}(\Cart^n,\cF^\cart)\subseteq\delta_*^{2+}(\Cart^n,\cF)$
and the horizontal arrows are induced by $\delta^*_I$ in Remark
\ref{3re:adjunction}. In the above notation, we have kept the datum $\cT$ as
we will now let it vary.
\end{construction}

\begin{lemma}\label{5le:restriction}
Assume that we are in the situation of Theorem \ref{5th:cartesian_gluing}.
Let $\cE^i\subseteq \cE$ be the subset of $i$-truncated edges, and let
$\cT^i$ be the $(\{1,2\}\coprod K)$-tiling between $\cT$ and $\cT'$
determined by $\cT^i_{12}=\cT_1*_{\cC}^{\cE^i}\cT_2$. Then, for every map
$\tau\colon\Delta^{n,n,n_k\res k\in K}_L\to \delta_*^{(\{1,2\}\amalg
K)\square}(\cC,\cT^i)$, the simplicial set $\Kart(\tau)$ is a contractible
Kan complex and the restriction of the map $g^n$ (resp.\ $h^n$) to
$\Kart(\tau)\subseteq\Fun(\Cart^n\times\Delta^{[n_k]_{k\in K}}_L,\cC)$ has
image contained in $Y^n(\cT^i)$ (resp.\ $Z^n(\cT^{i-1})$ for $i\ge -1$).
Here $\Kart(\tau)$ is the simplicial set of Cartesianizations of $\tau$
(where $\tau$ is regarded as a functor
$\Delta^n\times\Delta^n\times\Delta^{[n_k]_{k\in K}}_L\to\cC$) in Definition
\ref{5de:cartesian}.
\end{lemma}

In particular, we have induced maps
\begin{align*}
g(\tau)\colonequals y^n(\cT^i)\circ g^n&\colon\Kart(\tau)\to\Map(\delta^*_2\delta_*^{2+}(\Cart^n,\cF)\times\Delta^{[n_k]_{k\in K}},
\delta^*_{\{1,2\}\amalg K,L}\delta_*^{(\{1,2\}\amalg K)\square} (\cC,\cT^i)),\\
h(\tau)\colonequals z^n(\cT^{i-1})\circ h^n&\colon\Kart(\tau)\to\Map(\delta^*_2\delta_*^{2\square}(\Cart^n,\cF^\cart)\times\Delta^{[n_k]_{k\in K}},
\delta^*_{\{1,2\}\amalg K,L}\delta_*^{(\{1,2\}\amalg K)\square} (\cC,\cT^{i-1})).
\end{align*}

\begin{proof}
Consider an equivalence $e$ in the $\infty$-category
\begin{equation}\label{5eq:restr}
\Fun(\delta^*_2\delta_*^{2+}(\Cart^n,\cF)\times
\Delta^{[n_k]_{k\in K}}_L,\cC)
\end{equation}
with one vertex in $Y^n(\cT^i)$. By Remark \ref{5re:explicit} (1), we know
that the other vertex is also in $Y^n(\cT^i)$. Moreover, we have
$\cE^{-2}*^\cart_\cC\cT_\alpha\subseteq \cT^i_{1\alpha}$ for $\alpha\in
\{2\}\coprod K$. It follows that $e$ is in $Y^n(\cT^i)$. Thus, for any
connected Kan complex $S$ contained in \eqref{5eq:restr}, either
$Y^n(\cT^i)\cap S=\emptyset$ or $S\subseteq Y^n(\cT^i)$. The same holds for
$Z^n(\cT^{i-1})$. As $\Kart(\tau)$ is either empty or a contractible Kan
complex, its images in $\infty$-categories are contained in connected Kan
complexes. Therefore, it suffices to find one vertex $F$ of $\Kart(\tau)$
satisfying $g^n(F)\in Y^n(\cT^i)$ and $h^n(F)\in Z^n(\cT^{i-1})$.

For clarity, let $G\colon\Delta^{[n,n,n_k]_{k\in K}}\to\cC$ be the functor
corresponding to $\tau$ (we have till now denoted $G$ by $\tau$). Note that
$G$ underlies a map of $(\{1,2\}\coprod K)$-tiled simplicial sets
$\delta^*_{I\square}\Delta^{n,n,n_k\res k\in K}_L\to (\cC,\cT^i)$. We let
$\bar \cG_\alpha$ denote the set of edges of $\epsilon^I_\alpha
\Delta^{n,n,n_k\res k\in K}_L$. Then
\[\delta^*_{I\square}\Delta^{n,n,n_k\res k\in K}_L\simeq\sfW\delta^*_{I+}\Delta^{n,n,n_k\res k\in K}_L,\quad
\delta^*_{I+}\Delta^{n,n,n_k\res k\in K}_L\simeq (\Delta^{[n,n,n_k]_{k\in
K}}_L,\{\bar\cG_\alpha\}_{\alpha\in I}).\] We define an $I$-marked
simplicial set $(\Cart^n\times \Delta^{[n_k]_{k\in
K}}_L,\{\cG_\alpha\}_{\alpha\in I})$ by $\cG_\alpha=(\varsigma^n\times
\id)^{-1}\bar \cG_\alpha$. The goal is to show that $G$ admits a right Kan
extension $F\colon\Cart^n\times \Delta^{[n_k]_{k\in K}}_L\to\cC$ along
$\varsigma^n\times \id$ such that $F$ sends squares in
$\cG_\alpha*\cG_\beta$ to squares in $\cT^i_{\alpha\beta}$ for
$\alpha,\beta\in I$, $\alpha\neq \beta$, and, for $i\ge -1$, $F$ sends
squares in $\cG_1*^\cart \cG_2$ to squares in $\cT^{i-1}_{12}$.

Let us first show that there exists a right Kan extension $F$ of $G$ along
$\varsigma^n\times \id$ such that for each vertex $(x,u)$ of $\Cart^n\times
\Delta^{[n_k]_{k\in K}}_L$, the morphism
$G(\pi^n(x),u)=F(\varsigma^n(\pi^n(x)),u)\to F(x,u)$ is in $\cE^i$. We
construct the restriction of $F$ to $\Cart^n_{\varsigma^n(p,0)/}\times
\Delta^{[n_k]_{k\in K}}_L$ by descending induction on $p$. In the case $p=n$,
$\Crt^n_{\varsigma^n(n,0)/}$ is contained in the image of $\varsigma^n$ and
there is nothing to prove. For $0\le p\le n-1$, and $x\in \Crt^n$ satisfying
$\pi^n(x)=(p,q)$, consider the commutative diagram
\begin{equation}\label{5eq:squares}
\xymatrix{\varsigma^n(p,q)\ar[r]\ar[d] & x\ar[r]\ar[d]& \varsigma^n(p,q')\ar[d]\\
\varsigma^n(p+1,q)\ar[r] & x\vee \varsigma^n(p+1,q)\ar[r] & \varsigma^n(p+1,q'),}
\end{equation}
where $q'=\min \{q_0\mid (p,q_0)\in x\}$. The right square is exact. The
vertical (resp.\ horizontal) arrows are in $\cF_1$ (resp.\ $\cF_2$). The
horizontal arrows in the left square are in $\cF_1\cap \cF_2$. By induction
hypothesis, the morphism $G(p+1,q,u)\to F(x\vee \varsigma^n(p+1,q),u)$ is in
$\cE^i$, so that $G(p,q,u)\to F(x\vee \varsigma^n(p+1,q),u)$ is in $\cT_1$,
since $\cT_1$ is stable under composition. Thus, by the assumption
$\cT_1*_\cC\cT_2=\cT_1*_\cC^{\cC_1}\cT_2$, the pullback $F(x\vee
\varsigma^n(p+1,q),u)\times_{G(p+1,q',u)} G(p,q',u)$ exists in $\cC$, which
provides $F(x,u)$ by the proof of Lemma \ref{5le:U7}. The morphism
$G(p,q,u)\to F(x,u)$ is the composition
\[G(p,q,u)\to G(p+1,q,u)\times_{G(p+1,q',u)} G(p,q',u)\to F(x,u),\]
where the first arrow is in $\cE^i$ by the assumption that $G$ carries
$\bar\cG_1*\bar\cG_2$ into $\cT^i_{12}=\cT_1*_\cC^{\cE^i}\cT_2$, and the
second arrow is in $\cE^i$ by the assumption that $\cE^i$ is stable under
pullback by $\cT_1$.

We claim that $F$ sends $\cG_1$ to $\cT_1$, $\cG_2$ to $\cT_2$, and
$\cG_1\cap\cG_2$ to $\cE^i$. Let $e\colon (x,u)\to (y,u)$ be an edge in
$\cG_1\cup \cG_2$, where $x\to y$ in $\cF_1\cup \cF_2$ and $u$ is a vertex of
$\Delta^{[n_k]_{k\in K}}$. We show by induction on $\# x$ that $F(e)\in
\cT_1$ for $e\in \cG_1$, $F(e)\in \cT_2$ for $e\in \cG_2$, and $F(e)\in
\cE^i$ for $\in \cG_1\cap \cG_2$. By Lemma \ref{5le:exact}, any morphism
$x\to y$ in $\Crt^n$ is a composition of a finite sequences of morphisms of
the following classes:
\begin{enumerate}
  \item An exact pullback  of $\omega^n(p,n)\colon \varsigma^n(p,n)\to
      \varsigma^n(p+1,n)$ by $c\in \cF_2$;

  \item An exact pullback of $\omega^n(n,q)\colon \varsigma^n(n,q)\to
      \varsigma^n(n,q+1)$ by $c\in \cF_1$;

  \item An exact pullback of $\omega^n(p,q)\colon \varsigma^n(p,q)\to
      \varsigma^n(p,q)-\{(p,q)\}$ by $c$,
\end{enumerate}
where we have $(p,q)\in [n-1]\times [n-1]$, and $c\colon x'\to y'$ satisfies
$\# x'<\# x$. If $e\in \cG_1$ (resp.\ $e\in\cG_2$), then class (2) (resp.\
(1)) does not appear. Since $\cT_1$, $\cT_2$ and $\cE^i$ are stable under
composition, we may assume that $x\to y$ is in one of the three classes. In
class (1), $\omega^n(p,n)$ is in $\varsigma^n(\bar\cF_1)$, and we conclude
by Lemma \ref{5le:U7} and the assumption that $\cT_1$ is stable under
pullback by $\cT_2$. In class (2), $\omega^n(n,q)$ is in
$\varsigma^n(\bar\cF_2)$, and we conclude by Lemma \ref{5le:U7} and the
assumption that $\cT_2$ is stable under pullback by $\cT_1$. In class (3),
$c$ is a composition of an edge in $\cF_1$ and an edge in $\cF_2$ both
satisfying the induction hypothesis, and we have a diagram
\[\xymatrix{\varsigma^n(p,q)\ar[rd]^{\omega^n(p,q)}\\
&\varsigma^n(p,q)-\{(p,q)\} \ar[r]\ar[d] & \varsigma^n(p,q+1) \ar[d] \\
&\varsigma^n(p+1,q) \ar[r] & \varsigma^n(p+1,q+1) }
\]
with exact square in $\Crt^n$. By Lemma \ref{5le:U7}, the morphism
$F(\omega^n(p,q)\times \id_u)$ can be identified with the induced morphism
\[G(p,q,u)\to G(p+1,q,u)\times_{G(p+1,q+1,u)} G(p,q+1,u),\]
which belongs to $\cE^i$ since $G$ carries $\bar\cG_1*\bar \cG_2$ into
$\cT^i_{12}=\cT_1*_\cC^{\cE^i}\cT_2$. We conclude by the assumption that
$\cE^i$ is stable under pullback by $\cT_1\cup \cT_2$. This finishes the
proof of the claim.

Similarly, applying Condition (3), we see that $F$ carries $\cG_1*\cG_k$
into $\cT_{1k}$ and $\cG_2*\cG_k$ into $\cT_{2k}$ for all $k\in K$.

Next we show that $F$ carries squares in $\cG_k*\cG_l$ into $\cT_{kl}$ for
all $k,l\in K$, $k\neq l$. Consider such a square and let $x\in \Crt^n$ be
its projection. For $x$ in the image of $\varsigma^n$, this follows from the
assumption that $G$ carries $\bar\cG_k*\bar\cG_l$ to $\cT_{kl}$. For the
general case, we proceed by descending induction on $\pi_1(x)$. If
$\pi(x)=(n,q)$, then $x=\varsigma^n(n,q)$ is in the image of $\varsigma^n$.
For $\pi(x)=(p,q)$ with $p<n$, we consider the right square of
\eqref{5eq:squares}. We conclude by Condition (4) and the induction
hypothesis applied to $x\vee \varsigma^n(p+1,q)$.

Finally we show that $F$ carries $\cG_1*\cG_2$ into $\cT^i_{12}$ and carries
$\cG_1*^\cart \cG_2$ into $\cT^{i-1}_{12}$. Every square in
$\cF_1*_{\Cart^n} \cF_2$ of the form \eqref{5eq:square} has a canonical
decomposition
\[\xymatrix{w\ar[rd]\\
&y\wedge z\ar[r]\ar[d] & y\ar[d]\\&z \ar[r] & y\vee z\ar[rd]\\
&&& x,}
\]
where the vertical (resp.\ horizontal) arrows are in $\cF_1$ (resp.\
$\cF_2$), and oblique arrows are in $\cF_1\cap \cF_2$. Note that
$\cF^\cart_{12}$ is the set of squares such that $w=y\wedge z$. Multiplying
by $\id_u$ and applying $F$, we obtain a similar diagram where the inner
square is a pullback by Lemma \ref{5le:U7} and the oblique arrows are in
$\cE^i$ by the previous claim. Since we have already proved that $F$ carries
$\cG_1*\cG_2$ into $\cT_1*_\cC\cT_2$, all we need to show is that the induced
morphism $F(y\wedge z,u)\to F(y,u)\times_{F(x,u)}F(z,u)$ belongs to
$\cE^{i-1}$. However, by Lemma \ref{5le:pullback}, this morphism can be
identified with the left vertical arrow of the pullback square
\[\xymatrix{
F(y,u)\times_{F(x',u)}F(z,u) \ar[r]\ar[d]& F(x',u) \ar[d] \\
F(y,u)\times_{F(x,u)}F(z,u) \ar[r]& F(x',u)\times_{F(x,u)}F(x',u), }
\]
where for brevity we have written $x'$ for $y\vee z$, and the right vertical
arrow is the diagonal of $F(x',u)\to F(x,u)$ and hence belongs to
$\cE^{i-1}$. The lower horizontal arrow is a composition of a pullback of a
morphism in $\cT_1$ by a morphism in $\cT_2$ and a pullback of a morphism in
$\cT_2$ by a morphisms in $\cT_1$. Since $\cE^{i-1}$ is stable under pullback
by $\cT_1\cup \cT_2$, the left vertical arrow belongs to $\cE^{i-1}$ as well.
\end{proof}

The functor $g^n$ in Construction \ref{5cs:YZ} is induced by the map
\begin{equation}\label{5eq:gn}
\delta_2^*\delta^{2+}_* (\Cart^n,\cF)\to \Cart^n,
\end{equation}
which carries a square in $\cF_1*_{\Cart^n} \cF_2$ to its diagonal. We now
construct a family of sections of this map.

\begin{construction}\label{5co:section}
Let $n\geq 0$ be an integer.
\begin{enumerate}
  \item For $x\le y$ in $\Crt^n$ and $(p,q)$ in $[n]\times [n]$, we
      define two elements of $\Crt^n_{x//y}$:
      \[\lambda^n_p(x,y)=(\varsigma^n(\pi^n_1(y)\vee p,0)\vee x)\wedge y,\qquad
      \mu^n_q(x,y)=(\varsigma^n(0,\pi^n_2(y)\vee q)\vee x)\wedge y.
      \]
      These formulas are increasing in $p$, $q$, $x$ and $y$. Moreover, we
      have the following properties:
      \begin{gather}
      \lambda^n_p(x,x)=\mu^n_q(x,x)=x,\label{5eq:lambdamu1}\\
      \pi^n(\lambda^n_p(x,y))=(\pi^n_1(y),\pi^n_2(x)),\quad \pi^n(\mu^n_q(x,y))=(\pi^n_1(x),\pi^n_2(y)).\label{5eq:lambdamu2}
      \end{gather}

  \item We construct a map
      \[\alpha^n\colon A^n\colonequals (\Delta^n\times \Delta^n)^\sharp \times (\Cart^n)^\flat\to (\delta^*_2\delta_*^{2+}(\Cart^n,\cF))^\flat\]
      as follows. For an $m$-simplex $\tau=(\tau_1,\tau_2,\tau_3)\colon
      \Delta^m\to \Delta^n\times \Delta^n\times \Cart^n$, we define
      $\alpha^n(\tau)$ to be the map $\Delta^m\times \Delta^m\to \Cart^n$
      carrying $(a,b)$ to $\lambda^n_{\tau_1(b)}(\tau_3(b),\tau_3(a))$ for
      $a\ge b$, and to $\mu^n_{\tau_2(a)}(\tau_3(a),\tau_3(b))$ for $a\le
      b$. By \eqref{5eq:lambdamu1}, the two definitions coincide for
      $a=b$. By \eqref{5eq:lambdamu2}, $\alpha^n(\tau)$ is an $m$-simplex
      of $\delta^*_2\delta_*^{2+}(\Cart^n,\cF)$. In particular, $\alpha^n$
      carries an edge $(p,q,x)\to (p',q',y)$ of $\Delta^n\times
      \Delta^n\times \Cart^n$ to the square
      \begin{equation}\label{5eq:lambdamusquare}
      \xymatrix{x\ar[r]\ar[d] & \mu^n_q(x,y)\ar[d]\\ \lambda^n_p(x,y)\ar[r] &y}
      \end{equation}
      in $\cF_1*_{\Cart^n}\cF_2$. By \eqref{5eq:lambdamu1}, $\alpha^n$
      carries marked edges of $A^n$ to degenerate edges. The composition
      \[A^n\xto{\alpha^n}(\delta^*_2\delta_*^{2+}(\Cart^n,\cF))^\flat \to (\Cart^n)^\flat,\]
      where the second map is \eqref{5eq:gn}, is the projection.
\end{enumerate}
\end{construction}

\begin{remark}
For an order-preserving map $d\colon[m]\to[n]$, we have identities
\[\Crt(d)(\lambda^m_p(x,y))=\lambda^n_{d(p)}(\Crt(d)(x),\Crt(d)(y)),
\qquad\Crt(d)(\mu^m_q(x,y))=\mu^n_{d(q)}(\Crt(d)(x),\Crt(d)(y)).\] Thus the
maps $\alpha^n$ for different $n$ are compatible with $\Cart(d)$ in the
obvious sense.
\end{remark}

Next we define a restriction of $\alpha^n$, taking values in
$\delta_2^*\delta^{2\square}_* (\Cart^n,\cF^\cart)$.

\begin{construction}\label{5cs:boxplus}
Let $n\geq 0$ be an integer.
\begin{enumerate}
  \item We define order-preserving maps
      \[\xi^n,\eta^n\colon [n]\times[n]\to \Crt^n\] by
      \[\xi^n(p,q)=\varsigma^n(p,0)\wedge \varsigma^n(0,q),\quad \eta^n(p,q)=\varsigma^n(p,n)\wedge\varsigma^n(n,q).\]
      We have $\xi^n(p,q)\le \varsigma^n(p,q)\le \eta^n(p,q)$. We define
      a sublattice of $\Crt^n$ by
      \[\Crt^n_{p,q}\colonequals\Crt^n_{\xi^n(p,q)//\eta^n(p,q)}\] and we put $\boxplus^n_{p,q}\colonequals\N(\Crt^n_{p,q})$. We put
      \[\boxplus^n\colonequals\bigcup_{0\le p, q\le n}\boxplus^n_{p,q}\subseteq \Cart^n.
      \] Note that $\eta^n$ induces an isomorphism of lattices
      $[n]\times [n]\simeq\Crt^n_{n,n}=\Crt^n_{\xi^n(n,n)/}$ via which
      $\pi^n\colon \Crt^n\to[n]\times [n]$ can be identified with the
      morphism of lattices $-\vee \xi^n(n,n)\colon
      \Crt^n\to\Crt^n_{n,n}$.

  \item We define a marked simplicial subset $B^n$ of $A^n$ by
      \[B^n=\bigcup_{x\le y} \N(I_{x,y})^\sharp \times (\Cart^n_{x//y})^\flat \subseteq (\Delta^n\times \Delta^n)^\sharp\times
      (\boxplus^n)^\flat\subseteq A^n.\]  Here $x$ and $y$ run over
      elements of $\Crt^n$ and $I_{x,y}\subseteq[n]\times[n]$ denote the
      full subcategory spanned by pairs $(p,q)$ satisfying
      \begin{equation}\label{5eq:Bn}
      \xi^n(p,q)\le x\le y\le \eta^n(p,q),
      \end{equation}
      or, equivalently, satisfying
      $\Cart^n_{x//y}\subseteq\boxplus^n_{p,q}$. We note that $\eta^n$ is
      a right adjoint of $\pi^n$: $y\le\eta^n(p,q)$ if and only if
      $\pi^n(y)\le (p,q)$.
\end{enumerate}
\end{construction}

We refer the reader to \eqref{5eq:Hasse} for graphic depictions of
$\Crt^n_{p,q}$ for some small values of $n$, $p$, $q$.

\begin{remark}
Let $d\colon [m]\to [n]$ be an order-preserving map. For $0\leq p,q\leq m$,
we have
\begin{align*}
\Crt(d)(\xi^m(p,q))&=\varsigma^n(d(p),d(0))\wedge
\varsigma^n(d(0),d(q))\ge \xi^n(d(p),d(q)),\\
\Crt(d)(\eta^m(p,q))&=\varsigma^n(d(p),d(m))\wedge
\varsigma^n(d(m),d(q))\le \eta^n(d(p),d(q)).
\end{align*}
Thus $\Crt(d)$ induces morphisms of lattices $\Crt^m_{p,q}\to
\Crt^n_{d(p),d(q)}$ and hence maps $\boxplus^m_{p,q}\to
\boxplus^n_{d(p),d(q)}$ and $B^m\to B^n$.
\end{remark}

\begin{lemma}
The map $\alpha^n$ induces a map
\[\beta^n\colon B^n\to(\delta^*_2\delta_*^{2\square}(\Cart^n,\cF^\cart))^\flat.\]
\end{lemma}

\begin{proof}
It suffices to show that for every $m$-simplex $\tau$ of the underlying
simplicial set of $B^n$, the diagram $\alpha^n(\tau)\colon \Delta^m\times
\Delta^m\to \Cart^n$ carries the square spanned by the vertices $(a,b)$,
$(a+1,b)$, $(a,b+1)$, $(a+1,b+1)$ to a pullback. For $a=b$, the assertion
amounts to saying that for every edge $(p,q,x)\le (p',q',y)$ of $B^n$, the
square \eqref{5eq:lambdamusquare} is a pullback. We have
\[\lambda^n_p(x,y)\wedge\mu^n_q(x,y)=(\xi^n(\pi^n(y)\vee (p,q))\vee x)\wedge
y,
\]
which equals $x$ by the assumption $\xi^n(p,q)\le x$. For $a>b$, the
assertion amounts to saying that for every $3$-simplex $(p,q,x)\le
(p',q',y)\le (p'',q'',z)\le (p''',q''',w)$ of $B^n$, the square
\[\xymatrix{\lambda^n_p(x,z)\ar[r]\ar[d] & \lambda^n_{p'}(y,z)\ar[d]\\
\lambda^n_p(x,w)\ar[r] &\lambda^n_{p'}(y,w)}
\]
is a pullback. This is clear since $\lambda^n_p(x,z)=(\varsigma^n(p,0)\vee
x)\wedge z$ by the assumption $\pi_1(z)\le p$ and similarly for the other
vertices of the squares. The case $a<b$ is similar, with $\lambda^n_p$
replaced by $\mu^n_q$.
\end{proof}

\begin{remark}
The proof shows in fact that $\alpha^n$ carries the the simplicial subset
$S\subseteq \Delta^n\times \Delta^n\times\Cart^n$ spanned by those edges
$(p,q,x)\to (p',q',y)$ satisfying \eqref{5eq:Bn} (with no restrictions on
$(p',q')$) into $\delta^*_2\delta_*^{2\square}(\Cart^n,\cF^\cart)$. Note
that $S$ is bigger than the underlying simplicial set of $B^n$ for $n\ge 1$.
\end{remark}

The proof of Theorem \ref{5th:cartesian_gluing} relies on the following
lemmas. The proof of Lemma \ref{5le:cart_inner} will be given in Lemma
\ref{6le:cart_inner}.

\begin{lemma}\label{5le:cart_inner}
The inclusion $\boxplus^n\subseteq\Cart^n$ is a categorical equivalence.
\end{lemma}

\begin{lemma}\label{5le:trivial_co}
The inclusion $B^n\subseteq A^n$ is a trivial cofibration in the category
$\Mset$ for the Cartesian model structure.
\end{lemma}

\begin{proof}
Choose an exhaustion of $\boxplus^n$ by a sequence of simplicial subsets
\[\emptyset=K^0\subseteq K^1\subseteq \dots \subseteq K^N=\boxplus^n\]
such that each $K^i$, $1\le i\le N$ is obtained from $K^{i-1}$ by adjoining a
single nondegenerate simplex $\sigma^i\colon \Delta^{l_i}\to K^i$. This
induces inclusions
\[B^n=L^0\subseteq L^1\subseteq \dots
\subseteq L^N=(\Delta^n\times \Delta^n)^{\sharp}\times(\boxplus^n)^{\flat},
\]
where $L^i=B^n\cup ((\Delta^n\times \Delta^n)^{\sharp}\times(K^i)^{\flat})$.
By Lemma \ref{5le:cart_inner}, $(\boxplus^n)^{\flat}\subseteq
(\Cart^n)^\flat$ is a trivial cofibration in $\Mset$, so that
$(\Delta^n\times \Delta^n)^{\sharp}\times(\boxplus^n)^{\flat}\subseteq A^n$
is a trivial cofibration in $\Mset$ by \cite{Lu1}*{Corollary 3.1.4.3}.
Therefore, it suffices to show that the inclusion $L^{i-1}\subseteq L^i$ is
a trivial cofibration in $\Mset$ for all $1\le i\le N$. However, this
inclusion is a pushout of the map
\[(\Delta^n\times \Delta^n)^\sharp \times (\partial \Delta^{l_i})^\flat\coprod_{\N(I_{x,y})^\sharp\times(\partial \Delta^{l_i})^\flat}
\N(I_{x,y})^{\sharp}\times
(\Delta^{l_i})^\flat\to (\Delta^n\times \Delta^n)^\sharp\times (\Delta^{l_i})^\flat,
\]
where $x=\sigma^i(0)$, $y=\sigma^i(l_i)$. By the assumption that $\sigma^i$
is a simplex of $\boxplus^n$, the partially ordered set $I_{x,y}$ is
nonempty, and admits an initial object $\pi^n(y)$. Thus the inclusion
$\N(I_{x,y})\subseteq \Delta^n\times \Delta^n$ is anodyne. It follows that
the inclusion $\N(I_{x,y})^\sharp\subseteq (\Delta^n\times \Delta^n)^\sharp$
is a trivial cofibration in $\Mset$ (by Remark \ref{3re:Quillen}), and so is
its smash product with $(\partial\Delta^{l_i})^\flat\subseteq
(\Delta^{l_i})^\flat$ by \cite{Lu1}*{Corollary 3.1.4.3}.
\end{proof}

\begin{proof}[Proof of Theorem \ref{5th:cartesian_gluing}]
We adopt the notation of Lemma \ref{5le:restriction}. By the first part of
Condition (2), we have $\cE=\bigcup_{i\ge -2} \cE^i$, $\cT'=\bigcup_{i\ge
-2}\cT^i$, and
\[W_\infty\colonequals \delta^*_{\{1,2\}\amalg K,L}\delta_*^{(\{1,2\}\amalg
K)\square} (\cC,\cT')=\bigcup_{i\ge -2} W_i,
\]
where $W_i=\delta^*_{\{1,2\}\amalg K,L}\delta_*^{(\{1,2\}\amalg K)\square}
(\cC,\cT^i)$. Since $\cE^{-2}$ is the set of equivalences of $\cC$, we have
$\cT^{-2}=\cT$. Thus, the map $\iota$ in question is the transfinite
composition of inclusions
\[W_{-2}\to W_{-1}\to \dots \to W_i\to \dots \to W_\infty.\]
Since the Joyal model structure on $\Sset$ is combinatorial, the trivial
cofibrations form a weakly saturated class \cite{Lu1}*{Definition A.1.2.2}.
Thus it suffices to show that each inclusion $W_{-2}\to W_i$ is a categorical
equivalence for every integer $i\geq -1$. By Lemma
\ref{1le:categorical_equivalence} and induction, it suffices to show that for
every $i \ge -1$ and every commutative diagram
\[\xymatrix{W_{-2}\ar[r]^{f'} &W_{i-1}\ar[r]^-v\ar[d]_f & \Fun(\Delta^l,\cD)\ar[d]^p\\
&W_i\ar[r]^-w \ar@{..>}[ur]^-{u} & \Fun(\partial \Delta^l,\cD)}
\]
where $f$ and $f'$ are inclusions and $p$ is induced by the inclusion
$\partial \Delta^l\subseteq \Delta^l$, there exists a map $u\colon W_i\to
\Fun(\Delta^l,\cD)$ satisfying $p\circ u=w$ such that $u\circ f\circ f'$ and
$v\circ f'$ are homotopic over $\Fun(\partial \Delta^l,\cD)$. The proof is
mostly parallel to the proof of Theorem \ref{4th:multisimplicial_descent}.

Let $\sigma$ be an $n$-simplex of $W_i$, corresponding to a map $\tau\colon
\Delta^{n,n,n_k\res k\in K}_L\to \delta^{(\{1,2\}\amalg
K)\square}_*(\cC,\cT^i)$, where $n_k=n$. We consider the maps
\begin{align*}
w_* g(\tau)&\colon \Kart(\tau)\to \Fun(\delta_2^*\delta^{2+}_*(\Cart^n,\cF)\times \Delta^{[n_k]_{k\in K}},\Fun(\partial \Delta^l,\cD)),\\
v_* h(\tau)&\colon \Kart(\tau)\to \Fun(\delta_2^*\delta^{2\square}_*(\Cart^n,\cF^\cart)\times \Delta^{[n_k]_{k\in K}},\Fun(\Delta^l,\cD)),
\end{align*}
compositions of the maps $g(\tau)$ and $h(\tau)$ defined after the statement
of Lemma \ref{5le:restriction} and the maps induced by $w$ and $v$,
respectively. Since $\Kart(\tau)$ is a contractible Kan complex, the maps
$w_* g(\tau)$ and $v_*h(\tau)$ factorize through
\begin{align*}
w_* g(\tau)&\colon \Kart(\tau)\to \Map^\sharp((\partial \Delta^l)^\flat\times
(\delta_2^*\delta^{2+}_*(\Cart^n,\cF))^\flat\times (\Delta^{[n_k]_{k\in K}})^\flat,\cD^\natural),\\
v_* h(\tau)&\colon \Kart(\tau)\to \Map^\sharp((\Delta^l)^\flat\times
(\delta_2^*\delta^{2\square}_*(\Cart^n,\cF^\cart))^\flat\times(\Delta^{[n_k]_{k\in K}})^\flat,\cD^\natural),
\end{align*}
respectively. Composing with $\beta^n$ and $\alpha^n$, respectively, we
obtain maps
\begin{align*}
\psi(\tau)&\colon \Kart(\tau)\to\Map^\sharp((\partial \Delta^l)^\flat\times A^n\times (\Delta^{[n_k]_{k\in
K}})^\flat,\cD^\natural),\\
\phi(\tau)&\colon \Kart(\tau)\to\Map^\sharp((\Delta^l)^\flat\times B^n\times
(\Delta^{[n_k]_{k\in K}})^\flat,\cD^\natural).
\end{align*}

Consider the commutative diagram
\begin{align}\label{5eq:descent}
\xymatrix{\cN(\sigma)\ar[d]\ar[r]&\Kart(\tau)\ar[d]^-{h} \\
\Map^\sharp((\Delta^l)^\flat\times A^n\times(\Delta^{[n_k]_{k\in K}})^\flat,\cD^\natural)
\ar[r]^-{\RES_1}\ar[dd]^-{\RES_2} &\Map^\sharp(H\times(\Delta^{[n_k]_{k\in K}})^\flat,\cD^\natural)\ar[d]\\
&\Map^\sharp((\partial \Delta^l)^\flat\times A^n\times (\Delta^{[n_k]_{k\in K}})^\flat,\cD^\natural)\ar[d]^-{\RES_2} \\
\Map^\sharp((\Delta^l)^\flat\times (\Delta^n)^\flat,\cD^\natural)\ar[r]
&\Map^\sharp((\partial\Delta^l)^\flat\times(\Delta^n)^\flat,\cD^\natural).}
\end{align}
In the above diagram,
\begin{itemize}
  \item $\RES_1$ is induce by
     \[j\colon H=(\Delta^l)^\flat\times B^n\coprod_{(\partial
     \Delta^l)^\flat \times B^n } (\partial \Delta^l)^\flat\times A^n \hookrightarrow (\Delta^l)^\flat \times A^n;\]

  \item $h$ is the amalgamation of $\phi(\tau)$ and $\psi(\tau)$;

  \item $\cN(\sigma)$ is defined so that the upper square is a pullback
      square;

  \item the two maps $\RES_2$ are both induced by the composite embedding
      \[\Delta^n\xrightarrow{\r{diag}} \Delta^n\times \Delta^n\times \Delta^{n}\times \Delta^n\times
      \Delta^{[n_k]_{k\in K}} \xrightarrow{\id_{\Delta^n\times \Delta^n}\times \varsigma^n\times \id_{\Delta^{[n_k]_{k\in K}}}}
      \Delta^n\times \Delta^n\times \Cart^n\times \Delta^{[n_k]_{k\in K}};\]

  \item the unmarked arrows in the lower square are obvious restrictions.
\end{itemize}

By Lemma \ref{5le:trivial_co} and \cite{Lu1}*{Corollary 3.1.4.3}, the map
$j\times\id_{(\Delta^{[n_k]_{k\in K}})^\flat}$ is a trivial cofibration in
$\Mset$ and consequently $\RES_1$ is a trivial Kan fibration. Thus
$\cN(\sigma)$ is a contractible Kan complex.

We let $\Phi(\sigma)\colon \cN(\sigma)\to \Map^\sharp((\Delta^n)^\flat,
\Fun(\Delta^l,\cD)^\natural)$ denote the composition of the vertical arrows
in the first column of \eqref{5eq:descent}. This construction is functorial
in $\sigma$, giving rise to a morphism $\Phi\colon \cN\to
\Map[W_i,\Fun(\Delta^l,\cD)]$ in $(\Sset)^{(\del_{/W_i})^{op}}$. The
composition of the vertical arrows in the second column of
\eqref{5eq:descent} is constant of value $w(\sigma)$. Thus $\Map[W_i,p]\circ
\Phi$ factors through the morphism $\Delta^0_{(\del_{/W_i})^{op}}$
corresponding to $w$ via Remark \ref{2re:functors}.

Let $\sigma'$ be an $n$-simplex of $W_{-2}$ corresponding to a map
$\tau'\colon\Delta^{n,n,n_k\res k\in K}_L\to \delta^{(\{1,2\}\amalg
K)\square}_*(\cC,\cT^{-2})$. The composition
\[\Cart^n\times\Delta^{[n_k]_{k\in K}}_L\xrightarrow{\pi^n\times\id}
\Delta^n\times\Delta^n\times\Delta^{[n_k]_{k\in
K}}_L\xrightarrow{\tau'}\cC
\]
is a vertex of $\Kart(\tau')$ by Lemma \ref{5le:kan} and the equality
$\cT^{-2}_{12}=\cT_1*_\cC^\cart \cT_2$. This vertex, together with the
composition
\[\Delta^n\times \Delta^n\times \Cart^n\times \Delta^{[n_k]_{k\in K}}\to \Cart^n\times
\Delta^{[n_k]_{k\in K}} \xto{\pi^n\times \id}
\Delta^n\times \Delta^n\times\Delta^{[n_k]_{k\in K}}\to \Fun(\Delta^l,\cD),
\]
where the first map is the projection and the last map corresponds to the
composition
\[\Delta^{n,n,n_k\res k\in K}\xto{\tau'} \op^{\{1,2\}\amalg
K}_L\delta^{(\{1,2\}\amalg
K)\square}_*(\cC,\cT^{-2})\xto{v\circ f'}\delta_*^{\{1,2\}\amalg
K}\Fun(\Delta^l,\cD),
\]
provides a vertex of $\cN(f(f'(\sigma')))$, whose image under
$\Phi(f(f'(\sigma')))$ is $v(f'(\sigma'))$. This construction is functorial
in $\sigma'$, giving rise to $\nu\in \Gamma((f\circ f')^* \cN)_0$ satisfying
$(f\circ f')^*\Phi\circ \nu=v\circ f'$.  Applying Proposition
\ref{1pr:extension} to $\Phi$, the map $f\circ f'$, and the global section
$\nu$, we obtain a map $u\colon W_i\to \Fun(\Delta^l,\cD)$ satisfying
$p\circ u=w$ such that $u\circ f\circ f'$ and $v\circ f'$ are homotopic over
$\Fun(\partial \Delta^l,\cD)$, as desired.
\end{proof}

\begin{remark}
As a special case of Theorem \ref{5th:cartesian_gluing}, the inclusion
\[\delta^*_2\delta_*^{2\square}(\Cart^n,\cF^\cart)\subseteq \delta^*_2\delta_*^{2+}(\Cart^n,\cF)\]
is a categorical equivalence. If we have a direct proof of this special
case, Construction \ref{5co:section} through Lemma \ref{5le:trivial_co} are
not necessary and the proof of Theorem \ref{5th:cartesian_gluing} can be
achieved with $\psi(\tau)$ and $\phi(\tau)$ replaced by $g(\tau)$ and
$h(\tau)$, respectively.
\end{remark}

\section{Some trivial cofibrations}
\label{6ss}

In this section, we prove that certain inclusions of simplicial sets defined
in combinatorial manners are inner anodyne or categorical equivalences. In
particular, they are trivial cofibrations in $\Sset$ for the Joyal model
structure \cite{Lu1}*{Theorem 2.2.5.1}. Results of this section are used in
Sections \ref{4ss} and \ref{5ss}.

We let $\star$  denote \emph{joins} of categories and simplicial sets
\cite{Lu1}*{Section 1.2.8}.

\begin{lemma}\label{6le:join}
Let $A_0\subseteq A$, $B_0\subseteq B$, $C_0\subseteq C$ be inclusions of
simplicial sets. If $A_0\subseteq A$ is right anodyne and $C_0\subseteq C$ is
left anodyne \cite{Lu1}*{Definition 2.0.0.3}, then the induced inclusion
\[A\star B_0 \star C\coprod_{A_0\star B_0\star C_0} A_0\star B\star C_0 \subseteq A\star B\star C\]
is inner anodyne.
\end{lemma}

\begin{proof}
Consider the commutative diagram of inclusions with pushout squares
\[\xymatrix{A_0\star B_0\star C_0\ar[r]\ar[d] & A_0\star B\star C_0\ar[rd]\ar[d]\\
A\star B_0\star C_0\ar[r]\ar[d] &A\star B_0\star C_0 \coprod_{A_0\star B_0 \star C_0} A_0\star B\star C_0\ar[r]^-f\ar[d]
& A\star B\star C_0\ar[d]\ar[rd]\\
A\star B_0\star C\ar[r] & A\star B_0 \star C\coprod_{A_0\star B_0\star C_0} A_0\star B\star C_0\ar[r]^-{f'}
& A\star B_0\star C\coprod_{A\star B_0\star C_0} A\star B\star C_0\ar[r]^-g
& A\star B\star C.}
\]
By \cite{Lu1}*{Lemma 2.1.2.3}, $f$ is inner anodyne since $A_0\subseteq A$
is right anodyne; $g$ is inner anodyne since $C_0\subseteq C$ is left
anodyne. It follows that $g\circ f'$ is inner anodyne.
\end{proof}

\begin{lemma}\label{6le:cut}
Let $S$ be a partially ordered set and let $Q=[2] \subseteq S$,
$R=S-\{1\}\subseteq S$ be full inclusions. Assume that $0$ is a final object
of $R_{/1}$ and $2$ is an initial object of $R_{1/}$. Then the inclusion
$\N(Q)\cup\N(R)\subseteq \N(S)$ is inner anodyne.
\end{lemma}

\begin{proof}
Consider the commutative diagram of inclusions
\[\xymatrix{\N(Q\cap R)\ar[r]\ar[d] & \N(Q)\ar[d]\ar[rd]\\
\N(R_{/1}\star R_{1/})\ar[r]\ar[d]&\N(R_{/1}\star R_{1/})\coprod_{\N(\{0\}\star
\{2\})}
\N([2])\ar[r]^-f\ar[d]&
\N(R_{/1}\star \{1\}\star R_{1/})\ar[d]\\
\N(R)\ar[r]&\N(R)\cup \N(Q)\ar[r]^g& \N(S)}
\]
in which the square on the left are clearly pushouts. Note that for any
simplex $\sigma$ of $N(S)$, if $\sigma$ is not a simplex of $\N(R)$, then
$1$ is a vertex of $\sigma$, so that $\sigma$ is a simplex of
$\N(R_{/1}\star \{1\}\star R_{1/})$. Thus the lower outer square is a
pushout. It follows that $g$ is a pushout of $f$. By assumption and
\cite{Lu1}*{Lemma 4.2.3.6}, $\N(\{0\})\subseteq \N(R_{/1})$ is right anodyne
and $\N(\{2\})\subseteq \N(R_{1/})$ is left anodyne. It follows that $f$ is
inner anodyne by Lemma \ref{6le:join}. Therefore, $g$ is inner anodyne.
\end{proof}

\begin{remark}\label{6re:order}
Let $P\subseteq Q$ and $P\subseteq R$ be full inclusions of partially
ordered sets. The pushout $S=Q\coprod_P R$ in the category of partially
ordered sets admits the following description. The underlying set of $S$ is
the set-theoretic pushout. The partial order on $S$ is uniquely
characterized by the following properties:
\begin{enumerate}
  \item $Q\subseteq S$ and $R\subseteq S$ are full inclusions; and

  \item for $q\in Q$, $r\in R$, we have $q\le r$ (resp.\ $q\ge r$) if and
      only if there exists $p\in P$ satisfying $q\le p\le r$ (resp.\
      $q\ge p\ge r$).
\end{enumerate}
\end{remark}

\begin{lemma}\label{6le:union}
Let $P\subseteq Q$ and $P\subseteq R$ be full inclusions of partially
ordered sets and $S=Q\coprod_P R$ the pushout in the category of partially
ordered sets. Suppose that the following conditions are satisfied:
\begin{enumerate}
  \item $Q$ admits pushouts and pushouts are preserved by the inclusion
      $Q\subseteq S$.

  \item $Q-P$ is finite.

  \item $P$ is an up-set of $Q$, that is, a subset such that $p\in P$ and
      $q\geq p$ with $q\in Q$ imply $q\in P$.
\end{enumerate}
Then the inclusion $\N(Q)\cup \N(R)\subseteq \N(S)$ is inner anodyne.
\end{lemma}

\begin{proof}
We proceed by induction on $n=\# (Q-P)$. For $n=0$, we have $R=S$ and the
assertion is trivial. For $n=1$, put $Q-P=\{q\}$. Then Condition (3) means
that $q$ is a minimal element of $Q$, hence of $S$. Note that
$\N(R)\cup\N(S_{q/})=\N(S)$. Indeed, for any simplex $\sigma$ of $\N(S)$, if
$\sigma$ is a simplex of $\N(R)$, then $q$ is a vertex of $\sigma$, so that
$\sigma$ is a simplex of $\N(S_{q/})$. Thus the inclusion $\N(Q)\cup
\N(R)\subseteq \N(S)$ is a pushout of the inclusion $\N(Q_{q/})\cup
\N(R_{q/})\subseteq \N(S_{q/})$. The latter is isomorphic to the inclusion
\begin{equation}\label{7eq:preinner}
\N(P_{q/})^{\triangleleft}\coprod_{\N(P_{q/})}\N(R_{q/})\subseteq\N(R_{q/})^{\triangleleft}.
\end{equation}
By Condition (1), for every $r\in R_{q/}$, the partially ordered set
$P_{q//r}$ is filtered. Indeed, for $p,p'\in P_{q//r}$, the pushout $p\vee_q
p'$ is a common upper bound in $P_{q//r}$. Thus $\N(P_{q//r})$ is weakly
contractible by \cite{Lu1}*{Theorem 5.3.1.13, Lemma 5.3.1.18}. It follows
that $\N(P_{q/})^{op}\subseteq \N(R_{q/})^{op}$ is cofinal by
\cite{Lu1}*{Theorem 4.1.3.1}, thus right anodyne by \cite{Lu1}*{Proposition
4.1.1.3 (4)}. Therefore, \eqref{7eq:preinner} is inner anodyne by
\cite{Lu1}*{Lemma 2.1.2.3}.

For $n\ge 2$, we choose a minimal element $q$ of $Q-P$. Then Condition (3)
implies that $q$ is a minimal element of $Q$, hence of $S$. Put $S'=
S-\{q\}\supseteq R$ and $Q'=Q-\{q\}=S'\cap Q$. Consider the diagram of
inclusions with pushout square
\[\xymatrix{\N(Q')\cup \N(R)\ar[r]^-f\ar[d] & \N(S')\ar[d]\\
\N(Q)\cup \N(R)\ar[r]& \N(Q)\cup \N(S')\ar[r]^-g & \N(S).}
\]

By the induction hypothesis applied to the inclusions $P\subseteq Q'$ and
$P\subseteq R$, we know that $f$ is inner anodyne. Indeed, we have $P=Q'\cap
R$ and $S'$ is the pushout $Q'\coprod_P R$ in the category of partially
ordered set, by the description in Remark \ref{6re:order}. Condition (1)
holds since $q$ is a minimal element of $Q$, the partially ordered set $Q'$
admits pushouts and pushouts are stable under the inclusion $Q'\subseteq Q$,
hence under the inclusions $Q'\subseteq S$ and $Q'\subseteq S'$; for
Condition (2), we have $\#(Q'-P)=n-1$; and for Condition (3), $P$ is an
up-set of $Q$, hence of $Q'$.

By the induction hypothesis applied to the inclusions $Q'\subseteq Q$ and
$Q'\subseteq S'$, we know that $g$ is inner anodyne as well. Indeed, we have
$Q'=Q\cap S'$ and $S$ is the pushout $Q\coprod_{Q'} S$ in the category of
partially ordered sets; Condition (1) remains unchanged; for Condition (2),
we have $\#(Q-Q')=1$; and for Condition (3), $Q'$ is an up-set of $Q$ since
$q$ is minimal.

Therefore, the inclusion $\N(Q)\cup \N(R)\subseteq \N(S)$ is inner anodyne.
\end{proof}

\begin{lemma}\label{6le:lattice}
Let $P$ be a finite partially ordered set admitting pushouts and let
$p_0\le\dots\le p_s$; $q_0\le\dots\le q_s$ be elements of $P$ such that
$p_i\le q_{i-1}$ for $1\le i\le s$. Then the inclusion
\begin{align*}
\bigcup_{i=0}^s\N(P_{p_i//q_i})
\subseteq\N\left(\bigcup_{i=0}^{s}P_{p_i//q_i}\right)
\end{align*}
is inner anodyne.
\end{lemma}

\begin{proof}
Put $P_i=P_{p_i//q_i}$. The inclusion can be decomposed as $Q_0\subseteq
\dots \subseteq Q_n$, where
\[Q_j=\N\left(\bigcup_{i=0}^jP_i\right)\cup
\bigcup_{i=j+1}^{n}\N(P_i).
\]
For $1\le j\le n$, the inclusion $Q_{j-1}\subseteq Q_j$ is a pushout of
\begin{equation}\label{6eq:lattice}
\N\left(\bigcup_{i=0}^{j-1}P_i\right)\cup \N(P_j)\subseteq \N\left(\bigcup_{i=0}^jP_i\right).
\end{equation}
Indeed, for $k>j$, we have $P_k\cap \left(\bigcup_{i=0}^j P_i\right)\subseteq
P_j$. It then suffices to check that \eqref{6eq:lattice} satisfies the
assumptions of Lemma \ref{6le:union}. We denote coproducts in $P_{p_0/}$ by
$\vee$. Take $x\in A=\bigcup_{i=0}^{j-1}P_i$ and $y\in P_j$. If $x\ge y$,
then $x,y\in P_{p_j//q_{j-1}}=P_{j-1}\cap P_j$. If $x\le y$, then $x\le x\vee
p_j\le y$, where $x\vee p_j\in P_{j-1}\cap P_j$ by the assumption that
$p_j\le q_{j-1}$. Thus $\bigcup_{i=0}^jP_i$ is the pushout $A\coprod_{A\cap
P_j} P_j$ in the category of partially ordered sets, by Remark
\ref{6re:order}. Condition (1) of Lemma \ref{6le:union} follows from the fact
that for $x\in P_i$, $y\in P_{i'}$, we have $x\vee y\in P_{\max\{i,i'\}}$.
Condition (2) is clear. For Condition (3), it suffices to note that $A\cap
P_j=A_{p_j/}$.
\end{proof}

By an \emph{interval sublattice} of a finite lattice $P$, we mean a subset
of the form $P_{p//q}$, where $p\le q$ are in $P$.

\begin{lemma}\label{6le:equivalence}
Let $P$ be a finite lattice and let $p_0\le\dots\le p_s\le q_0\le\dots\le
q_s$ be elements of $P$ satisfying $\bigcup_{i=0}^s P_{p_i//q_i}=P$. Let
$Q_1,\dots,Q_t$ be interval sublattices of $P$. Then the inclusion
\[\bigcup_{i=0}^s\N(P_{p_i//q_i})\cup \bigcup_{j=1}^t \N(Q_j)\subseteq \N(P)\]
is a categorical equivalence.
\end{lemma}

Note that the assumptions imply that $p_0$ is the minimum of $P$ and $q_s$
is the maximum of $P$.

\begin{proof}
We proceed by induction on $t$.  Put $P_i=P_{p_i//q_i}$ and
$R_j=\bigcup_{i=0}^s\N(P_i)\cup \bigcup_{k=1}^j \N(Q_k)$. We need to show
that $R_t\subseteq N(P)$ is a categorical equivalence. By Lemma
\ref{6le:lattice}, the inclusion $R_0=\bigcup_{i=0}^s \N(P_i)\subseteq
\N(P)$ is inner anodyne, thus a categorical equivalence \cite{Lu1}*{Lemma
2.2.5.2}. Thus for $t=0$ we are done. For $t\ge 1$, it suffices to show that
the inclusions $R_0\subseteq \dots \subseteq R_t$ are categorical
equivalences. For $1\le j\le t$, the inclusion $R_{j-1}\subseteq R_j$ is a
pushout of
\begin{equation}\label{8eq:cateq}
\bigcup_{i=0}^s\N(P_i\cap Q_j)\cup \bigcup_{k=1}^{j-1}\N(Q_k\cap Q_j)\subseteq \N(Q_j)
\end{equation}
by an inclusion. By \cite{Lu1}*{Lemma A.2.4.3}, it suffices to show that
\eqref{8eq:cateq} is a categorical equivalence, which follows from the
induction hypothesis. In fact, if we write $Q_j=P_{p//q}$, then $P_i\cap
Q_j=P_{p_i\vee p//q_i\wedge q}$, and for $0\le i,i'\le s$ such that $P_i\cap
Q_j\neq \emptyset$, $P_{i'}\cap Q_j\neq \emptyset$, we have $p_i\vee p\le
q_{i'}\wedge q$.
\end{proof}

Now we prove Lemmas \ref{4le:cpt_inner} and \ref{5le:cart_inner}.

\begin{lemma}\label{6le:cpt_inner}
The inclusion $\Box^n\subseteq\CCpt^n$ is inner anodyne.
\end{lemma}

\begin{proof}
We apply Lemma \ref{6le:lattice} to the lattice $P=[n]\times [n]$, with
$s=n$, $p_i=(0,i)$ and $q_i=(i,n)$. We have $p_0\le \dots \le p_n=q_0\le
\dots \le q_n$. Thus
\begin{align*}
\Box^n&=\bigcup_{i=0}^n\N\left(\RCpt^n_{(0,i)//(i,n)}\right)
\subseteq\N\left(\bigcup_{i=0}^n\RCpt^n_{(0,i)//(i,n)}\right)=\N(\RCpt^n)=\CCpt^n
\end{align*}
is inner anodyne.
\end{proof}

\begin{lemma}\label{6le:cart_inner}
The inclusion $\bigcup_{0\le p\le n} \boxplus^n_{p,n}\subseteq \Cart^n$ is
inner anodyne and the inclusion $\boxplus^n \subseteq \Cart^n$ is a
categorical equivalence.
\end{lemma}

\begin{proof}
We apply Lemmas \ref{6le:lattice} and \ref{6le:equivalence} to the lattice
$P=\Crt^n$, with $s=n$, $p_i=\xi^n(i,n)$, $q_i=\eta^n(i,n)$, and the $Q_j$'s
given by $\Crt^n_{p,q}$ with $0\le p\le n$ and $0\le q<n$. We have
$\xi^n(0,n)\le \dots \le\xi^n(n,n)\le \eta^n(0,n)\le\dots\le \eta^n(n,n)$.
It remains to show $\Crt^n=\bigcup_{p=0}^n \Crt^n_{p,n}$. For $Q\in \Crt^n$,
we let $p$ denote $\pi^n_1(Q)=\min_{(p',q')\in Q} p'$. Then we have
\[\xi^n(p,n)\le \varsigma^n(p,0)\le Q\le \varsigma^n(p,n)=\eta^n(p,n),\]
so that $Q\in \Crt^n_{p,n}$.
\end{proof}

\begin{bibdiv}
\begin{biblist}

\bib{Ayoub}{article}{
   author={Ayoub, Joseph},
   title={Les six op\'erations de Grothendieck et le formalisme des cycles
   \'evanescents dans le monde motivique. I},
   journal={Ast\'erisque},
   number={314},
   date={2007},
   pages={x+466 pp.},
   issn={0303-1179},
   isbn={978-2-85629-244-0},
   review={\MR{2423375 (2009h:14032)}},
}

\bib{BV}{book}{
   author={Boardman, J. M.},
   author={Vogt, R. M.},
   title={Homotopy invariant algebraic structures on topological spaces},
   series={Lecture Notes in Mathematics, Vol. 347},
   publisher={Springer-Verlag},
   place={Berlin},
   date={1973},
   pages={x+257},
   review={\MR{0420609 (54 \#8623a)}},
}

\bib{Conrad}{article}{
   author={Conrad, Brian},
   title={Deligne's notes on Nagata compactifications},
   journal={J. Ramanujan Math. Soc.},
   volume={22},
   date={2007},
   number={3},
   pages={205--257},
   issn={0970-1249},
   review={\MR{2356346 (2009d:14002)}},
}

\bib{CD}{article}{
    author={Cisinski, Denis-Charles},
    author={D\'eglise, Fr\'ed\'eric},
    title={Triangulated categories of mixed motives},
    note={\href{http://arxiv.org/abs/0912.2110}{arXiv:0912.2110},
    preprint},
}

\bib{DP}{book}{
   author={Davey, B. A.},
   author={Priestley, H. A.},
   title={Introduction to lattices and order},
   edition={2},
   publisher={Cambridge University Press},
   place={New York},
   date={2002},
   pages={xii+298},
   isbn={0-521-78451-4},
   review={\MR{1902334 (2003e:06001)}},
}

\bib{SGA4XVII}{article}{
   author={Deligne, Pierre},
   title={Cohomologie \`a supports propres},
   part={Expos\'e XVII},
   book={
      title={Th\'eorie des topos et cohomologie \'etale des sch\'emas, Tome
      3},
      series={Lecture Notes in Mathematics},
      volume={305},
      publisher={Springer-Verlag},
      place={Berlin},
      note={S\'eminaire de G\'eom\'etrie Alg\'ebrique du Bois-Marie 1963--1964
   (SGA 4), dirig\'e par M. Artin, A. Grothendieck, et J. L. Verdier},
      year={1973},
      review={\MR{0354654 (50 \#7132)}},
   },
   pages={250--480},
}

\bib{FP}{article}{
   author={Fiore, Thomas M.},
   author={Paoli, Simona},
   title={A Thomason model structure on the category of small $n$-fold
   categories},
   journal={Algebr. Geom. Topol.},
   volume={10},
   date={2010},
   number={4},
   pages={1933--2008},
   issn={1472-2747},
   review={\MR{2728481 (2011i:18011)}},
   doi={10.2140/agt.2010.10.1933},
}

\bib{Gait}{article}{
   author={Gaitsgory, Dennis},
   title={ind-coherent sheaves},
   language={English, with English and Russian summaries},
   journal={Mosc. Math. J.},
   volume={13},
   date={2013},
   number={3},
   pages={399--528, 553},
   issn={1609-3321},
   review={\MR{3136100}},
}

\bib{GZ}{book}{
   author={Gabriel, P.},
   author={Zisman, M.},
   title={Calculus of fractions and homotopy theory},
   series={Ergebnisse der Mathematik und ihrer Grenzgebiete, Band 35},
   publisher={Springer-Verlag New York, Inc., New York},
   date={1967},
   pages={x+168},
   review={\MR{0210125 (35 \#1019)}},
}

\bib{Groth}{article}{
    author={Groth, M.},
    title={A short course on $\infty$-categories},
    note={\href{http://arxiv.org/abs/1007.2925}{arXiv:1007.2925}, preprint},
}

\bib{EGAIn}{book}{
   author={Grothendieck, A.},
   author={Dieudonn\'e, J. A.},
   title={\'El\'ements de g\'eom\'etrie alg\'ebrique. I},
   series={Grundlehren der Mathematischen Wissenschaften},
   volume={166},
   date={1971},
   publisher={Springer-Verlag},
   place={Berlin},
}

\bib{Hovey}{book}{
   author={Hovey, Mark},
   title={Model categories},
   series={Mathematical Surveys and Monographs},
   volume={63},
   publisher={American Mathematical Society, Providence, RI},
   date={1999},
   pages={xii+209},
   isbn={0-8218-1359-5},
   review={\MR{1650134 (99h:55031)}},
}

\bib{Joyal0}{article}{
   author={Joyal, Andr\'e},
   title={Quasi-categories and Kan complexes},
   note={Special volume celebrating the 70th birthday of Professor Max
   Kelly},
   journal={J. Pure Appl. Algebra},
   volume={175},
   date={2002},
   number={1-3},
   pages={207--222},
   issn={0022-4049},
   review={\MR{1935979 (2003h:55026)}},
   doi={10.1016/S0022-4049(02)00135-4},
}

\bib{Joyal}{article}{
    author={Joyal, Andr\'e},
    title={Notes on quasi-categories},
    eprint={http://www.math.uchicago.edu/~may/IMA/JOYAL/JoyalDec08.pdf},
    note={Preprint},
}

\bib{JT}{article}{
    author={Joyal, Andr\'e},
    author={Tierney, Myles},
    title={Notes on simplicial homotopy theory},
    note={Preprint available at \url{http://mat.uab.cat/~kock/crm/hocat/advanced-course/Quadern47.pdf}},
}

\bib{LO1}{article}{
   author={Laszlo, Yves},
   author={Olsson, Martin},
   title={The six operations for sheaves on Artin stacks. I. Finite
   coefficients},
   journal={Publ. Math. Inst. Hautes \'Etudes Sci.},
   number={107},
   date={2008},
   pages={109--168},
   issn={0073-8301},
   review={\MR{2434692 (2009f:14003a)}},
   doi={10.1007/s10240-008-0011-6},
}

\bib{LO2}{article}{
   author={Laszlo, Yves},
   author={Olsson, Martin},
   title={The six operations for sheaves on Artin stacks. II. Adic coefficients},
   journal={Publ. Math. Inst. Hautes \'Etudes Sci.},
   number={107},
   date={2008},
   pages={169--210},
   issn={0073-8301},
   review={\MR{2434693 (2009f:14003b)}},
   doi={10.1007/s10240-008-0012-5},
}

\bib{LZ1}{article}{
    author={Liu, Y.},
    author={Zheng, W.},
    title={Enhanced six operations and base change theorem for higher Artin stacks},
    note={\href{http://arxiv.org/abs/1211.5948}{arXiv:1211.5948},
    preprint},
}

\bib{LZ2}{article}{
    author={Liu, Y.},
    author={Zheng, W.},
    title={Enhanced adic formalism
    and perverse $t$-structures for higher Artin stacks},
    note={\href{http://arxiv.org/abs/1404.1128}{arXiv:1404.1128},
    preprint},
}

\bib{Lu1}{book}{
   author={Lurie, Jacob},
   title={Higher topos theory},
   series={Annals of Mathematics Studies},
   volume={170},
   publisher={Princeton University Press},
   place={Princeton, NJ},
   date={2009},
   pages={xviii+925},
   isbn={978-0-691-14049-0},
   isbn={0-691-14049-9},
   review={\MR{2522659 (2010j:18001)}},
}

\bib{Lu2}{book}{
    author={Lurie, Jacob},
    title={Higher algebra},
    date={September 14, 2014},
    note={Preprint available at \url{http://www.math.harvard.edu/~lurie/}},
}

\bib{Robalo}{thesis}{
    author={Robalo, Marco},
    title={Motivic homotopy theory of non-commutative spaces},
    date={2014},
    note={Th\`ese de doctorat, Universit\'e Montpellier 2},
}

\bib{Zh}{article}{
   author={Zheng, Weizhe},
   title={Six operations and Lefschetz-Verdier formula for Deligne-Mumford
   stacks},
   journal={Sci. China Math.},
   volume={58},
   date={2015},
   number={3},
   pages={565--632},
   issn={1674-7283},
   review={\MR{3319927}},
   doi={10.1007/s11425-015-4970-z},
}

\bib{Zh1}{article}{
    author={Zheng, Weizhe},
    title={Gluing pseudofunctors via $n$-fold categories},
    note={\href{http://arxiv.org/abs/1211.1877}{arXiv:1211.1877},
    preprint},
}

\end{biblist}
\end{bibdiv}

\end{document}